\documentclass{amsart}

\usepackage[dvipsnames]{xcolor}
\usepackage{amsmath}
\usepackage{enumerate}
\usepackage{enumitem}
\usepackage{amssymb}
\usepackage{extarrows}
\usepackage{amsthm}
\usepackage{thmtools}
\usepackage{mathrsfs}
\usepackage{tikz-cd}
\usepackage[colorlinks, linkcolor = SeaGreen, citecolor = SeaGreen, urlcolor = SeaGreen]{hyperref}

\hypersetup{colorlinks = true}

\newtheorem{thm}{Theorem}[section]

\newtheorem{lem}[thm]{Lemma}
\newtheorem{coro}[thm]{Corollary}
\newtheorem{prop}[thm]{Proposition}
\theoremstyle{definition}

\theoremstyle{remark}
\newtheorem{remark}[thm]{Remark}

\newcommand{\ppi}{\pi}
\newcommand{\ii}{\mathrm{i}}
\newcommand{\ee}{\mathrm{e}}
\newcommand{\dd}{\,\mathrm{d}}
\newcommand{\rd}{\mathrm{d}}

\newcommand{\CC}{\mathbb{C}}
\newcommand{\DD}{\mathbb{D}}

\newcommand{\QQ}{\mathbb{Q}}
\newcommand{\RR}{\mathbb{R}}
\newcommand{\TT}{\mathbb{T}}
\newcommand{\ZZ}{\mathbb{Z}}
\newcommand{\NN}{\mathbb{N}}

\newcommand{\Hol}{\mathrm{Hol}}

\newcommand{\orb}{\mathrm{orb}}
\newcommand{\Per}{\mathrm{Per}}

\newcommand{\wind}{\mathrm{wind}}

\DeclareMathOperator{\re}{Re}
\DeclareMathOperator{\dist}{dist}

\DeclareMathOperator{\ind}{index}

\DeclareMathOperator{\spa}{span}

\title[Hypercyclic Bergman--Toeplitz Operators]{Hypercyclic Bergman--Toeplitz operators with some harmonic symbols}

\author[Qianrui Leng]{Qianrui Leng}
\address{\textsuperscript{1} College of Mathematics and Statistics, Chongqing University, Chongqing, 401331, P. R. China}
\email{17788619117@163.com}

\author[Xiaoyan Zhang]{Xiaoyan Zhang}
\address{\textsuperscript{2} School of Sciences, Southwest Petroleum University, Chengdu, 610500, P. R. China}
\email{15282233835@163.com}

\author[Xianfeng Zhao]{Xianfeng Zhao}
\address{
\textsuperscript{3}
College of Mathematics and Statistics, Chongqing University, Chongqing, 401331, P. R. China}
\email{xianfengzhao@cqu.edu.cn}

\keywords{Toeplitz operator; harmonic symbol; Bergman space; hypercyclicity}

\subjclass[2020]{47A16, 47B35}

\begin{document}
	\begin{abstract}
		In this paper, we  study the dynamical properties of Toeplitz operators with symbols of the form $a \overline{z} + p (z)$ on the Bergman space, where $a\neq 0$ and $p$ is analytic on the closed unit disk. Some necessary conditions and some sufficient conditions for such a class of operators to be hypercyclic, weakly mixing, mixing, chaotic, or frequently hypercyclic are obtained. As an application, we obtain a complete characterization of hypercyclicity for Toeplitz operators with harmonic linear polynomial symbols. Additionally, we show that Toeplitz operators with the same symbol may exhibit different hypercyclicity behavior on the Hardy and Bergman spaces.
	\end{abstract}
	
	\maketitle
	
	\section{Introduction}\label{S1}
	
In this paper, we use $\NN$, $\NN_+$, $\ZZ$, $\QQ$, $\RR$ and $\CC$ to denote the sets of nonnegative integers, positive integers, integers, rationals, real numbers and complex numbers, respectively. Besides, we denote $\DD = \{ z \in \CC : |z| < 1 \}$, $\overline{\mathbb D}=\{ z \in \CC : |z| \leqslant 1 \}$, $\TT = \{ z \in \CC : |z| = 1 \}$, $\widehat{\DD} = \{ z \in \CC : |z| > 1 \}$, $\DD_0 = \DD \backslash \{ 0 \}$, and $\overline{\DD}_0 = \overline{\DD} \backslash \{ 0 \}$. The open disk in the complex plane centered at $z$ with radius $r$ is denoted by $D (z ,r)$.
	
	The classical \textit{Hardy space} is defined as
	\begin{align*}
		H^ 2 = \left\{ f \in L^ 2 (\TT, \rd m) : \int_{\TT} f (\ee^ {\ii t}) \ee^ {\ii n t} \dd m(\ee^ {\ii t}) = 0 \textrm{ for } n \in \NN_+ \right\},
	\end{align*}
	where $\rd m$ is the normalized Lebesgue measure on $\TT$. It can be shown that $H^ 2$ is a closed subspace of $L^ 2 (\TT, \rd m)$, and we use $\mathbf{P}$ to denote the orthogonal projection from $L^ 2 (\TT, \rd m)$ onto $H^ 2$. For an essentially bounded function $\varphi$ on $\TT$ (with respect to $\rd m$), the \textit{Toeplitz operator with symbol $\varphi$} on the Hardy space, or simply the Hardy--Toeplitz operator with symbol $\varphi$, is defined as
	\begin{align*}
		\mathbf{T}_{\varphi} f = \mathbf{P} (\varphi f), \qquad f \in H^ 2,
	\end{align*}
	which is a bounded linear operator on $H^ 2$. The following three identities of Hardy--Toeplitz operators are elementary:
	\begin{align} \label{eq4}
		\mathbf{T}_{\varphi} + \mathbf{T}_{\psi} = \mathbf{T}_{\varphi + \psi}, \qquad \lambda \mathbf{T}_{\varphi} = \mathbf{T}_{\lambda \varphi}, \qquad \textrm{and} \qquad \mathbf{T}_{\varphi}^ * = \mathbf{T}_{\overline{\varphi}},
	\end{align}
	where $\varphi, \psi \in L^ {\infty} (\TT, \rd m)$ and $\lambda \in \CC$. It is also clear from definition that
	\begin{align}
		\| \mathbf{T}_{\varphi} \| \leqslant \| \varphi \|_{L^ {\infty} (\TT, \rd m)}, \qquad \varphi \in L^ {\infty} (\TT, \rd m). \label{ineq3}
	\end{align}
	For the theory of Hardy spaces and Toeplitz operators thereon, the reader is referred to \cite{Dou, Zhu}.
	
	The study of hypercyclic Toeplitz operators goes back to Rolewicz \cite{Rol}, who proved that the Toeplitz operator $\mathbf{T}_{\alpha \overline{z}}$ on the Hardy space is hypercyclic if $|\alpha| > 1$. The characterizations of hypercyclicity and supercyclicity for general weighted shifts were completed by Salas \cite{Sal1, Sal2}. A much more general result was obtained by Godefroy and Shapiro \cite{GS}. Indeed, they showed that in many analytic function Hilbert spaces on a domain $\Omega$, including the Hardy space and the Bergman space to be introduced later, the Toeplitz operator with non-constant bounded co-analytic symbol $\overline{q}$ is hypercyclic if and only if $q (\Omega)$ intersects the unit circle $\TT$. After about 20 years, Shkarin \cite{Shk} and Baranov and Lishanskii \cite{BL} gave a complete characterization of the hypercyclicity of the Toeplitz operator $\mathbf{T}_{a \overline{z} + c_0 + c_1 z}$.
	
	\begin{thm} \label{theorem: Shkarin's result}
		Suppose that $\varphi (z) = a \overline{z} + c_0 + c_1 z$, where $a, c_0, c_1 \in \CC$ and $a \neq 0$. Then $\mathbf{T}_{\varphi}$ is hypercyclic on the Hardy space if and only if $|a| > |c_1|$ and $\varphi (\DD) \cap \TT \neq \varnothing$.
	\end{thm}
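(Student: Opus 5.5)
This is the Shkarin / Baranov--Lishanskii theorem, and I would reduce it to the structure of the tridiagonal operator $\mathbf{T}_\varphi = a S^* + c_0 + c_1 S$ on $H^2$, where $S$ is the unilateral shift. Writing $\varphi(e^{it}) = a e^{-it} + c_0 + c_1 e^{it}$, the boundary curve $\varphi(\TT)$ is an ellipse, possibly degenerate. When $|a| \ne |c_1|$, $\mathbf{T}_\varphi - \lambda$ is Fredholm for $\lambda \notin \varphi(\TT)$, with index minus the winding number of $\varphi - \lambda$ about $0$. For $|a| > |c_1|$ the winding is negative inside the ellipse, so $\mathbf{T}_\varphi - \lambda$ is surjective with one-dimensional kernel there. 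For $|a| < |c_1|$ these operators are injective with nontrivial cokernel, so $\mathbf{T}_\varphi^*$ has eigenvalues inside the ellipse.

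\textbf{Necessity.} If $|a| < |c_1|$, then $\mathbf{T}_\varphi^* = \overline{a} S + \overline{c_0} + \overline{c_1} S^*$ has an eigenvalue (any point inside the ellipse works), and an operator whose adjoint has an eigenvalue is never hypercyclic. If $|a| = |c_1|$, write $c_1 = a\omega$ with $|\omega|=1$. Then, after rotating the variable and multiplying by a unimodular constant, $\mathbf{T}_\varphi$ becomes a normal-like perturbation, essentially a real multiple of $S + S^*$ plus a constant. I would show it is unitarily equivalent to a normal operator; a normal operator cannot be hypercyclic, because $\|T^n x\|$ behaves like spectral integrals of $|\lambda|^{2n}$ and the orbit cannot be dense. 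Next, suppose $|a| > |c_1|$ and $\varphi(\DD) \cap \TT = \varnothing$. By connectedness, either $\varphi(\DD)$ lies in $\DD$ or it lies outside $\overline{\DD}$. The spectrum is the filled ellipse $\overline{\varphi(\DD)}$. If it lies in $\overline{\DD}$, the spectral radius is at most $1$; if it lies outside $\DD$, $T$ is invertible with $\sigma(T^{-1}) \subset \overline{\DD}$. By Kitai's theorem, every component of the spectrum of a hypercyclic operator must meet $\TT$. The boundary cases where the ellipse touches $\TT$ need care: there I would use the estimate $\|\mathbf{T}_\varphi\| \le \|\varphi\|_\infty = \sup_{\TT}|\varphi| \le 1$, from (\ref{ineq3}), to rule out hypercyclicity, since a contraction cannot be hypercyclic. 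The analogous argument applies to the inverse in the other case, and that step is the delicate one.

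\textbf{Sufficiency.} Assume $|a| > |c_1|$ and $\varphi(\DD) \cap \TT \neq \varnothing$. For $\lambda$ in the open elliptical region $\varphi(\DD)$, solve $(aS^* + c_0 + c_1 S)f = \lambda f$ by the recurrence $a f_{n+1} + (c_0 - \lambda) f_n + c_1 f_{n-1} = 0$ with $f_{-1} = 0$. The characteristic roots $r_1, r_2$ satisfy $r_1 r_2 = c_1/a$, and $\lambda$ lying inside the ellipse forces both roots into $\DD$. Hence $f_\lambda \in H^2$, and $\lambda \mapsto f_\lambda$ is holomorphic, or at least analytic off a discrete set where I would renormalize. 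The eigenvectors for $|\lambda| < 1$, and separately those for $|\lambda| > 1$, span dense subspaces: a vector orthogonal to all of them gives a holomorphic function $\lambda \mapsto \langle f_\lambda, g\rangle$ vanishing on an open set, hence everywhere, and the $f_\lambda$ over the whole region span densely because their generating functions $1/(a - (c_0-\lambda)z + c_1 z^2)$, suitably normalized, span. The Godefroy--Shapiro criterion then gives hypercyclicity. Openness of $\varphi(\DD)$ ensures that both $\{|\lambda|<1\}$ and $\{|\lambda|>1\}$ meet it in open sets.

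The main obstacle I expect is the density of the span of the eigenvectors, together with the degenerate and tangential cases in the necessity direction.
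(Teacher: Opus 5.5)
The paper does not prove this statement; it quotes it from Shkarin and Baranov--Lishanskii, so I am judging your outline on its own. The skeleton is sound. It uses Coburn's lemma plus the index formula, an eigenvalue of $\mathbf{T}_\varphi^*$ when $|a|<|c_1|$, and normality when $|a|=|c_1|$ (in fact $A=S^*+\omega S$ satisfies $A^*=\overline{\omega}A$, so $\mathbf{T}_\varphi$ is itself normal). It then uses the contraction bound when $\overline{\varphi(\DD)}\subseteq\overline{\DD}$, and Godefroy--Shapiro for sufficiency. However, the two steps that carry the real content are missing.

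\textbf{First gap: the exterior case.} This is the case where the filled ellipse $K=\overline{\varphi(\DD)}$ lies outside $\DD$ and touches $\TT$. Your ``analogous argument for the inverse'' would need $\|\mathbf{T}_\varphi^{-1}\|\le\|1/\varphi\|_\infty$. But $\mathbf{T}_\varphi^{-1}\neq\mathbf{T}_{1/\varphi}$, and no such bound holds in general: for a non-constant unimodular $\varphi$ with $\mathbf{T}_\varphi$ invertible, it would make the contraction $\mathbf{T}_\varphi$ unitary. Nor can the spectrum alone decide, since $I+B$ with $B$ a quasinilpotent weighted backward shift is hypercyclic (Salas) and has spectrum $\{1\}$. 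The missing idea is convexity. Since $K$ is convex and disjoint from $\DD$, some $\mu\in\TT$ satisfies $\re(\overline{\mu}\lambda)\ge 1$ on $K\supseteq\varphi(\TT)$. Hence
\[
\|\mathbf{T}_\varphi f\|\,\|f\|\ \ge\ \re\langle\overline{\mu}\,\mathbf{T}_\varphi f,f\rangle=\int_{\TT}\re(\overline{\mu}\varphi)\,|f|^2\dd m\ \ge\ \|f\|^2 .
\]
So $\|\mathbf{T}_\varphi^n f\|\ge\|f\|$ and no orbit of $f\neq 0$ approaches $0$. This disposes of the whole exterior case at once, whether or not $K$ is tangent to $\TT$.

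\textbf{Second gap: density of eigenvectors.} The claim that ``their generating functions, suitably normalized, span'' is exactly the density statement you need, not a proof of it. Two minor corrections: the generating function is $a/(a+(c_0-\lambda)z+c_1z^2)=a/\big(z(\widetilde\varphi(z)-\lambda)\big)$ with $\widetilde\varphi(z)=a/z+c_0+c_1z$, and no renormalization is needed, because this denominator has no zeros on $\overline{\DD}$ for $\lambda\in\varphi(\DD)$.

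The ingredient you are missing is that $\widetilde\varphi$ is univalent on $\{0<|z|<R\}$ for some $R>1$. If $\widetilde\varphi(z_1)=\widetilde\varphi(z_2)$ with $z_1\neq z_2$, then $z_1z_2=a/c_1$, which is impossible once $R^2<|a/c_1|$. Given this, fix $\lambda_0\in\varphi(\DD)$ and set $u=1/(\widetilde\varphi-\lambda_0)$.
\begin{enumerate}
\item The $\lambda$-derivatives of $f_\lambda$ at $\lambda_0$ are $k!\,a\,(u/z)\,u^k$.
\item $u$ is univalent on a disk $R'\DD$ with $R'>1$ and $u(0)=0$, and $u/z$ is invertible in $H^\infty$.
\item Runge's theorem on the simply connected set $u(R'\DD)$ makes polynomials in $u$ dense in $H^2$.
\end{enumerate}
Hence $g\perp f_\lambda$ on an open set forces $g=0$.

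Alternatively, the paper's Bergman-space device in Lemma \ref{lemma: key lemma} transfers directly. Take $\psi=c_1\rho^{-1}\overline{z}+c_0+a\rho z$ with $R^{-1}<\rho<1$, and $v_j=\rho^jz^j$. Then $v_j=f_j(\mathbf{T}_\psi)v_0$. Moreover, $\sigma(\mathbf{T}_\psi)$ is the closed region bounded by $\widetilde\varphi(\rho^{-1}\TT)$, which lies inside $\varphi(\DD)$. Therefore $0=\sum_j\overline{b}_jf_j(\mathbf{T}_\psi)v_0=\sum_j\overline{b}_jv_j$, which gives $g=0$.
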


	Baranov and Lishanskii also investigated general harmonic polynomial symbols, and in particular obtained the following result:
	
	\begin{thm}[\cite{BL}] \label{theorem: hypercyclic Hardy-Toeplitz operators}
		Let  $a \in \CC \backslash \{ 0 \}$, $p$  be a bounded analytic function on $\DD$, and $\varphi (z) = a \overline{z} + p(z)$. Also let $\widetilde{\varphi} (z) = \frac{a}{z} + p (z)$.
		\begin{enumerate}[label = \textup{(\alph*)}]
			\item If $\mathbf{T}_{\varphi}$ is hypercyclic on the Hardy space, then
			\begin{enumerate}[label = \textup{(\arabic*)}]
				\item the function $\widetilde{\varphi}$ is univalent in $\DD_0$;
				\item $\overline{\DD} \cap [\CC \backslash \widetilde{\varphi} (\DD_0)] \neq \varnothing$ and $\widehat{\DD} \cap [\CC \backslash \widetilde{\varphi} (\DD_0)] \neq \varnothing$.
			\end{enumerate}
			\item Assume in addition that $p$ is continuous on the closed disk  $\overline{\DD}$ and that
			\begin{enumerate}[label = \textup{(\arabic*$'$)}]
				\item the function $\widetilde{\varphi}$ is univalent in $\overline{\DD}_0$;
				\item $\DD \cap [\CC \backslash \widetilde{\varphi} (\DD_0)] \neq \varnothing$ and $\widehat{\DD} \cap [\CC \backslash \widetilde{\varphi} (\DD_0)] \neq \varnothing$.
			\end{enumerate}
			Then $\mathbf{T}_{\varphi}$ is hypercyclic on the Hardy space.
		\end{enumerate}
	\end{thm}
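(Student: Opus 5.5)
This is the Baranov--Lishanskii theorem on the Hardy space. The plan is to model $\mathbf{T}_{\varphi}$ through its action on Cauchy kernels. Write $k_\lambda(z) = (1-\overline{\lambda} z)^{-1}$, $\lambda\in\DD$. For $\lambda \neq 0$ the adjoint acts by
\[
\mathbf{T}_{\varphi}^ * k_\lambda = \mathbf{T}_{a z + \overline{p}} k_\lambda = a z k_\lambda + \overline{p(\lambda)} k_\lambda ,
\]
and $z k_\lambda = (k_\lambda - 1)/\overline{\lambda}$. Hence
\[
\mathbf{T}_{\varphi}^ * k_\lambda = \overline{\widetilde{\varphi}(\lambda)}\, k_\lambda - \frac{a}{\overline{\lambda}}\, \mathbf{1}.
\]
So $\mathbf{T}_{\varphi}^*$ acts on $\spa\{k_\lambda\}$ as a rank-one perturbation of a "diagonal" operator with eigenvalue function $\overline{\widetilde{\varphi}}$.

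A cleaner route is to differentiate in $\lambda$ or take combinations to produce genuine eigenvectors. For $w \in \CC\setminus \widetilde{\varphi}(\DD_0)$, solve $(\mathbf{T}_{\varphi} - w) f = g$ explicitly. Multiplying $\mathbf{T}_{\varphi} f = a S^* f + p f$ by $z$ gives $(a + (p-w) z) f = z g + a f(0)$, that is,
\[
f = \frac{z g + a f(0)}{z(\widetilde{\varphi}(z) - w)} .
\]
This identifies the point spectrum. For $w$ in a bounded component of $\CC \setminus \widetilde{\varphi}(\TT)$ with winding number $-1$, there is exactly one zero $z_w \in \DD_0$ of $\widetilde{\varphi} - w$, and $h_w = z_w/(z - z_w)$-type functions, more precisely $h_w(z) = \frac{1}{z\,(\widetilde{\varphi}(z)-w)}$ times the normalizing factor $a$, give eigenvectors of $\mathbf{T}_\varphi$ with eigenvalue $w$, depending analytically on $w$.

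\textbf{Part (b).} Let $\Omega = \widetilde{\varphi}(\DD_0)$. Condition (1$'$) together with the argument principle shows that $\Omega$ is exactly the region where $\widetilde{\varphi} - w$ has a zero in $\DD_0$; univalence makes this zero unique and simple, and $\Omega$ is connected. I then apply the Godefroy--Shapiro criterion. The eigenvectors $h_w$, $w\in\Omega\cap\DD$ and $w\in \Omega\cap\widehat{\DD}$, each span a dense subspace. Both open sets are nonempty: since $\Omega$ is connected and unbounded (it contains a neighbourhood of $\infty$ as $z\to0$), it meets $\widehat{\DD}$. It also meets $\DD$, because $\Omega$ is open and connected and condition (2$'$) puts points of its complement on both sides; I expect this needs a short argument. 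Density comes by duality. If $u\perp h_w$ for all $w$ in an open set, then by analyticity $u \perp h_w$ for all $w\in\Omega$. Expanding $h_w$ near $w=\infty$, where $z_w\to 0$, then gives $\langle u, z^n\rangle = 0$ for all $n$, so $u=0$. Continuity of $p$ on $\overline{\DD}$ is used to control $\widetilde{\varphi}$ on $\TT$, so that the argument principle and the boundedness of $h_w$ in $H^2$ hold.

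\textbf{Part (a).} Hypercyclicity forces $\mathbf{T}_\varphi^*$ to have no eigenvalues. But $\mathbf{T}_\varphi^*$ is a compression of $\overline{\widetilde{\varphi}}$. If $\widetilde{\varphi}(z_1)=\widetilde{\varphi}(z_2)=w$ with $z_1\ne z_2$ in $\DD_0$, then the combination
\[
\overline{z_1}^{-1}k_{z_1}\cdot\overline{\cdots}
\]
chosen to cancel the $\mathbf{1}$ terms from the kernel formula, namely $\lambda$-weighted $\bar z_2 k_{z_1} - \bar z_1 k_{z_2}$, is an eigenvector of $\mathbf{T}_\varphi^*$ with eigenvalue $\bar w$. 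A critical point gives a derivative kernel instead. This is a contradiction, which proves (1).

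For (2), suppose $\overline{\DD}\subset \Omega$. The formula for $f$ then shows that $\mathbf{T}_\varphi - w$ is Fredholm, or controls the spectrum, forcing $\sigma(\mathbf{T}_\varphi)$ to avoid $\TT$ except through components of $\CC\setminus\Omega$, all of which lie in $\widehat{\DD}$. By Kitai's condition every component of the spectrum must meet $\TT$, and this is the contradiction. Symmetrically, if $\widehat{\DD}\subset\Omega$, then $\|(\mathbf{T}_\varphi - w)^{-1}\|$ is controlled and a Riesz-projection argument gives a contradiction.

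The main obstacle is (a)(2). The $h_w$ are not in $H^2$ uniformly near the boundary, and one must show that the "resolvent" is bounded without continuity of $p$. Here I would try the Kitai / Riesz-idempotent argument applied to the spectral piece $\sigma(\mathbf{T}_\varphi)\cap\overline{\DD}$.
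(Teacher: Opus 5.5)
The paper only quotes this theorem from \cite{BL}, so your argument has to be judged on its own. Part (b) has a genuine gap: the spectral picture is inverted. Since $\widetilde{\varphi}-w$ has a simple pole at $0$, the argument principle says it has $1+\wind(\varphi(\TT),w)$ zeros in $\DD_0$. So for winding number $-1$ there are \emph{no} zeros, and that is exactly where $f_w=a/(a+z(p-w))=a/(z(\widetilde{\varphi}-w))$ is a bounded analytic eigenvector. For $w\in\Omega=\widetilde{\varphi}(\DD_0)$, your $h_w$ has a pole at $z_w\in\DD$ and is not in $H^2$. In fact your own formula, with $f(0)=-z_w g(z_w)/a$ chosen to cancel that pole, shows that $\mathbf{T}_\varphi-w$ is invertible there, which is precisely what you use in (a)(2). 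So $\Omega$ is the resolvent set in (a) and the eigenvalue set in (b). Under (1$'$), the curve $\varphi(\TT)$ is a Jordan curve and $\Omega$ is its exterior; every point of its bounded interior $\Omega_0$ is an eigenvalue. There are therefore no eigenvectors near $w=\infty$ to expand, so your density argument collapses. Condition (2$'$) is needed exactly to get $\Omega_0\cap\DD\neq\varnothing$ and $\Omega_0\cap\widehat{\DD}\neq\varnothing$, since an open set meeting the closure of a Jordan domain meets the domain.

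The missing idea is completeness of $\{f_\lambda:\lambda\in U\}$ for an open set $U\subseteq\Omega_0$, which is bounded. One way to get it: if $g\perp f_\lambda$ on $U$, then all $\lambda$-derivatives vanish at some $\lambda_0\in U$. That means $g\perp z^k/D^{k+1}=u^k/D$ for all $k\geqslant 0$, where $D=a+z(p-\lambda_0)$ is invertible in $H^\infty$ and $u=z/D=1/(\widetilde{\varphi}-\lambda_0)$. By (1$'$) and continuity of $p$, $u$ is a homeomorphism of $\overline{\DD}$ onto a closed Jordan domain and is conformal inside. Mergelyan's theorem then makes polynomials in $u$ dense in the disc algebra, hence in $H^2$, so $g=0$.

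In (a) there are two smaller points. First, the cancelling combination in (1) is $\overline{z_1}k_{z_1}-\overline{z_2}k_{z_2}$; the constant term is $\overline{a}/\overline{\lambda}$, not $a/\overline{\lambda}$. Second, in (2) the resolvent bound on $\Omega$ needs no continuity: univalence plus the open mapping theorem give $|\widetilde{\varphi}-w|\geqslant\varepsilon$ off a small disc about $z_w$. However, the case $\widehat{\DD}\subseteq\Omega$ cannot be settled by a Riesz-projection argument, because hypercyclic operators can have spectrum inside $\overline{\DD}$ (e.g.\ the weighted shift $T_\Psi$ of Proposition~\ref{NHHB}, whose weights tend to $1$). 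Use instead that the essential range of $\varphi|_{\TT}$ lies in $\sigma(\mathbf{T}_\varphi)\subseteq\CC\setminus\Omega\subseteq\overline{\DD}$ (Hartman--Wintner). Hence $\|\mathbf{T}_\varphi\|\leqslant\|\varphi\|_\infty\leqslant 1$, and a contraction is not hypercyclic.
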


	For subsequent developments on hypercyclic Toeplitz operators on the Hardy space, the reader is referred to \cite{ABCL, GLS, DE, FGO}. Following the establishment of the theory of hypercyclic Toeplitz operators on the Hardy space, we naturally turn our attention to another important space of analytic functions, namely the Bergman space. Let $\Hol (\DD)$ denote the space of analytic functions on $\DD$. The \textit{Bergman space} is defined by
	\begin{align*}
		L_a^ 2 = L^ 2 (\DD, \rd A) \cap \Hol (\DD),
	\end{align*}
	where $\rd A$ is the normalized Lebesgue area measure on $\DD$.
	Clearly, $L_a^ 2$ is a closed subspace of $L^ 2 (\DD, \rd A)$.  Letting $P$ be the orthogonal projection from $L^ 2 (\DD, \rd A)$ onto $L_a^ 2$, it can be shown that $P$ is an integral operator given by
	\begin{align}
		P f (z) = \int_{\DD} \frac{f(w)}{(1 - z \overline{w})^ 2} \dd A (w), \qquad z \in \DD. \label{eq10}
	\end{align}
	For $\varphi \in L^ {\infty} (\DD, \rd A)$, the \textit{Toeplitz operator with symbol $\varphi$} on the Bergman space, which is also referred to as the Bergman--Toeplitz operator with symbol $\varphi$, is defined by
	\begin{align}
		T_{\varphi} f (z) = P (\varphi f) (z) = \int_{\DD} \frac{\varphi (w) f (w)}{(1 - z \overline{w})^ 2} \dd A (w), \qquad z \in \DD. \label{eq3}
	\end{align}
	Like Hardy--Toeplitz operators, Bergman--Toeplitz operators are also bounded linear operators and satisfy all of the identities in (\ref{eq4}) and the inequality in  (\ref{ineq3}) with $\| \varphi \|_{L^ {\infty} (\TT, \rd m)}$ replaced by $\| \varphi \|_{L^ {\infty} (\DD, \rd A)}$. One can consult \cite{Zhu} for a detailed study of Bergman spaces and associated Toeplitz operators.
	
	Recently, the hypercyclicity of Bergman--Toeplitz operators was considered by the first and third authors in \cite{LZ}, and several necessary or sufficient conditions were obtained. In particular, it was shown that $T_{a \overline{z} + c_0 + c_1 z}$ is hypercyclic if $|a| > (3 + \sqrt{2}) |c_1|$ and the image of $\DD$ under the symbol $a \overline{z} + c_0 + c_1 z$ intersects $\TT$. It is worth noting that this result was improved by Abad \cite{Aba}, in which the constant $3 + \sqrt{2}$ is replaced by $1$, and hence obtain the following necessary and sufficient condition:
	
	\begin{thm}[{\cite[Theorem 3.6]{Aba}}] \label{theorem: Abad's result}
		Suppose that $\varphi (z) = a \overline{z} + c_0 + c_1 z$, where $a, c_0, c_1 \in \CC$ and $a \neq 0$. Then $T_{\varphi}$ is hypercyclic on the Bergman space if and only if $|a| > |c_1|$ and $\varphi (\DD) \cap \TT \neq \varnothing$.
	\end{thm}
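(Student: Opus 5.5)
The plan is to treat $T_\varphi-c_0=aT_{\overline z}+c_1T_z$ as a tridiagonal operator in the monomial basis. On the Bergman space,
\begin{align*}
T_{\overline z}z^n=\tfrac{n}{n+1}z^{n-1}, \qquad T_z=M_z .
\end{align*}
The identity $(z^2\,T_{\overline z}f)'=zf'$, which one checks on monomials, turns every eigenvalue equation for $T_\varphi$ or $T_\varphi^*=T_{\overline\varphi}$ into a first-order linear ODE. The eigenvectors can then be written down explicitly, and their membership in $L^2_a$ read off from the growth of their Taylor coefficients. Everything below is built on these explicit eigenfunctions.

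\textbf{Necessity.} First suppose $|c_1|\geqslant |a|$. I would solve the ODE
\begin{align*}
(\overline a z+\overline{c_0}-\lambda)f+\overline{c_1}\,T_{\overline z}f=0
\end{align*}
and show that its solution lies in $L^2_a$ for all $\lambda$ in a nonempty open set. The eigenvalues of $T_\varphi^*$ are then nonempty, which is impossible for a hypercyclic operator.

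Next suppose $|a|>|c_1|$ and $\varphi(\DD)\cap\TT=\varnothing$. Here $\varphi(\DD)$ is the open convex region bounded by an ellipse $E$, so $\overline{\varphi(\DD)}$ lies either in $\overline{\DD}$ or in $\CC\backslash\DD$. In the first case $\|T_\varphi\|\leqslant\|\varphi\|_\infty\leqslant 1$, so $T_\varphi$ is not hypercyclic. In the second case, convexity gives a unimodular $u$ with $\re(\overline u\lambda)\geqslant 1$ on $\overline{\varphi(\DD)}$. Then
\begin{align*}
\re\langle \overline u T_\varphi f,f\rangle=\int_{\DD}\re(\overline u\varphi)|f|^2\dd A\geqslant\|f\|^2,
\end{align*}
so $\|T_\varphi f\|\geqslant\|f\|$ for all $f$, and no orbit can approach $0$. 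This covers the tangential case as well, so no spectral argument is needed there.

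\textbf{Sufficiency.} Assume $|a|>|c_1|$ and $\varphi(\DD)\cap\TT\neq\varnothing$; I would use the Godefroy--Shapiro criterion. For $\lambda\in\varphi(\DD)$ I would solve
\begin{align*}
(aT_{\overline z}+c_1z+c_0-\lambda)f=0
\end{align*}
explicitly. The solution is essentially $(z-\alpha)^{s}(z-\beta)^{t}$-type, where $\alpha,\beta$ are the roots of $\widetilde\varphi(z)=\lambda$, with $\widetilde\varphi(z)=a/z+c_0+c_1z$ as in Theorem \ref{theorem: hypercyclic Hardy-Toeplitz operators}. I would verify that it lies in $L^2_a$ exactly when one root is in $\DD_0$, which is equivalent to $\lambda\in\varphi(\DD)$ because $\widetilde\varphi$ is univalent there when $|a|>|c_1|$. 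This yields eigenvectors $f_\lambda$ depending analytically on $\lambda$.

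Since $\varphi(\DD)$ is connected and meets $\TT$, the open sets $U_-=\varphi(\DD)\cap\DD$ and $U_+=\varphi(\DD)\cap\widehat\DD$ are both nonempty. The remaining task is to show that $\{f_\lambda:\lambda\in U\}$ spans a dense subspace for any nonempty open $U\subset\varphi(\DD)$. If $h\perp f_\lambda$ for all $\lambda\in U$, then $\lambda\mapsto\langle f_\lambda,h\rangle$ is antiholomorphic and vanishes on all of $\varphi(\DD)$. Letting $\lambda$ run to points where $f_\lambda$ degenerates, for example $\lambda\to c_0$ or to the boundary of the ellipse, one obtains that $h$ is orthogonal to all polynomials, hence $h=0$.

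\textbf{Main obstacle.} I expect the hardest step to be this density argument, together with the exact determination of when $f_\lambda\in L^2_a$. Unlike the Hardy case, the Bergman weights $\tfrac{n}{n+1}$ produce non-integer exponents in the ODE solutions, so the boundary behaviour must be handled carefully. My first attempt would be to expand $f_\lambda$ in powers of a local parameter around a degenerate point and show that the coefficients generate all monomials.
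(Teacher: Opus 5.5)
\textbf{Where the proposal stands.} Your necessity argument is a self-contained substitute for the paper's citation of \cite[Proposition 3.5]{LZ}, apart from one point noted in the last paragraph. In particular, the numerical-range estimate $\|T_\varphi f\|\geqslant\|f\|$ when $\overline{\varphi(\DD)}\subseteq\CC\backslash\DD$ is a nice way to handle the tangential case.

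\textbf{The gap.} The genuine gap is the density step in the sufficiency half. This is the whole content of the theorem, and you do not supply it. Vanishing of $F(\lambda)=\langle f_\lambda,h\rangle$ on $\varphi(\DD)$ only says that $h$ is orthogonal to every $\partial_\lambda^k f_\lambda|_{\lambda=\lambda_*}$, and proving that such a family is complete is exactly the problem. The degenerations you propose to exploit do not deliver it:
\begin{itemize}
\item at $\lambda=c_0$ nothing degenerates: the roots $\pm\sqrt{-a/c_1}$ are distinct and $f_{c_0}=(1+c_1z^2/a)^{-3/2}$;
\item the roots coalesce only at the foci $c_0\pm2\sqrt{ac_1}$, where $f_\lambda$ is still analytic on a neighbourhood of $\overline{\DD}$;
\item as $\lambda\to\varphi(\TT)$, the exponent at the root reaching $\TT$ has real part $>3/2$, so $\|f_\lambda\|\to\infty$ and no orthogonality relation passes to the limit.
\end{itemize}

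\textbf{The missing idea.} What is needed is a uniqueness mechanism for the polynomial series $F(\lambda)=\sum_j\overline{b}_ja_j(\lambda)$. Here $a_j(\lambda)=\langle f_\lambda,e_j\rangle$ has degree $j$ and obeys the recurrence of Lemma \ref{lemma: recurrence relation}. The paper (Lemma \ref{lemma: key lemma}) substitutes an operator for $\lambda$:
\begin{itemize}
\item take $\psi(z)=c_0+c_1\rho^{-1}\overline{z}+a\rho z$ and $v_j=\rho^je_j$; the matrix of $T_\psi$ in $\{v_j\}$ reproduces the recurrence, so $a_j(T_\psi)v_0=v_j$;
\item for $\rho<1$ close to $1$, $\sigma(T_\psi)$ is the closed region bounded by the smaller confocal ellipse $\psi(\TT)$; it is connected by Lemmas \ref{lemma: property of J} and \ref{lemma: connectedness for univalent case}, since $|c_1z^2/a|<1$ on $\rho^{-1}\DD_0$;
\item hence $\sigma(T_\psi)$ lies in $\Omega_0=\varphi(\DD)$, and the Riesz--Dunford calculus gives $0=F(T_\psi)v_0=\sum_j\overline{b}_j\rho^je_j$, so $h=0$.
\end{itemize}

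\textbf{Further corrections.}
\begin{itemize}
\item Your eigenvalue argument for $|c_1|\geqslant|a|$ fails when $|c_1|=|a|$. Then $\varphi-c_0=\omega u$ with $\omega^2=a/\overline{c_1}$, $|\omega|=1$ and $u$ real-valued, so $T_\varphi=c_0I+\omega T_u$ is normal. Its adjoint therefore has at most countably many eigenvalues, since eigenvectors of a normal operator for distinct eigenvalues are orthogonal. So the open set of $L_a^2$-solutions you want cannot exist. Non-hypercyclicity still holds, but the reason is that normal operators are never hypercyclic.
\item Your description of the eigenvectors is reversed. By Lemma \ref{lemma: univalent}, a root $z_1\in\DD_0$ of $\widetilde\varphi=\lambda$ places $\lambda$ in the exterior $\widetilde\varphi(\DD_0)$ of the ellipse. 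Then $f_\lambda$ has a branch point at $z_1$, because the condition $c_1z_1^2/a=\frac{m+2}{m+1}$ of Theorem \ref{theorem: spectrum and essential spectrum}(c) is impossible when $|c_1z_1^2/a|<1$. The eigenvalues are the $\lambda$ for which both roots lie in $\widehat{\DD}$, i.e.\ $\lambda\in\varphi(\DD)$. That is what you then use, so the slip is not fatal, but the stated equivalence is false.
\item The case $c_1=0$ has no second root and should be referred to the Godefroy--Shapiro theorem.
\end{itemize}
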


	Abad's proof relies on the theory of orthogonal polynomials. However, this approach is no longer applicable when the analytic part of the symbol $\varphi(z) = a \overline{z} + p(z)$ has degree greater than or equal to 2. In this paper, we continue the study of hypercyclic Toeplitz operators with symbols $\varphi (z) = a \overline{z} + p (z)$ on the Bergman space, where $a \in \CC \backslash \{ 0 \}$ and $p$ is an  analytic function  on $\overline{\DD}$. Having provided the necessary preliminaries, in Section \ref{Dynamical properties} we establish a necessary condition (Theorem \ref{theorem: necessary condition for hypercyclicity}) and a sufficient condition (Theorem \ref{theorem: sufficient condition for hypercyclicity}) for the hypercyclicity and other dynamical properties of Toeplitz operators with symbols $\varphi$ in terms of the spectrum of $T_{\varphi}$, respectively. As an application, we  give an alternative proof of Theorem \ref{theorem: Abad's result}.
	
	Comparing Theorems \ref{theorem: Shkarin's result} and \ref{theorem: Abad's result}, we observe that when $\varphi$ is a harmonic linear polynomial, $\mathbf{T}_{\varphi}$ and $T_{\varphi}$ share the same hypercyclicity. This is partly due to the fact that $\mathbf{T}_{\varphi}$ and $T_{\varphi}$ have identical spectra (see \cite[Theorem 3.1]{GZZ}). In fact, it is not hard to construct a symbol $\Psi$ such that the Hardy--Toeplitz operator $\mathbf{T}_{\Psi}$ is non-hypercyclic but the corresponding Bergman--Toeplitz operator $T_{\Psi}$ is hypercyclic, see Proposition \ref{NHHB} for the details. On the other hand, in the final section of this paper we can construct a more interesting example: there exists a bounded harmonic function $\Phi$ such that $\mathbf{T}_{\Phi}$ is hypercyclic on the Hardy space while $T_{\Phi}$ is non-hypercyclic on the Bergman space; the underlying reason is that, for a general bounded harmonic symbol, their spectra do not always coincide. The technical difficulty of this construction lies in the fact that, for bounded harmonic symbols of the form $\varphi (z) = a \overline{z} + p (z)$, although we know that there exists such a symbol whose associated Bergman--Toeplitz operator has a disconnected spectrum (see \cite[Theorem 4.1]{GZZ}), and is thus non-hypercyclic by Theorem \ref{theorem: necessary condition for hypercyclicity} of the present paper, the determination of the hypercyclicity of the Hardy--Toeplitz operator with the same symbol is far from straightforward. Theorem \ref{theorem: hypercyclic Hardy-Toeplitz operators} indicates that the hypercyclicity of the Hardy--Toeplitz operator boils down essentially to the univalence of $\widetilde{\varphi} (z) = a z^ {-1} + p (z)$ on $\DD_0$. This leads us to employ tools from univalent function theory. More precisely, the symbol $\Phi$ introduced later arises from a special class of univalent analytic functions.
	
	\section{Preliminaries}
	
	\noindent \textbf{1.~Linear dynamics}
	\vspace{5pt}
	
	Let $X$ be a separable Banach space and $\mathfrak{L} (X)$ the space of all bounded linear operators on $X$. The identity operator on a Banach space will be always denoted by $I$. An operator $T \in \mathfrak{L} (X)$ is said to be \textit{hypercyclic} if there exists a vector $x \in X$ such that the $T$-orbit of $x$, namely,
	\begin{align*}
		\orb{(x, T)} = \{ T^ n x: n \in \NN \},
	\end{align*}
	is dense in $X$. Hypercyclic operators enjoy the following spectral property.
	
	\begin{thm}[{\cite[Theorem 1.18]{BM}}] \label{theorem: spectra of hypercyclic operators}
		Let $X$ be a separable Banach space, and let $T \in \mathfrak{L} (X)$ be hypercyclic. Then every connected component of the spectrum of $T$ intersects $\TT$.
	\end{thm}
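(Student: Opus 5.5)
The plan is to argue by contradiction via a Riesz decomposition. Suppose $T$ is hypercyclic with hypercyclic vector $x$. Suppose also that some connected component $K$ of $\sigma(T)$ fails to meet $\TT$. Since $\sigma(T)$ is compact, a standard separation fact applies: a connected component of a compact Hausdorff space is the intersection of the clopen sets containing it. So we can find a clopen subset $\sigma_1 \subseteq \sigma(T)$ with $K \subseteq \sigma_1$ and $\sigma_1 \cap \TT = \varnothing$. Here we use that $\TT \cap \sigma(T)$ is compact and disjoint from $K$, and a finite-intersection argument. Put $\sigma_2 = \sigma(T)\setminus\sigma_1$. Both pieces are compact and disjoint, and $\sigma_1$ is nonempty.

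Next, apply the Riesz functional calculus. Let $E = \frac{1}{2\pi \ii}\int_\Gamma (\lambda I - T)^{-1}\dd\lambda$, where $\Gamma$ surrounds $\sigma_1$ but not $\sigma_2$. This $E$ is a nonzero idempotent commuting with $T$, and $X_1 = E X$ is a nonzero closed $T$-invariant subspace with $\sigma(T|_{X_1}) = \sigma_1$. The map $E$ is continuous and onto $X_1$, and $E T^n x = (T|_{X_1})^n E x$. Hence $E x$ is hypercyclic for $T_1 := T|_{X_1}$ on the nonzero space $X_1$.

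Now split $\sigma_1 = \sigma_1^{in}\cup\sigma_1^{out}$ according to $|\lambda|<1$ or $|\lambda|>1$. These are again disjoint clopen pieces, and at least one is nonempty. Repeating the Riesz projection step, we get a nonzero invariant subspace $Y$ on which the restriction $S$ is hypercyclic, with either $\sigma(S)\subseteq\DD$ or $\sigma(S)\subseteq\widehat{\DD}$.

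In the first case the spectral radius is less than $1$, so by the spectral radius formula $\|S^n\|\to 0$. Every orbit then tends to $0$ and is bounded, so it cannot be dense in $Y \neq \{0\}$.

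In the second case $S$ is invertible with $r(S^{-1})<1$, so $\|S^{-n}\|\le C q^{n}$ for some $q<1$. This gives $\|S^n y\|\ge C^{-1}q^{-n}\|y\| \to\infty$ for $y\ne 0$. A hypercyclic vector $y$ is nonzero, so its orbit has only finitely many points in the unit ball of $Y$. A finite set is not dense in a ball of the nonzero Banach space $Y$, which is a contradiction.

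The main point needing care is the first step: passing from ``a component avoids $\TT$'' to a clopen piece of the spectrum avoiding $\TT$. This is exactly the compactness-plus-quasicomponent argument above. The remaining steps are standard Riesz projection bookkeeping, together with the observation that a quotient (or projection) of a hypercyclic operator is hypercyclic.
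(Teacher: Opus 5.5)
Your argument is correct and is essentially the standard proof (due to Kitai) in Bayart--Matheron, which the paper cites rather than reproving: you isolate a clopen piece of $\sigma(T)$ avoiding $\TT$, pass to the Riesz spectral subspace where the restriction stays hypercyclic as a quasi-factor, and rule out spectrum inside $\DD$ or inside $\widehat{\DD}$ with the spectral radius formula. Your second Riesz splitting by modulus is harmless but unnecessary, since connectedness already places $K$ entirely in $\DD$ or entirely in $\widehat{\DD}$, so you can choose the clopen set on that side from the start.
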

	
	There are also some important strengthenings of the hypercyclicity. Let $T \in \mathfrak{L} (X)$.
	\begin{itemize}
		\item $T$ is said to be \textit{frequently hypercyclic} if there exists an $x \in X$ satisfying the following: for every nonempty open subset $V$ of $X$, there is a strictly increasing sequence $\{ n_k \}_{k = 1}^ {\infty}$ of $\NN_+$ and a positive integer $C$ such that
		\begin{align*}
			T^ {n_k} x \in V \qquad \textrm{and} \qquad n_k \leqslant C k
		\end{align*}
		for all $k \in \NN_+$.
		\item $T$ is said to be (\textit{topologically}) \textit{mixing} if for every pair $(U, V)$ of nonempty open subsets of $X$, there exists a positive integer $N$ such that
		\begin{align*}
			T^ n (U) \cap V \neq \varnothing, \qquad n \geqslant N.
		\end{align*}
		\item $T$ is said to be \textit{weakly mixing} if for every pair $(U, V)$ of nonempty open subsets of $X$ and for every positive integer $N$, there exists some $s \in \NN_+$ such that
		\begin{align*}
			T^ n (U) \cap V \neq \varnothing, \qquad s \leqslant n \leqslant s + N.
		\end{align*}
		\item To define chaotic operators, we first introduce the concept of periodic points. A vector $x \in X$ is a \textit{periodic point} of $T$ if $T^ n x = x$ for some $n \in \NN_+$. The set of all periodic points of $T$ will be denoted by $\Per(T)$. Then we say that $T$ is \textit{chaotic} if it is hypercyclic and if $\Per(T)$ is dense in $X$.
	\end{itemize}
	
	By the definitions above, chaoticity and frequent hypercyclicity are stronger properties than hypercyclicity. Using Birkhoff's transitivity theorem (see \cite[Theorem 1.2]{BM}), it is also easy to see that both mixing and weak mixing are stronger than hypercyclicity. In addition, it can be shown that chaoticity is stronger than weak mixing (see \cite[Corollary 6.11]{BM}), and it is clear from the definitions that mixing is stronger than weak mixing. We summarize these conclusions in a lemma for ease of reference.
	
	\begin{lem} \label{lemma: dynamical properties}
		Suppose that $X$ is a separable Banach space and $T \in \mathfrak{L} (X)$. The five dynamical properties of $T$ defined above satisfy:
		\begin{align*}
			\begin{matrix}
				\textrm{chaoticity} & \Longrightarrow & \textrm{weak mixing property} & \Longrightarrow & \hspace{-2em} \textrm{hypercyclicity} \\[4pt]
				& & \big\Uparrow & & \hspace{-2em} \big\Uparrow \\[4pt]
				& & \textrm{mixing property} & & \hspace{-2em} \textrm{frequent hypercyclicity.}
			\end{matrix}
		\end{align*}
	\end{lem}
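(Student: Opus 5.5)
The lemma collects four implications, and I will prove each one separately. Two of them come straight from the definitions. For \emph{mixing $\Rightarrow$ weak mixing}: given nonempty open sets $U,V$ and $N\in\NN_+$, mixing provides $M$ such that $T^n(U)\cap V\neq\varnothing$ for every $n\geqslant M$. Taking $s=\max\{M,1\}$, the condition then holds for all $s\leqslant n\leqslant s+N$.

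For \emph{frequent hypercyclicity $\Rightarrow$ hypercyclicity}, let $x$ be the vector from the definition. Every nonempty open $V$ contains some $T^{n_k}x$, so $\orb(x,T)$ meets every nonempty open set and is therefore dense.

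For \emph{weak mixing $\Rightarrow$ hypercyclicity}, I will use Birkhoff's transitivity theorem \cite[Theorem 1.2]{BM}. On a separable Banach space, which is a complete separable metric space without isolated points when $X\neq\{0\}$, an operator is hypercyclic if and only if it is topologically transitive. Transitivity means that for all nonempty open $U,V$ there is some $n$ with $T^n(U)\cap V\neq\varnothing$. Weak mixing, taken with $N=1$, produces such an $n$ (namely $n=s$). So $T$ is transitive, and hence hypercyclic. The trivial space $X=\{0\}$ is excluded implicitly, or can be handled by convention.

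The remaining implication, \emph{chaoticity $\Rightarrow$ weak mixing}, is the only nontrivial one. It is the main obstacle, and I would not reprove it but cite \cite[Corollary 6.11]{BM}. If a self-contained argument were wanted, the route would go through the Hypercyclicity Criterion. Let $X_0=\Per(T)$, which is dense by assumption, and let $x_0$ be a hypercyclic vector. The key step is to show that $T$ satisfies the criterion along a suitable sequence $(n_k)$: one needs approximate right inverses $S_{n_k}$ defined on the dense orbit, obtained by choosing $n_k$ so that $T^{n_k}$ sends a suitable point near $0$ close to a target. Periodicity then gives $T^{n_k}y=y$ for $y$ in a finite set of periodic points whenever $n_k$ is a common multiple of their periods. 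The Bès--Peris theorem then turns the criterion into weak mixing, i.e.\ $T\oplus T$ being transitive. Because this is a standard textbook result, the cited corollary suffices.
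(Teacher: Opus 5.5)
Your proposal is correct and follows the paper's route: mixing $\Rightarrow$ weak mixing and frequent hypercyclicity $\Rightarrow$ hypercyclicity come straight from the definitions, weak mixing $\Rightarrow$ hypercyclicity follows from Birkhoff's transitivity theorem \cite[Theorem 1.2]{BM}, and chaos $\Rightarrow$ weak mixing is obtained by citing \cite[Corollary 6.11]{BM}.

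One point is worth making explicit, and the paper is silent on it too: \cite{BM} defines weak mixing as transitivity of $T\oplus T$, whereas the paper defines it by return sets containing arbitrarily long intervals. So the citation needs a short bridge. By Furstenberg's theorem, $T^{\oplus(N+1)}$ is then transitive, and applying this to $U\times\cdots\times U$ and $V\times T^{-1}V\times\cdots\times T^{-N}V$ gives an $n$ with $T^{n+j}(U)\cap V\neq\varnothing$ for all $0\leqslant j\leqslant N$.
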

	
	In the remainder of this subsection, we give some useful characterizations of chaoticity, mixing property and frequent hypercyclicity, each of which requires the operator to have a large supply of eigenvectors. We begin with the classical Shapiro--Godefroy criterion, which gives
a sufficient condition for an operator to be mixing.
	
	\begin{lem}[Shapiro--Godefroy criterion] \label{lemma: S-G criterion}
		Let $X$ be a separable Banach space and $T \in \mathfrak{L} (X)$. If both
		\begin{align*}
			\spa{\Bigg[ \bigcup_{|\lambda| < 1} {\ker{(T - \lambda I)}} \Bigg]} \qquad \textrm{and} \qquad \spa{\Bigg[ \bigcup_{|\lambda| > 1} {\ker{(T - \lambda I)}} \Bigg]}
		\end{align*}
		are dense in $X$, then $T$ is mixing.
	\end{lem}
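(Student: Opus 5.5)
The plan is to run a Birkhoff-type argument. Put
\[
X_0=\spa\Big[\bigcup_{|\lambda|<1}\ker(T-\lambda I)\Big],\qquad Y_0=\spa\Big[\bigcup_{|\lambda|>1}\ker(T-\lambda I)\Big].
\]
Both are dense by hypothesis. Fix nonempty open sets $U,V\subseteq X$. By density, choose $x\in X_0\cap U$ and $y\in Y_0\cap V$. The aim is to produce, for every $n$ large enough, a single vector $u_n\in U$ with $T^n u_n\in V$; this is exactly what mixing requires.

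Write $x=\sum_{i=1}^k x_i$ with $Tx_i=\lambda_i x_i$ and $|\lambda_i|<1$, and $y=\sum_{j=1}^m y_j$ with $Ty_j=\mu_j y_j$ and $|\mu_j|>1$. Define a right inverse of $T$ on $Y_0$ by
\[
S_n y=\sum_{j=1}^m \mu_j^{-n}y_j .
\]
Since $T^n(\mu_j^{-n}y_j)=y_j$, we get $T^nS_ny=y$. As a candidate, take $u_n=x+S_ny$. Then
\[
T^nu_n=\sum_{i=1}^k\lambda_i^n x_i+y .
\]

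The estimates are now immediate because the sums are finite. We have $\|\sum_i\lambda_i^nx_i\|\le\sum_i|\lambda_i|^n\|x_i\|\to0$, and likewise $\|S_ny\|\le\sum_j|\mu_j|^{-n}\|y_j\|\to0$. Since $U$ and $V$ are open, there is $N$ such that for all $n\ge N$ both $u_n\in U$ (as $u_n\to x\in U$) and $T^nu_n\in V$ (as $T^nu_n\to y\in V$). Hence $T^n(U)\cap V\neq\varnothing$ for every $n\ge N$, which is mixing.

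There is no real obstacle. The one point that needs care is that the decompositions of $x$ and $y$ into eigenvectors are finite, so the limits hold in norm without any summability issue. The vector $x+S_ny$ is a legitimate element of $X$ for each fixed $n$, so no operator $S$ needs to be defined on all of $X$.
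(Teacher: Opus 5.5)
Your proof is correct and is essentially the paper's argument written out. The paper cites Bayart--Matheron to say that $T$ satisfies the Hypercyclicity Criterion along the full sequence $(n)$, using exactly your dense sets $X_0$, $Y_0$ and maps $S_n$. It then invokes the standard fact that this criterion yields mixing, and that fact is precisely your $u_n=x+S_ny$ computation.
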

	
	\begin{proof}
		According to the proof of \cite[Corollary 1.10]{BM}, $T$ satisfies the hypercyclicity criterion with respect to the sequence of positive integers $\{ 1, 2, \dots \}$. Hence, by the argument in \cite[Section 2.1]{BM}, $T$ is mixing.
	\end{proof}
	
	A further consideration of eigenvectors associated with eigenvalues of modulus $1$ gives a characterization of chaoticity.
	
	\begin{lem} \label{lemma: chaotic operators}
		Let $X$ be a separable Banach space and $T \in \mathfrak{L} (X)$, and suppose that the assumption of Lemma \ref{lemma: S-G criterion} is satisfied. If additionally the subspace
		\begin{align*}
			\spa{\Bigg[ \bigcup_{t \in \QQ} {\ker{(T - \ee^ {2 \ppi\ii t} I)}} \Bigg]}
		\end{align*}
		is dense in $X$, then $T$ is chaotic.
	\end{lem}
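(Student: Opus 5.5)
By Lemma \ref{lemma: S-G criterion}, $T$ is mixing and in particular hypercyclic. So by the definition of chaoticity, all that remains is to show that $\Per(T)$ is dense in $X$. The plan is therefore to prove that every eigenvector of $T$ for an eigenvalue $\ee^{2\ppi\ii t}$ with $t\in\QQ$ is a periodic point, and that $\Per(T)$ is closed under finite linear combinations. The density assumption in the statement then gives that $\Per(T)$ is dense.

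For the first point, let $x\in\ker(T-\ee^{2\ppi\ii t}I)$ with $t=m/n$, where $m\in\ZZ$ and $n\in\NN_+$. Then
\begin{align*}
T^ n x=\ee^{2\ppi\ii m}x=x,
\end{align*}
so $x\in\Per(T)$; the case $x=0$ is trivially periodic.

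For the second point, suppose $T^{n_1}x_1=x_1$ and $T^{n_2}x_2=x_2$, and let $\alpha,\beta\in\CC$. Set $N=n_1n_2$. Iterating gives $T^{N}x_1=x_1$ and $T^{N}x_2=x_2$, hence
\begin{align*}
T^{N}(\alpha x_1+\beta x_2)=\alpha x_1+\beta x_2.
\end{align*}
By induction, any finite linear combination of periodic points is periodic. Consequently
\begin{align*}
\spa\Bigg[\bigcup_{t\in\QQ}\ker(T-\ee^{2\ppi\ii t}I)\Bigg]\subseteq\Per(T).
\end{align*}
By hypothesis the left-hand side is dense, so $\Per(T)$ is dense in $X$. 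Combined with the hypercyclicity obtained from Lemma \ref{lemma: S-G criterion}, this shows that $T$ is chaotic.

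There is no genuine obstacle in this argument. The only step needing care is that periodic points, each with its own period, are closed under sums; taking the product of the periods as a common period handles this.
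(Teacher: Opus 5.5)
Your proof is correct and follows the paper's argument: hypercyclicity comes from the Shapiro--Godefroy criterion (Lemma \ref{lemma: S-G criterion}), and chaoticity then follows from the density of $\Per(T)$. The only difference is that the paper cites the identity $\Per(T)=\spa\bigl[\bigcup_{t\in\QQ}\ker(T-\ee^{2\ppi\ii t}I)\bigr]$ from Bayart--Matheron, whereas you verify directly the inclusion of the span in $\Per(T)$, which is the only direction needed.
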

	
	\begin{proof}
		According to \cite[Remark 6.7]{BM}, we have
		\begin{align*}
			\Per(T) = \spa{\Bigg[ \bigcup_{t \in \QQ} {\ker{(T - \ee^ {2 \ppi \ii t} I)}} \Bigg]}.
		\end{align*}
		Thus, $\Per(T)$ is dense in $X$. On the other hand, by Lemma \ref{lemma: S-G criterion}, $T$ is also mixing, and hence hypercyclic. By definition, $T$ is chaotic.
	\end{proof}
	
	Suppose that $\rd \sigma$ is a Borel probability measure on $\TT$. For an operator $T \in \mathfrak{L} (X)$, we say that $T$ has a \textit{$\sigma$-spanning set of $\TT$-eigenvectors} if for every Borel subset $A$ of $\TT$ with $\sigma (A) = 1$ the subspace
	\begin{align*}
		\spa{\Bigg[ \bigcup_{\lambda \in A} {\ker{(T - \lambda I)}} \Bigg]}
	\end{align*}
	is dense in $X$. If $T$ has a $\sigma$-spanning set of $\TT$-eigenvectors for some continuous measure $\sigma$ (i.e., $\sigma (\{ \lambda \}) = 0$ for every $\lambda \in \TT$) then we say that $T$ has a \textit{perfectly \textup{(}$\sigma$-\textup{)}spanning set of $\TT$-eigenvectors}. With this terminology established, we give a sufficient condition for frequent hypercyclicity.
	
	\begin{lem}[{\cite[Corollary 6.24]{BM}}] \label{lemma: frequently hypercyclic operators}
		Let $H$ be a separable Hilbert space and $T \in \mathfrak{L} (H)$. If $T$ has a perfectly spanning set of $\TT$-eigenvectors, then $T$ is frequently hypercyclic.
	\end{lem}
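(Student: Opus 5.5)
The plan is to prove the lemma by the Gaussian-measure method. We build a nondegenerate, $T$-invariant Gaussian measure $\mu$ on $H$ with respect to which $T$ is ergodic. Birkhoff's ergodic theorem then produces frequently hypercyclic vectors. Throughout, let $\sigma$ be a continuous Borel probability measure on $\TT$ for which $T$ has a $\sigma$-spanning set of $\TT$-eigenvectors.

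\emph{Step 1: eigenvector fields.} The set $\{(\lambda, x) \in \TT \times H : Tx = \lambda x\}$ is closed. Applying a measurable selection theorem (Kuratowski--Ryll-Nardzewski) to its sections, intersected with balls from a countable base, gives a sequence of Borel maps $E_k : \TT \to H$ with three properties:
\begin{itemize}
\item $T E_k(\lambda) = \lambda E_k(\lambda)$ for all $\lambda$;
\item $\|E_k(\lambda)\| \leqslant 1$;
\item for every $\lambda$, the vectors $\{E_k(\lambda)\}_k$ are dense in $\ker(T - \lambda I) \cap \{\|x\| \leqslant 1\}$.
\end{itemize}
This step is the main technical obstacle: producing measurable fields that still capture all eigenvectors pointwise.

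From the fields, define
\begin{align*}
K : \bigoplus_{k} L^2(\TT, \rd\sigma) \to H, \qquad K\big((f_k)_k\big) = \sum_k 2^{-k} \int_{\TT} f_k(\lambda) E_k(\lambda) \dd \sigma(\lambda).
\end{align*}
The weights $2^{-k}$ make $K$ Hilbert--Schmidt. Moreover $TK = KM$, where $M$ is the unitary operator of multiplication by $\lambda$ in each coordinate.

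\emph{Step 2: $T$-invariant Gaussian measure with full support.} Let $R = KK^*$, which is a positive trace-class operator, and let $\mu$ be the centered Gaussian measure on $H$ with covariance $R$. Since $TRT^* = KMM^*K^* = R$, the image measure $\mu \circ T^{-1}$ is Gaussian with the same covariance, so $\mu$ is $T$-invariant.

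The support of $\mu$ is $\overline{\operatorname{ran} K}$. To see that this is all of $H$, suppose $x \perp \operatorname{ran} K$. Then $\langle E_k(\lambda), x\rangle = 0$ for $\sigma$-a.e.\ $\lambda$ and every $k$. Hence there is a Borel set $A$ with $\sigma(A) = 1$ such that $x \perp \ker(T - \lambda I)$ for all $\lambda \in A$, using the pointwise density from Step 1. The $\sigma$-spanning hypothesis then forces $x = 0$.

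\emph{Step 3: weak mixing.} For a Gaussian measure, weak mixing of $T$ is equivalent to
\begin{align*}
\frac{1}{N} \sum_{n < N} \big|\langle R\, T^{*n} x, y\rangle\big|^2 \to 0 \qquad \text{for all } x, y \in H.
\end{align*}
Here $\langle R\, T^{*n} x, y\rangle = \widehat{\nu}(n)$, the $n$-th Fourier coefficient of the complex measure
\begin{align*}
\rd\nu = \sum_k 4^{-k} \langle E_k, x\rangle \overline{\langle E_k, y\rangle} \dd\sigma.
\end{align*}
This measure is absolutely continuous with respect to $\sigma$, hence continuous, so Wiener's lemma gives the required Cesàro convergence. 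In particular $T$ is ergodic with respect to $\mu$.

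\emph{Step 4: frequent hypercyclicity.} Fix a countable base of nonempty open sets $(V_j)$ of $H$. Since $\mu$ has full support, $\mu(V_j) > 0$ for every $j$. By Birkhoff's ergodic theorem, for $\mu$-a.e.\ $x$ and every $j$,
\begin{align*}
\frac{1}{N}\,\#\{n < N : T^n x \in V_j\} \to \mu(V_j) > 0 .
\end{align*}
Positive lower density of the visit times means they can be enumerated as $n_k \leqslant Ck$, so any such $x$ is a frequently hypercyclic vector for $T$.
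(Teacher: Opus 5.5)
Your argument is correct and is essentially the standard proof of the cited result: the paper gives no proof of its own but quotes Bayart--Matheron, where this corollary is obtained exactly as you do, namely by building a $T$-invariant Gaussian measure of full support from measurable $\TT$-eigenvector fields, verifying weak mixing through Wiener's lemma, and then applying Birkhoff's ergodic theorem. The one point you leave implicit is the measurability hypothesis that Kuratowski--Ryll-Nardzewski needs for $\lambda \mapsto \ker(T-\lambda I)\cap\{\|x\|\leqslant 1\}$: it holds because $\{(\lambda,x) : Tx=\lambda x,\ \|x\|\leqslant 1\}$ is compact in $\TT\times(\text{closed unit ball, weak topology})$ and every open ball is a countable union of weakly closed balls, so each set $\{\lambda : \ker(T-\lambda I)\cap\{\|x\|\leqslant 1\}\cap U\neq\varnothing\}$ with $U$ an open ball is $F_\sigma$.
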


	\noindent \textbf{2.~Toeplitz operators}
	\vspace{5pt}
	
	Throughout this paper, we fix $\{ e_j \}_{j = 0}^{\infty}$ to be the following orthonormal basis of the Bergman space:
	\begin{align*}
		e_j (z) = \sqrt{j + 1} z^ j, \qquad j \in \NN.
	\end{align*}
	The winding number of a curve $\varphi (\ee^ {\ii \theta})$ ($0 \leqslant \theta \leqslant 2 \ppi$) with respect to $\lambda \in \CC \backslash \varphi (\TT)$ will be denoted by $\wind{(\varphi (\TT), \lambda)}$. For a harmonic function $h (z) = q (\overline{z}) + p (z)$ continuous on $\overline{\DD}$, where $q$ is an analytic polynomial and $p$ is an analytic function continuous on $\overline{\DD}$, we denote the analytic extension of $h |_{\TT}$ by $\widetilde{h}$:
	\begin{align*}
		\widetilde{h} (z) = q \Big( \frac{1}{z} \Big) + p (z).
	\end{align*}
	Thus, $\widetilde{h} |_{\TT} = h |_{\TT}$, and this relation will be frequently used in the remainder of this paper.

	\vspace{5pt}
	\noindent \textit{2.1. Bergman--Toeplitz operators with monomial symbols}
	\vspace{5pt}
	
	Since we mainly deal with Toeplitz operators with harmonic symbols, the following elementary lemma which calculates Toeplitz operators with monomial symbols is useful.
	
	\begin{lem} \label{lemma: Toeplitz opeartors with monomial symbols}
		For $k \in \NN$ and $j \in \NN$, we have
		\begin{align*}
			T_{\overline{z}^ k} e_j =
			\begin{cases}
				0, & j < k, \vspace{2mm}\\
				\sqrt{\frac{j - k + 1}{j + 1}} e_{j - k}, & j \geqslant k,
			\end{cases}
		\end{align*}
		and
		\begin{align*}
			T_{z^ k} e_j = \sqrt{\frac{j + 1}{j + k + 1}} e_{j + k}, \qquad j \geqslant 0.
		\end{align*}
	\end{lem}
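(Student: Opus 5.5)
The plan is to compute directly with the integral formula (\ref{eq3}), or equivalently through inner products against the orthonormal basis $\{e_j\}$. Since $T_{\varphi} f = P(\varphi f)$ and $P$ is the orthogonal projection onto $L_a^2$, we have the expansion
\begin{align*}
T_{\varphi} e_j = \sum_{m \geqslant 0} \langle \varphi e_j, e_m \rangle e_m ,
\end{align*}
where $\langle \cdot, \cdot \rangle$ is the inner product of $L^2(\DD, \rd A)$. So the whole task reduces to evaluating $\langle z^k e_j, e_m\rangle$ and $\langle \overline{z}^k e_j, e_m\rangle$.

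The key step is the moment formula in polar coordinates, $\rd A = \frac{1}{\pi} r \dd r \dd\theta$:
\begin{align*}
\int_{\DD} z^{a}\overline{z}^{b} \dd A(z) = \frac{1}{\pi}\int_0^1\!\!\int_0^{2\pi} r^{a+b+1}\ee^{\ii(a-b)\theta}\dd\theta \dd r = \frac{\delta_{ab}}{a+1}.
\end{align*}

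For the analytic symbol, $z^k e_j = \sqrt{j+1}\, z^{j+k}$ is already in $L_a^2$, so $P$ acts trivially. Rewriting $z^{j+k} = e_{j+k}/\sqrt{j+k+1}$ gives
\begin{align*}
T_{z^k} e_j = \sqrt{\tfrac{j+1}{j+k+1}}\, e_{j+k},
\end{align*}
and no projection is needed.

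For the co-analytic symbol, the moment formula gives
\begin{align*}
\langle \overline{z}^k e_j, e_m\rangle = \sqrt{(j+1)(m+1)}\int_{\DD} z^j \overline{z}^{k+m}\dd A .
\end{align*}
This vanishes unless $j = k+m$, that is, unless $m = j-k \geqslant 0$. So $T_{\overline{z}^k} e_j = 0$ when $j<k$. When $j\geqslant k$, the only surviving coefficient is
\begin{align*}
\frac{\sqrt{(j+1)(j-k+1)}}{j+1} = \sqrt{\tfrac{j-k+1}{j+1}},
\end{align*}
which is the claimed formula.

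There is no real obstacle here. The only point needing care is justifying the coefficient expansion of $P(\overline{z}^k e_j)$, which follows because $\{e_m\}$ is an orthonormal basis of $L_a^2$ and $P$ is the orthogonal projection onto it. The rest is bookkeeping of the normalization constants.
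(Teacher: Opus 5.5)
Your proof is correct and is essentially the paper's argument: the paper expands the Bergman kernel as $(1 - z\overline{w})^{-2} = \sum_i (i+1) z^i \overline{w}^i$ and evaluates the same moment integrals, which is your expansion $P f = \sum_m \langle f, e_m \rangle e_m$ written in kernel form. Your observation that $P$ acts trivially on $z^k e_j$ shortens the analytic case slightly, and your explicit computation for $\overline{z}^k$ fills in the case the paper leaves to the reader.
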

	
	\begin{proof}
		Using (\ref{eq3}), for $k \in \NN$ and $j \in \NN$ we have
		\begin{align*}
			T_{z^ k} e_j (z) & = \int_{\DD} \frac{e_j (w) w^ k}{(1 - z \overline{w})^ 2} \dd A(w) = \sqrt{j + 1} \sum_{i = 0}^ {\infty} {(i + 1) z^ i \int_{\DD} w^ {j + k} \overline{w}^ i \dd A(w)} \\
			& = \sqrt{j + 1} (j + k + 1) z^ {j + k} \int_{\DD} |w|^ {2 (j + k)} \dd A (w) = \sqrt{\frac{j + 1}{j + k + 1}} e_{j + k} (z).
		\end{align*}
		The calculation of $T_{\overline{z}^ k} e_j$ is similar, so we omit the details.
	\end{proof}

	\noindent \textit{2.2. Spectrum and its connectedness}
	\vspace{5pt}
	
	For a bounded operator $A$ on a Hilbert space, we use $\sigma (A)$ and $\sigma_{\mathrm{e}} (A)$ to denote its spectrum and essential spectrum, respectively. Let $\Hol (\overline{\DD})$ be the space of functions analytic on $\overline{\DD}$, i.e., $f \in \Hol (\overline{\DD})$ if and only if there exists some open set $U$ containing $\overline{\DD}$ such that $f$ is analytic on $U$. The following characterization of the spectra of Toeplitz operators plays a foundational role in our analysis.
	
	\begin{thm}[{\cite{GZZ}}] \label{theorem: spectrum and essential spectrum}
		Let $p \in \Hol (\overline{\DD}) $, $a \in \CC \backslash \{ 0 \}$, and $\varphi (z) = a \overline{z} + p(z)$. Then:
		\begin{enumerate}[label = \textup{(\alph*)}]
			\item $\sigma_{\mathrm{e}} (T_{\varphi}) = \varphi (\TT)$.
			\item For every $\lambda \notin \varphi (\TT)$,
			\begin{align*}
				\ind{(T_{\varphi} - \lambda I)} = -\wind{(\varphi (\TT), \lambda)}.
			\end{align*}
			\item For every $\lambda \notin \varphi (\TT)$, $\lambda$ is an eigenvalue of $T_{\varphi}$ if and only if either $\widetilde{\varphi} (z) - \lambda$ is non-vanishing on $\DD_0$ or $\widetilde{\varphi} (z) - \lambda$ has finitely many simple zeros $\{ z_1, \dots, z_k \}$ in $\DD_0$ which satisfy
			\begin{align*}
				\frac{z_j^ 2 p' (z_j)}{a} = \frac{n_j + 2}{n_j + 1}
			\end{align*}
			for some $n_j \in \NN$, where $j = 1, 2, \dots, k$.
		\end{enumerate}
	\end{thm}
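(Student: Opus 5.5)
We will treat the three parts separately. Parts (a) and (b) are Fredholm-theoretic and should follow from the structure of the Toeplitz algebra on $L_a^2$. Part (c) is an explicit computation: the eigenvalue equation turns into a first-order linear ODE.

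\emph{Parts (a) and (b).} Since $\varphi$ is continuous on $\overline{\DD}$, the operator $T_\varphi$ belongs to the $C^*$-algebra generated by Bergman--Toeplitz operators with continuous symbols. Two standard facts hold there. First, for $u,v\in C(\overline{\DD})$ the semicommutators $T_uT_v-T_{uv}$ are compact. Second, $T_u$ is compact whenever $u|_{\TT}=0$. Together they give Coburn's short exact sequence
\begin{align*}
0\to\mathcal K\to\mathcal T(C(\overline{\DD}))\to C(\TT)\to 0,
\end{align*}
where the quotient map is $T_u\mapsto u|_{\TT}$. Consequently $T_\varphi-\lambda I$ is Fredholm if and only if $\varphi-\lambda$ does not vanish on $\TT$, which is (a). For (b), the index is a homotopy invariant on the invertible elements of $C(\TT)$. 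It therefore suffices to check it on the generator $z$: $T_z$ is a weighted shift with index $-1$ by Lemma \ref{lemma: Toeplitz opeartors with monomial symbols}. This gives $\ind(T_\varphi-\lambda I)=-\wind(\varphi(\TT),\lambda)$.

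\emph{Part (c): reduction to an ODE.} From Lemma \ref{lemma: Toeplitz opeartors with monomial symbols}, $T_{\overline z}z^j=\frac{j}{j+1}z^{j-1}$. Hence, writing $F(z)=\int_0^z f(w)\dd w$, we get
\begin{align*}
T_{\overline z}f=\frac{zF'(z)-F(z)}{z^2}.
\end{align*}
Since $p\in\Hol(\overline{\DD})$, we also have $T_pf=pf$. So $T_\varphi f=\lambda f$ is equivalent to
\begin{align*}
F'(z)\big(\widetilde\varphi(z)-\lambda\big)=\frac{aF(z)}{z^2},\qquad F(0)=0.
\end{align*}
Away from the zeros of $\widetilde\varphi-\lambda$, this has the solutions $F=C\exp\int \frac{a}{z^2(\widetilde\varphi(z)-\lambda)}\dd z$.

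Near $z=0$ we have $z^2(\widetilde\varphi-\lambda)=z\big(a+z(p-\lambda)\big)$. The integrand is therefore $\frac1z+{}$(holomorphic), so $F=z\,g(z)$ with $g$ holomorphic and $g(0)\ne0$, which is consistent with $F(0)=0$. Because $\lambda\notin\varphi(\TT)=\widetilde\varphi(\TT)$, the function $\widetilde\varphi-\lambda$ has no zeros in an annulus around $\TT$. It thus has only finitely many zeros in $\DD_0$. If $F$ extends holomorphically across all of them, then $F$ is holomorphic on a neighbourhood of $\overline{\DD}$. Then $f=F'$ is bounded, so $f\in L_a^2$ and $f\neq0$, i.e. $\lambda$ is an eigenvalue. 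Conversely, any eigenvector produces a holomorphic $F$ on $\DD$ solving the ODE, hence one of this form.

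\emph{Part (c): local analysis at the zeros.} At a simple zero $z_j$ we have $\widetilde\varphi'(z_j)=-a/z_j^2+p'(z_j)$. The integrand then has a simple pole with residue
\begin{align*}
r_j=\frac{a}{z_j^2\widetilde\varphi'(z_j)}=\frac{1}{z_j^2p'(z_j)/a-1}.
\end{align*}
The exponential $\exp\int$ is single-valued and holomorphic near $z_j$ exactly when $r_j\in\ZZ$. A negative $r_j$ would give a pole of $F$, and $r_j=0$ is impossible. So the condition is $r_j=n_j+1$ with $n_j\in\NN$, which is precisely $\frac{z_j^2p'(z_j)}{a}=\frac{n_j+2}{n_j+1}$. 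At a zero of order $m\ge2$, the integrand has a pole of order $m+1\ge 2$ whose leading coefficient is $a/(z_j^2c)\ne0$, where $c$ is the leading Taylor coefficient of $\widetilde\varphi-\lambda$ at $z_j$. Then $\exp\int$ has an essential singularity at $z_j$, so no eigenvector exists.

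\emph{Expected difficulty.} The main care is needed in two places. The first is the global single-valuedness of $\exp\int$ on $\DD$ minus the zeros; this is settled by the integrality of all residues, with $0$ contributing residue $1$. The second is the Fredholm machinery behind (a) and (b); there I would rely on the known Coburn-type theory for Bergman--Toeplitz operators with symbols continuous on $\overline{\DD}$, as in \cite{GZZ}, rather than reprove it.
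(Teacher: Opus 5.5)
The paper gives no proof of this theorem. It is quoted from \cite{GZZ} and used as a black box, so there is no internal argument to compare yours with. Judged on its own, your proposal is correct. For (a) and (b), the exact sequence $0\to\mathcal{K}\to\mathcal{T}(C(\overline{\DD}))\to C(\TT)\to 0$ for the Bergman space gives exactly what is claimed, once you add three standard facts: inverse-closedness of the $C^*$-subalgebra $\mathcal{T}/\mathcal{K}$ in the Calkin algebra, homotopy invariance of the index, and $\ind T_z=-1$. For (c), you substitute $F=\int_0^z f$ and use $T_{\overline{z}}f=(zF'-F)/z^2$. This turns $T_\varphi f=\lambda f$ into $z^2(\widetilde{\varphi}-\lambda)F'=aF$, and that equation explains the condition $z_j^2p'(z_j)/a=(n_j+2)/(n_j+1)$. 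It says precisely that the residue $a/(z_j^2\widetilde{\varphi}'(z_j))$ equals a positive integer $n_j+1$, while the residue at $0$ is always $1$. The sufficiency direction is also handled correctly. Since $\lambda\notin\varphi(\TT)$, there are no zeros of $\widetilde{\varphi}-\lambda$ in an annulus about $\TT$. So integrality of all residues makes $F$ holomorphic on a disc of radius $>1$, and then $F'\in L_a^2$.

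There is one slip. At a zero $z_j\neq 0$ of order $m$ of $\widetilde{\varphi}-\lambda$, the function $a/(z^2(\widetilde{\varphi}-\lambda))$ has a pole of order $m$, not $m+1$, because the factor $z^2$ does not vanish there. Since $m\geq 2$, your conclusion is unaffected.

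The necessity half can be stated more cleanly in one stroke. If $f\neq 0$ is an eigenvector, then $F\not\equiv 0$ is holomorphic on $\DD$. Hence $F'/F$ is a logarithmic derivative: its only poles are simple, sit at zeros of $F$, and have positive integer residues. Because $F'/F=a/(z^2(\widetilde{\varphi}-\lambda))$, this rules out multiple zeros of $\widetilde{\varphi}-\lambda$ and forces each $r_j\in\NN_+$, with no separate essential-singularity argument needed. Your ODE formulation also makes two facts immediate: each eigenspace is one-dimensional (a first-order linear ODE), and $f_\lambda=F'$ has an explicit closed form. The paper instead extracts one-dimensionality from the coefficient recurrence in Lemma \ref{lemma: recurrence relation}.
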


	Let $a \in \CC \backslash \{ 0 \}$ and $p \in \Hol (\overline{\DD})$. According to Theorem \ref{theorem: spectrum and essential spectrum}, the spectrum of $T_{\varphi}$ with $\varphi (z) = a \overline{z} + p(z)$  can be divided into three parts:
	\begin{align*}
		\sigma (T_{\varphi} )& = \varphi (\TT) \sqcup \{ \lambda \notin \varphi (\TT) : \wind{(\varphi (\TT), \lambda)} \neq 0 \} \\
		& \quad \, \sqcup \{ \lambda \notin \varphi (\TT) : \wind{(\varphi (\TT), \lambda)} = 0 \ \textrm{ and } \ \ker{(T_{\varphi} - \lambda)} \neq \{ 0  \} \}.
	\end{align*}
	Let
	\begin{align*}
		\Lambda_{\mathrm{c}} (T_{\varphi}) = \varphi (\TT) \sqcup \{ \lambda \notin \varphi (\TT) : \wind{(\varphi (\TT), \lambda)} \neq 0 \}
	\end{align*}
	and
	\begin{align*}
		\Lambda_{\mathrm{d}} (T_{\varphi}) = \{ \lambda \notin \varphi (\TT) : \wind{(\varphi (\TT), \lambda)} = 0 \ \textrm{ and } \ \ker{(T_{\varphi} - \lambda I)} \neq \{ 0  \} \}.
	\end{align*}
	$\Lambda_{\mathrm{c}} (T_{\varphi})$ is clearly a connected subset of $\sigma (T_{\varphi})$. As for $\Lambda_{\mathrm{d}} (T_{\varphi})$, we have:
	
	\begin{lem} \label{lemma: Lambdad is discrete}
		Every point in $\Lambda_{\mathrm{d}} (T_{\varphi})$ is an isolated point of $\sigma (T_{\varphi})$. Consequently, $\Lambda_{\mathrm{c}} (T_{\varphi})$ is a connected component of $\sigma (T_{\varphi})$.
	\end{lem}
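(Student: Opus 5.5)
Fix $\lambda_0 \in \Lambda_{\mathrm{d}}(T_\varphi)$. The plan is to show that $T_\varphi - \lambda$ is invertible for all $\lambda \neq \lambda_0$ in a small disk around $\lambda_0$, using Fredholm theory.

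First, since $\varphi(\TT)$ is compact and $\lambda_0 \notin \varphi(\TT)$, there is $r>0$ with $D(\lambda_0,r)\cap\varphi(\TT)=\varnothing$. Hence $D(\lambda_0,r)$ lies in a single component of $\CC\backslash\varphi(\TT)$. On that disk the winding number is constant, so $\wind(\varphi(\TT),\lambda)=0$. By Theorem \ref{theorem: spectrum and essential spectrum}(a),(b), $T_\varphi-\lambda$ is Fredholm of index $0$ for every $\lambda\in D(\lambda_0,r)$. Therefore $\lambda\in\sigma(T_\varphi)$ iff $\ker(T_\varphi-\lambda I)\neq\{0\}$, and we must show eigenvalues cannot accumulate at $\lambda_0$.

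Next, I use analytic Fredholm theory: the function $\lambda\mapsto T_\varphi-\lambda I$ is analytic and Fredholm on the connected open set $\Omega$, the component of $\CC\backslash\varphi(\TT)$ containing $\lambda_0$. This component touches the unbounded component of the complement of the spectrum, or at least contains resolvent points. Indeed, the set of $\lambda\in\Omega$ where $T_\varphi-\lambda$ fails to be invertible is either all of $\Omega$ or a discrete subset of $\Omega$ (the standard dichotomy, e.g.\ via the local constancy of $\dim\ker$ off a discrete set together with index $0$). So it suffices to exhibit one point of $\Omega$ that is not an eigenvalue.

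For this I use Theorem \ref{theorem: spectrum and essential spectrum}(c). The condition $z_j^2p'(z_j)/a=(n_j+2)/(n_j+1)$ at all zeros of $\widetilde\varphi-\lambda$ in $\DD_0$ is very restrictive, and it fails for generic $\lambda$. If $\Omega$ is the unbounded component, large $|\lambda|$ is a resolvent point anyway. Otherwise, for a bounded component with winding number $0$, $\partial\Omega\subset\widetilde\varphi(\TT)$. Choose $\lambda\in\Omega$ close to the boundary point $\widetilde\varphi(\zeta)$, $\zeta\in\TT$, where $\widetilde\varphi'(\zeta)\neq 0$. Then $\widetilde\varphi-\lambda$ has a simple zero $z$ near $\zeta$, and $z$ can be taken inside $\DD$ if the orientation is suitable. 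If not, zeros stay outside and we get a counting argument, using zero count $=$ winding $+1$ from the pole at $0$; since the winding number is $0$, $\widetilde\varphi-\lambda$ has exactly one zero in $\DD_0$ for every $\lambda\in\Omega$. This zero $z(\lambda)$ depends holomorphically on $\lambda$ and is nonconstant. The eigenvalue condition then requires $z(\lambda)^2p'(z(\lambda))/a$ to lie in the discrete set $\{(n+2)/(n+1)\}$. That set is countable, and the map $\lambda\mapsto z(\lambda)^2p'(z(\lambda))$ is holomorphic on $\Omega$. If it is nonconstant, its preimage of this countable set is countable, so eigenvalues are not all of $\Omega$. If it is constant and equal to some $a(n+2)/(n+1)$, then $z^2p'(z)$ is constant on an open set of $z$, forcing $p'=c/z^2$, which is impossible since $p$ is analytic at $0$, unless $c=0$; and $c=0$ contradicts $(n+2)/(n+1)\neq0$. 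Hence the eigenvalues in $\Omega$ form a discrete set, so $\lambda_0$ is isolated.

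The main obstacle is the claim that $\widetilde\varphi-\lambda$ has exactly one zero in $\DD_0$ for winding number $0$. This follows from the argument principle applied to $\widetilde\varphi-\lambda$ on $\TT$: one pole at $0$, and zeros minus poles equals winding. Zeros of order $2$ would be excluded by (c) anyway.

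Finally, every point of $\Lambda_{\mathrm{d}}$ is isolated in $\sigma(T_\varphi)$. Since $\sigma(T_\varphi)=\Lambda_{\mathrm c}\sqcup\Lambda_{\mathrm d}$ and $\Lambda_{\mathrm c}$ is connected, any connected set strictly containing $\Lambda_{\mathrm c}$ would have to contain a point of $\Lambda_{\mathrm d}$, which is impossible since singletons $\{\lambda_0\}$ are open in $\sigma(T_\varphi)$. So $\Lambda_{\mathrm c}$ is a component.
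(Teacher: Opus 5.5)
Your proof is correct, but the discreteness step runs differently from the paper's.

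The paper works purely locally. Winding number $0$ gives a unique zero $z_0\in\DD_0$ of $\widetilde\varphi-\lambda_0$, and the eigenvalue condition gives $\widetilde\varphi'(z_0)=\frac{a}{(m_0+1)z_0^2}\neq 0$. So $\widetilde\varphi$ has a local inverse near $z_0$, and $G(\lambda)=[(\widetilde\varphi)^{-1}(\lambda)]^2p'((\widetilde\varphi)^{-1}(\lambda))/a$ is holomorphic and nonconstant near $\lambda_0$. Since $G(\lambda_0)=\frac{m_0+2}{m_0+1}$ is an isolated point of $S$ ($S$ accumulates only at $1$), $G$ misses $S$ on a punctured neighborhood of $\lambda_0$.

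You instead work on the whole component $\Omega$. You apply the analytic Fredholm (punctured-neighborhood) dichotomy to the index-zero family $T_\varphi-\lambda I$, which reduces everything to finding a single non-eigenvalue in $\Omega$. You get that from countability of $S$ and nonconstancy of $G$. This frees you from the fine structure of $S$, at the cost of importing a nontrivial operator-theoretic theorem; the paper's route is elementary and self-contained.

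Three points of hygiene:
\begin{enumerate}
\item The paragraph about picking $\lambda$ near a boundary point and the ``orientation'' is muddled and unnecessary. The winding-zero count already gives exactly one zero, hence a simple one, of $\widetilde\varphi-\lambda$ in $\DD_0$ for every $\lambda\in\Omega$.
\item $S$ is countable but not discrete, since it accumulates at $1$. So your sentence ``hence the eigenvalues in $\Omega$ form a discrete set'' follows from the Fredholm dichotomy, not from countability.
\item You could drop Fredholm theory entirely. At the simple zero $z(\lambda)$ one has $\widetilde\varphi'(z(\lambda))=\frac{a}{z(\lambda)^2}\bigl(G(\lambda)-1\bigr)\neq 0$, so $G\neq 1$ on $\Omega$. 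Then $G^{-1}(S)$ is directly discrete in $\Omega$, which is essentially the paper's argument.
\end{enumerate}

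Your final paragraph deducing that $\Lambda_{\mathrm c}(T_\varphi)$ is a component is correct, and it spells out what the paper leaves implicit.
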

	
	\begin{proof}
		It suffices to consider the case that $\Lambda_{\mathrm{d}} (T_{\varphi})$ is nonempty. Pick a $\lambda_0 \in \Lambda_{\mathrm{d}} (T_{\varphi})$. By the argument principle, the equation
		\begin{align*}
			\widetilde{\varphi} (z) = \frac{a}{z} + p (z) = \lambda_0
		\end{align*}
		has a unique root $z_0$ in $\DD_0$. Since $\lambda_0$ is an eigenvalue of $T_{\varphi}$, according to (c) of Theorem \ref{theorem: spectrum and essential spectrum}, we must have
		\begin{align*}
			\frac{z_0^ 2 p' (z_0)}{a} = \frac{m_0 + 2}{m_0 + 1}
		\end{align*}
		for some $m_0 \in \NN$. As a consequence,
		\begin{align*}
			(\widetilde{\varphi})' (z_0) = -\frac{a}{z_0^ 2} + p' (z_0) = -\frac{a}{z_0^ 2} + \frac{m_0 + 2}{m_0 + 1} \frac{a}{z_0^ 2} = \frac{1}{m_0 + 1} \frac{a}{z_0^ 2} \neq 0,
		\end{align*}
		and hence by the inverse function theorem for analytic functions, $\widetilde{\varphi}$ has an analytic inverse $(\widetilde{\varphi})^ {-1}$ on a neighborhood of $z_0$. Now, let $r > 0$ be a small number such that
		\begin{enumerate}[label = (\roman*)]
			\item $D (z_0, r) \subseteq \DD_0$,
			\item $\widetilde{\varphi}$ has an analytic inverse on $D (z_0, r)$, and
			\item $U := \widetilde{\varphi} (D (z_0, r))$ lies in the connected component of $\CC \backslash \varphi (\TT)$ that contains $\lambda_0$.
		\end{enumerate}
		By Condition (iii), we have
		\begin{align}
			\wind{(\varphi (\TT), \lambda)} = \wind{(\varphi (\TT), \lambda_0)} = 0, \label{eq5}
		\end{align}
		and hence by the argument principle, the equation
		\begin{align*}
			\widetilde{\varphi} (z) = \frac{a}{z} + p(z) = \lambda
		\end{align*}
		has $(\widetilde{\varphi})^ {-1} (\lambda)$ as its unique simple root on $\DD_0$.
		According to (\ref{eq5}) and Theorem \ref{theorem: spectrum and essential spectrum}, $\lambda \in U$ is in $\sigma (T_{\varphi})$ if and only if
		\begin{align}
			G (\lambda) = \frac{\big[ (\widetilde{\varphi})^ {-1} (\lambda) \big]^ 2 p' \big( (\widetilde{\varphi})^ {-1} (\lambda) \big)}{a} = \frac{m + 2}{m + 1} \label{eq6}
		\end{align}
		for some $m \in \NN$. Note that
		\begin{align*}
			G (\lambda_0) = \frac{z_0^ 2 p' (z_0)}{a} = \frac{m_0 + 2}{m_0 + 1}
		\end{align*}
		as $\widetilde{\varphi} (z_0) = \lambda_0$. Since $G$ is non-constant, we can find a sufficiently small neighborhood $V \subseteq U$ containing $\lambda_0$ such that
		\begin{align*}
			\left\{ G (\lambda) : \lambda \in V \backslash \{ \lambda_0 \} \right\} \cap \left\{ \frac{j + 2}{j + 1} : j \in \NN \right\} = \varnothing.
		\end{align*}
		Therefore, every $\lambda \in V \backslash \{ \lambda_0 \}$ is not in $\sigma (T_{\varphi})$, since (\ref{eq6}) does not hold for these $\lambda$, yielding that $\lambda_0$ is an isolated point of $\sigma (T_{\varphi})$.
	\end{proof}
	
	As a result of the above lemma, we find that:
	
	\begin{lem} \label{lemma: connectedness}
		Let $p \in \Hol (\overline{\DD})$, $a \in \CC \backslash \{ 0 \}$, and $\varphi (z) = a \overline{z} + p(z)$. Then $\sigma (T_{\varphi})$ is connected if and only if $\Lambda_{\mathrm{d}} (T_{\varphi}) = \varnothing$.
	\end{lem}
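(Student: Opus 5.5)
The plan is to prove both directions directly from the decomposition $\sigma(T_{\varphi}) = \Lambda_{\mathrm{c}}(T_{\varphi}) \sqcup \Lambda_{\mathrm{d}}(T_{\varphi})$ obtained from Theorem \ref{theorem: spectrum and essential spectrum}, together with Lemma \ref{lemma: Lambdad is discrete}.

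\emph{Sufficiency.} Assume $\Lambda_{\mathrm{d}}(T_{\varphi}) = \varnothing$. Then $\sigma(T_{\varphi}) = \Lambda_{\mathrm{c}}(T_{\varphi})$. I will check that this set is connected, since the paper only asserts it. The curve $\varphi(\TT)$ is connected, being the continuous image of $\TT$. Let $W$ be a connected component of $\CC \backslash \varphi(\TT)$ on which the winding number is nonzero. Such a $W$ is a bounded component, since the winding number vanishes on the unbounded one. Its boundary is a nonempty subset of $\varphi(\TT)$. Hence $\overline{W} \subseteq W \cup \varphi(\TT)$, and $\overline{W}$ is connected and meets $\varphi(\TT)$. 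The set $\Lambda_{\mathrm{c}}(T_{\varphi})$ is the union of $\varphi(\TT)$ with all such $W$. It is therefore a union of connected sets, each meeting the connected set $\varphi(\TT)$, and so it is connected.

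\emph{Necessity.} Suppose instead that $\Lambda_{\mathrm{d}}(T_{\varphi}) \neq \varnothing$, and pick $\lambda_0$ in it. By Lemma \ref{lemma: Lambdad is discrete}, $\lambda_0$ is an isolated point of $\sigma(T_{\varphi})$. Thus $\{\lambda_0\}$ is relatively open in $\sigma(T_{\varphi})$. It is also closed, and it is a proper subset because $\varphi(\TT) \subseteq \sigma(T_{\varphi})$ is nonempty and does not contain $\lambda_0$. This separates the spectrum, so $\sigma(T_{\varphi})$ is disconnected.

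There is no real obstacle. The only point needing care is the connectedness of $\Lambda_{\mathrm{c}}(T_{\varphi})$, which reduces to the standard fact that the boundary of each bounded complementary component lies in the curve $\varphi(\TT)$.
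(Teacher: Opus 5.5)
Your proof is correct and follows the paper's route. The paper deduces this lemma directly from Lemma \ref{lemma: Lambdad is discrete} (points of $\Lambda_{\mathrm{d}}(T_{\varphi})$ are isolated in $\sigma(T_{\varphi})$) together with the connectedness of $\Lambda_{\mathrm{c}}(T_{\varphi})$, which it simply calls clear. Your argument that each component $W$ with nonzero winding number satisfies $\overline{W} \subseteq W \cup \varphi(\TT)$ with $\overline{W}$ meeting $\varphi(\TT)$ just supplies that omitted detail; strictly, it is the closures $\overline{W}$, not the $W$ themselves, that meet $\varphi(\TT)$, and your setup already accounts for this.
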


	\noindent \textit{2.3. The univalent case}
	\vspace{5pt}

	We still let $\varphi (z) = a \overline{z} + p (z)$, where $a \in \CC \backslash \{ 0 \}$ and $p \in \Hol (\overline{\DD})$, and assume additionally that $\widetilde{\varphi} (z) = a z^ {-1} + p (z)$ is univalent on $\DD_0$. Let $\Omega_{\infty}$ be the unbounded connected component of $\CC \backslash \varphi (\TT)$ and $\{ \Omega_j \}_{j \geqslant 0}$ the collection of bounded connected components of $\CC \backslash \varphi (\TT)$.
	
	\begin{lem} \label{lemma: univalent}
		Let $a \in \CC \backslash \{ 0 \}$, $p \in \Hol (\overline{\DD})$, and $\varphi (z) = a \overline{z} + p(z)$. Suppose that $\widetilde{\varphi}$ is univalent on $\DD_0$. Then:
		\begin{enumerate}[label = \textup{(\alph*)}]
			\item $\widetilde{\varphi}$ maps $\DD_0$ onto $\Omega_{\infty}$ conformally.
			\item $\wind{(\varphi (\TT), \lambda)} = -1$ for every $\lambda \in \bigsqcup\limits_{j \geqslant 0} {\Omega_j}$.
			\item $\Lambda_{\mathrm{c}} (T_{\varphi}) = \CC \backslash \Omega_{\infty} = \varphi (\TT) \sqcup \big( \bigsqcup\limits_{j \geqslant 0} {\Omega_j} \big)$.
		\end{enumerate}
	\end{lem}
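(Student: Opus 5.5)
We plan to prove (a) first, using the topology of the extended plane together with the argument principle, and then read off (b) and (c) from (a).

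For (a), note that $\widetilde{\varphi}$ is meromorphic on a neighborhood of $\overline{\DD}$. Its only singularity there is a simple pole at $0$, and it agrees with $\varphi$ on $\TT$. Since $\widetilde{\varphi}$ is univalent on $\DD_0$, it extends to a univalent meromorphic map from $\DD$ into the Riemann sphere, with $\widetilde{\varphi}(0)=\infty$, so $W:=\widetilde{\varphi}(\DD)$ is open. We claim that $\widetilde{\varphi}(\DD_0)\cap\varphi(\TT)=\varnothing$. Suppose instead that $\widetilde{\varphi}(z_1)=\varphi(\zeta)=\widetilde{\varphi}(\zeta)$ for some $z_1\in\DD_0$ and $\zeta\in\TT$. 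By continuity, for $r<1$ close to $1$ the point $\widetilde{\varphi}(r\zeta)$ is near $\widetilde{\varphi}(\zeta)$. Because $\widetilde{\varphi}$ is open at $z_1$, the value $\widetilde{\varphi}(r\zeta)$ is then also attained near $z_1$. If $\widetilde{\varphi}'(\zeta)\ne0$ this gives a second preimage in $\DD_0$ and contradicts univalence. If $\widetilde{\varphi}'(\zeta)=0$, a small disk around $\zeta$ is mapped at least $2$-to-$1$, and its part inside $\DD$ already forces non-injectivity. This case is routine via a local $k$-to-$1$ normal form, so we choose a disk sector inside $\DD$ to see it.

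Given the claim, $\widetilde{\varphi}(\DD_0)$ is a connected open subset of $\CC\setminus\varphi(\TT)$, and it is unbounded because of the pole. Hence it lies in $\Omega_\infty$. To show surjectivity, we use that $\widetilde{\varphi}$ extends continuously to $\overline{\DD}$, so $\partial\widetilde{\varphi}(\DD_0)\cap\CC\subseteq\varphi(\TT)$. Thus $\widetilde{\varphi}(\DD_0)$ is relatively closed in $\Omega_\infty$. Since it is also open and nonempty, and $\Omega_\infty$ is connected, it equals $\Omega_\infty$. We expect this claim, together with the boundary-behavior argument, to be the main obstacle; the rest is bookkeeping.

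For (b), take $\lambda$ in a bounded component $\Omega_j$. We apply the argument principle to the meromorphic function $\widetilde{\varphi}-\lambda$ on $\overline{\DD}$, which has no zeros on $\TT$. This gives
\begin{align*}
\wind(\varphi(\TT),\lambda)=Z-P,
\end{align*}
where $Z$ is the number of zeros of $\widetilde{\varphi}-\lambda$ in $\DD$ and $P$ the number of its poles there. The only pole is the simple pole at $0$, so $P=1$. By (a), $\widetilde{\varphi}(\DD_0)=\Omega_\infty$, which does not contain $\lambda$, so $Z=0$. Hence the winding number equals $-1$. As a consistency check, for $\lambda\in\Omega_\infty$ univalence gives exactly one simple zero, so the winding number is $0$.

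For (c), recall $\Lambda_{\mathrm c}(T_\varphi)=\varphi(\TT)\sqcup\{\lambda\notin\varphi(\TT):\wind(\varphi(\TT),\lambda)\neq0\}$. By (b) and the check above, the winding number is nonzero exactly on $\bigsqcup_j\Omega_j$. Therefore $\Lambda_{\mathrm c}(T_\varphi)=\varphi(\TT)\sqcup\bigsqcup_j\Omega_j=\CC\setminus\Omega_\infty$.
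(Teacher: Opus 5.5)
Your proof is correct in substance and follows the paper closely. Three pieces are the same as in the paper. First, disjointness $\widetilde{\varphi}(\DD_0)\cap\varphi(\TT)=\varnothing$, via openness at $z_1$ plus continuity at $\zeta$. Second, the inclusion $\widetilde{\varphi}(\DD_0)\subseteq\Omega_\infty$, via connectedness and unboundedness. Third, parts (b) and (c), via the argument principle with the single simple pole at $0$.

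The one real difference is surjectivity in (a). The paper uses $\wind(\varphi(\TT),\lambda)=0$ on $\Omega_\infty$ and the argument principle to produce a zero of $\widetilde{\varphi}-\lambda$ in $\DD_0$. You instead show that $\widetilde{\varphi}(\DD_0)$ is open and relatively closed in the connected set $\Omega_\infty$. For this you use that finite boundary points of $\widetilde{\varphi}(\DD_0)$ lie in $\varphi(\TT)$: a sequence $z_n\in\DD_0$ with convergent images cannot tend to $0$, and cannot tend to a point of $\DD_0$ without producing an interior point. Your route is purely topological and avoids winding numbers in (a). The paper's route directly gives the preimage count, which you need anyway for the winding-number-zero check in (c).

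One step should be repaired. In the disjointness claim you split cases according to whether $\widetilde{\varphi}'(\zeta)$ vanishes, but your first-case argument never uses $\widetilde{\varphi}'(\zeta)\neq 0$. Choose $r_0$ with $\overline{D(z_1,r_0)}\subseteq\DD_0$. Then choose $r<1$ so close to $1$ that $r\zeta\notin D(z_1,r_0)$ and $\widetilde{\varphi}(r\zeta)\in\widetilde{\varphi}(D(z_1,r_0))$. The resulting preimage in $D(z_1,r_0)$ and $r\zeta$ are then automatically distinct points of $\DD_0$. This is precisely the paper's argument, and it covers all cases.

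Your justification for the degenerate case, on the other hand, is false: a critical point on $\TT$ does not by itself force non-injectivity on the part of a small disk lying inside $\DD$. For instance, take $a=1$ and $p(z)=\frac{1}{2-z}\in\Hol(\overline{\DD})$. Then $\widetilde{\varphi}(z)=\frac{2}{z(2-z)}$ is the reciprocal of $z-\frac{z^2}{2}$. That function is univalent on $\DD$, since equal values at distinct points force $z_1+z_2=2$, and it is zero-free on $\DD_0$. Hence $\widetilde{\varphi}$ is univalent on $\DD_0$ even though $\widetilde{\varphi}'(1)=0$. So drop the case distinction and the appeal to the local $k$-to-$1$ normal form.
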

	
	\begin{proof}
		We first prove (a). We assert that
		\begin{align}
			\widetilde{\varphi} (\DD_0) \cap \widetilde{\varphi} (\TT) = \varnothing. \label{eq11}
		\end{align}
		Otherwise, there exist $z_0 \in \DD_0$ and $z_1 \in \TT$ such that
		\begin{align*}
			\widetilde{\varphi} (z_0) = \widetilde{\varphi} (z_1) =: \xi.
		\end{align*}
		Let $r_0$ be a small positive number such that $D (z_0, r_0) \subseteq \DD_0$. By the open mapping theorem of analytic functions, $\widetilde{\varphi} (D (z_0, r_0))$ is an open neighborhood of $\xi$. Then by continuity of $\widetilde{\varphi}$, there exists an $r_1 > 0$ such that
		\begin{align}
			\widetilde{\varphi} (D (z_1, r_1)) \subseteq \widetilde{\varphi} (D (z_0, r_0)). \label{ineq2}
		\end{align}
		We may assume that $r_1$ is sufficiently small such that
		\begin{align}
			D (z_0, r_0) \cap D (z_1, r_1) = \varnothing. \label{eq12}
		\end{align}
		Now using (\ref{ineq2}), we can find $w_0 \in D (z_0, r_0)$, $w_1 \in D (z_1, r_1) \cap \DD_0$, such that $\widetilde{\varphi} (w_0) = \widetilde{\varphi} (w_1)$. By (\ref{eq12}), $w_0 \neq w_1$, so we find that $\widetilde{\varphi}$ is not univalent on $\DD_0$, which contradicts the hypotheses.
		
		The set $\widetilde{\varphi} (\DD_0)$ is connected (since $\DD_0$ is connected and $\widetilde{\varphi}$ is continuous), unbounded and contained to $\CC \backslash \varphi (\TT)$ (due to (\ref{eq11})). Hence it is contained in $\Omega_{\infty}$. On the other hand, we also have by the basic topological fact
		\begin{align*}
			\wind{(\varphi (\TT), \lambda)} = 0 \qquad (\lambda \in \Omega_{\infty})
		\end{align*}
		and hence by the argument principle that $\Omega_{\infty} \subseteq \widetilde{\varphi} (\DD_0)$. We conclude that
		\begin{align*}
			\widetilde{\varphi} (\DD_0) = \Omega_{\infty}.
		\end{align*}
		Since $\widetilde{\varphi}$ is univalent on $\DD_0$, it maps $\DD_0$ onto $\Omega_{\infty}$ conformally.
		
		Next, let us prove (b). Let $j \geqslant 0$ and pick an arbitrary $\lambda \in \Omega_j$. Since
		\begin{align*}
			\widetilde{\varphi} (\DD_0) \cap \Omega_j = \Omega_{\infty} \cap \Omega_j = \varnothing,
		\end{align*}
		the function $\widetilde{\varphi} (z) - \lambda$ is non-vanishing on $\DD_0$, and hence by the argument principle,
		\begin{gather*}
			\wind{(\varphi (\TT), \lambda)} = -1.
		\end{gather*}
	
		Lastly, observe that (c) is a direct consequence of the definition of $\Lambda_{\mathrm{c}}$ and (b). The proof is complete.
	\end{proof}

	We next give a characterization for the connectedness of $\sigma (T_{\varphi})$. Let
	\begin{align*}
		D_{\varphi} = \left\{ \frac{z^ 2 p' (z)}{a} : z \in \DD_0 \right\}
	\end{align*}
	and
	\begin{align*}
		S = \left\{ \frac{m + 2}{m + 1} : m \in \NN \right\}.
	\end{align*}

	\begin{lem} \label{lemma: connectedness for univalent case}
		Let $a \in \CC \backslash \{ 0 \}$, $p \in \Hol (\overline{\DD})$, and $\varphi (z) = a \overline{z} + p(z)$. Suppose that $\widetilde{\varphi}$ is univalent on $\DD_0$. Then $\sigma (T_{\varphi})$ is connected if and only if $D_{\varphi} \cap S = \varnothing$.
	\end{lem}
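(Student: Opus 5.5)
By Lemma \ref{lemma: connectedness}, $\sigma (T_{\varphi})$ is connected if and only if $\Lambda_{\mathrm{d}} (T_{\varphi}) = \varnothing$. So the plan is to show that, in the univalent case, $\Lambda_{\mathrm{d}} (T_{\varphi}) \neq \varnothing$ exactly when some $z \in \DD_0$ satisfies $z^2 p'(z)/a \in S$. The two ingredients are Lemma \ref{lemma: univalent} and part (c) of Theorem \ref{theorem: spectrum and essential spectrum}.

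First, locate $\Lambda_{\mathrm{d}}(T_{\varphi})$. By Lemma \ref{lemma: univalent}(b), every $\lambda$ in a bounded component $\Omega_j$ has winding number $-1$. Hence
\begin{align*}
\{ \lambda \notin \varphi(\TT) : \wind(\varphi(\TT), \lambda) = 0 \} = \Omega_{\infty},
\end{align*}
and therefore $\Lambda_{\mathrm{d}}(T_{\varphi}) = \{ \lambda \in \Omega_\infty : \ker(T_\varphi - \lambda I) \neq \{0\} \}$. By Lemma \ref{lemma: univalent}(a), for each $\lambda \in \Omega_\infty$ the equation $\widetilde{\varphi}(z) = \lambda$ has exactly one root $z_\lambda = (\widetilde{\varphi})^{-1}(\lambda)$ in $\DD_0$. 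Since $\widetilde{\varphi}$ is univalent there, $(\widetilde{\varphi})'(z_\lambda) \neq 0$, so this root is simple. Also, $\widetilde{\varphi}(z) - \lambda$ is never non-vanishing on $\DD_0$ for such $\lambda$. Theorem \ref{theorem: spectrum and essential spectrum}(c), applied with $k = 1$, then gives
\begin{align*}
\lambda \in \Lambda_{\mathrm{d}}(T_{\varphi}) \iff \frac{z_\lambda^2 p'(z_\lambda)}{a} \in S.
\end{align*}

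Next, use the bijection $\widetilde{\varphi}: \DD_0 \to \Omega_\infty$ to transfer this statement to $\DD_0$. As $\lambda$ runs over $\Omega_\infty$, $z_\lambda$ runs over all of $\DD_0$. Hence $\Lambda_{\mathrm{d}}(T_\varphi) \neq \varnothing$ if and only if there is $z \in \DD_0$ with $z^2 p'(z)/a \in S$, that is, $D_\varphi \cap S \neq \varnothing$. Combining this with Lemma \ref{lemma: connectedness} gives the claim.

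The only delicate point is checking that Theorem \ref{theorem: spectrum and essential spectrum}(c) applies as stated: $\lambda \notin \varphi(\TT)$, and the zero set in $\DD_0$ is a single simple zero. The first follows from $\Omega_\infty \cap \varphi(\TT) = \varnothing$. The second follows from univalence, which forces a nonvanishing derivative. Once these are confirmed, the argument is essentially bookkeeping.
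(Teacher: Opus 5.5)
Your proposal is correct and follows essentially the same route as the paper. You reduce via Lemma \ref{lemma: connectedness} to $\Lambda_{\mathrm{d}}(T_{\varphi}) = \varnothing$, identify $\Lambda_{\mathrm{d}}(T_{\varphi})$ with the eigenvalues lying in $\Omega_{\infty}$ using Lemma \ref{lemma: univalent}, apply part (c) of Theorem \ref{theorem: spectrum and essential spectrum} to the unique preimage, and transfer the condition back to $\DD_0$ through the bijection $\widetilde{\varphi}\colon \DD_0 \to \Omega_{\infty}$. Your explicit check that this root is simple, since univalence forces $(\widetilde{\varphi})' \neq 0$, is a detail the paper leaves implicit and is a welcome addition.
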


	\begin{proof}
		In view of Lemma \ref{lemma: connectedness}, we are supposed to show that
		\begin{align*}
			\Lambda_{\mathrm{d}} & = \left\{ \lambda \notin \varphi (\TT) : \wind{(\varphi (\TT), \lambda)} = 0  \ \textrm{ and } \  \ker{(T_{\varphi} - \lambda I)} \neq \{ 0 \} \right\} \\
			& = \left\{ \lambda \in \Omega_{\infty} : \textrm{$\lambda$ is an eigenvalue of $T_{\varphi}$} \right\},
		\end{align*}
		where the second ``$=$'' is due to (b) of Lemma \ref{lemma: univalent}.
		According to (a) of Lemma \ref{lemma: univalent}, for every $\lambda \in \Omega_{\infty}$, the equation
		\begin{align*}
			\widetilde{\varphi} (z) = \lambda
		\end{align*}
		has a unique root on $\DD_0$, namely $(\widetilde{\varphi})^ {-1} (\lambda)$. Thus, by (c) of Theorem \ref{theorem: spectrum and essential spectrum}, $\lambda \in \Omega_{\infty}$ is an eigenvalue of $T_{\varphi}$ if and only if
		\begin{align*}
			\frac{\big[ (\widetilde{\varphi})^ {-1} (\lambda) \big]^ 2 p' \big( (\widetilde{\varphi})^ {-1} (\lambda) \big)}{a} = \frac{m + 2}{m + 1}
		\end{align*}
		holds for some $m \in \NN$. As a result, $\Omega_{\infty}$ contains no eigenvalues of $T_{\varphi}$ if and only if
		\begin{align*}
			\left\{ \frac{\big[ (\widetilde{\varphi})^ {-1} (\lambda) \big]^ 2 p' \big( (\widetilde{\varphi})^ {-1} (\lambda) \big)}{a} : \lambda \in \Omega_{\infty} \right\} \cap S = \varnothing.
		\end{align*}
		Observe that as $\lambda$ ranges over $\Omega_{\infty}$, $(\widetilde{\varphi})^ {-1} (\lambda)$ runs over $\DD_0$, to obtain
		\begin{align*}
			D_{\varphi} = \left\{ \frac{\big[ (\widetilde{\varphi})^ {-1} (\lambda) \big]^ 2 p' \big( (\widetilde{\varphi})^ {-1} (\lambda) \big)}{a} : \lambda \in \Omega_{\infty} \right\}.
		\end{align*}
		Therefore, we arrive at the conclusion that $\sigma (T_{\varphi})$ is connected if and only if $D_{\varphi} \cap S = \varnothing$.
	\end{proof}

	In view of the above lemma, for the symbol $\varphi$ given at the beginning of this subsection, we say that $T_{\varphi}$ is \textit{$S$-spectrum connected} (with $S$ standing for ``strong'') if there exists a $\delta > 0$ such that
	\begin{align*}
		\dist{(D_{\varphi}, S)} \geqslant \delta.
	\end{align*}
	
	\noindent \textit{2.4. A spectrum-preserving transform}
	\vspace{5pt}
	
	Let $V, J: L^ 2 (\DD, \rd A) \longrightarrow L^ 2 (\DD, \rd A)$ be the following operators:
	\begin{align*}
		V f (z) = \overline{f (\overline{z})} \qquad \textrm{and} \qquad J f (z) = f (\overline{z}).
	\end{align*}
	Clearly, $V$ is antiunitary, $J$ is unitary, and $L_a^ 2$ is an invariant subspace of $V$. Some basic properties of $V$ are collected in the next lemma.
	
	\begin{lem} \label{lemma: properties of V}
		The following are true:
		\begin{enumerate}[label = \textup{(\alph*)}]
			\item For $f \in L^ {\infty} (\DD, \rd A)$ and $g \in L^ 2 (\DD, \rd A)$, $V (f g) = (V f) (V g)$.
			\item $V P = P V$ on $L^ 2 (\DD, \rd A)$.
			\item For every $\varphi \in L^ {\infty} (\DD, \rd A)$, $T_{V \varphi} = V T_{\varphi} V$.
		\end{enumerate}
	\end{lem}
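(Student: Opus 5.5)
(a) is a direct pointwise check. (b) is a change of variables in the integral formula (\ref{eq10}) for $P$. (c) follows from (a), (b) and $V^2 = I$.

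For (a), take $f \in L^\infty(\DD, \rd A)$ and $g \in L^2(\DD, \rd A)$. Evaluating at a point $z$ gives
\begin{align*}
V(fg)(z) = \overline{f(\overline{z})\, g(\overline{z})} = \overline{f(\overline{z})}\;\overline{g(\overline{z})} = (Vf)(z)\,(Vg)(z).
\end{align*}
The identity holds almost everywhere, hence in $L^2(\DD, \rd A)$.

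For (b), fix $f \in L^2(\DD, \rd A)$ and $z \in \DD$. By (\ref{eq10}),
\begin{align*}
VPf(z) = \overline{Pf(\overline{z})} = \int_{\DD} \frac{\overline{f(w)}}{(1 - z w)^2} \dd A(w).
\end{align*}
Substitute $w = \overline{u}$. Since $\rd A$ is invariant under complex conjugation, this becomes
\begin{align*}
\int_{\DD} \frac{\overline{f(\overline{u})}}{(1 - z\overline{u})^2} \dd A(u) = P(Vf)(z).
\end{align*}
Both sides are analytic functions of $z$ agreeing at every point, so $VP = PV$.

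An alternative route avoids the integral. $V$ preserves $L_a^2$: if $h(z) = \sum c_n z^n$ then $Vh(z) = \sum \overline{c_n} z^n$. $V$ also preserves $(L_a^2)^\perp$, because $V$ is antiunitary with $V^2 = I$, so $\langle Vg, h\rangle = \overline{\langle g, Vh\rangle} = 0$ for $g \perp L_a^2$ and $h \in L_a^2$. Writing $f = Pf + (f - Pf)$ and applying $V$ then gives $PVf = VPf$.

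For (c), let $\varphi \in L^\infty(\DD, \rd A)$ and $f \in L_a^2$. Note that $V\varphi \in L^\infty$ with the same norm, so $T_{V\varphi}$ is defined. Using $V^2 = I$, then (a), then (b),
\begin{align*}
V T_\varphi V f = V P(\varphi \cdot Vf) = P V(\varphi\cdot Vf) = P\big((V\varphi)(V^2 f)\big) = P\big((V\varphi) f\big) = T_{V\varphi} f.
\end{align*}

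None of these steps is a real obstacle. The only point needing care is the measure-preserving change of variables $w \mapsto \overline{w}$ in (b), together with the observation that the conjugation from $V$ turns the kernel $(1 - \overline{z}\,\overline{w})^{-2}$ into $(1 - zw)^{-2}$.
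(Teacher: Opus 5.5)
Your proof is correct and follows essentially the paper's argument: (a) is a pointwise check, (b) comes from conjugating the Bergman-kernel integral and substituting $w \mapsto \overline{w}$, and (c) combines (a), (b) and $V^2 = I$, as in the paper. Your alternative route for (b) is also valid but not needed: it uses that $V$ is antiunitary and preserves both $L_a^2$ and its orthogonal complement.
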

	
	\begin{proof}
		(a) is obvious. For an arbitrary $f \in L^ 2 (\DD, \rd A)$, we have by (\ref{eq10}) that
		\begin{align*}
			V P f (z) & = \overline{P f (\overline{z})} = \overline{\int_{\DD} \frac{f (w)}{(1 - \overline{z w})^ 2} \dd A (w)} = \int_{\DD} \frac{\overline{f (w)}}{(1 - z w)^ 2} \dd A (w) \\
			& = \int_{\DD} \frac{\overline{f (\overline{w})}}{(1 - z \overline{w})^ 2} \dd A(w) = P V f (z), \qquad z \in \DD,
		\end{align*}
		to obtain (b). (c) is then an easy consequence of (a) and (b):
		\begin{gather*}
			T_{V \varphi} f = P [(V \varphi) f] = P [V (\varphi V f)] = V [P (\varphi V f)] = V T_{\varphi} V f, \qquad f \in L_a^ 2.
		\end{gather*}
		The proof of the lemma is complete.
	\end{proof}

	Using the preceding lemma, we easily derive the following useful property of $J$:
	
	\begin{lem} \label{lemma: property of J}
		Let $\varphi \in L^ {\infty} (\DD, \rd A)$. Then $\sigma (T_{\varphi}) = \sigma (T_{J \varphi})$.
	\end{lem}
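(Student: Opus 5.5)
The plan is to relate $T_{J\varphi}$ to $T_{\varphi}$ through the antiunitary map $V$ together with adjoints, and then use the fact that both operations transform spectra in a controlled way (each one conjugates the spectrum), so that their composition leaves it unchanged. Observe first that $J\varphi(z) = \varphi(\overline{z}) = \overline{V\varphi(z)}$, i.e. $J\varphi = \overline{V\varphi}$. By the Bergman analogue of (\ref{eq4}),
\begin{align*}
T_{J\varphi} = T_{\overline{V\varphi}} = (T_{V\varphi})^{*} = (V T_{\varphi} V)^{*},
\end{align*}
where the last equality is (c) of Lemma \ref{lemma: properties of V}.

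Next I would establish the two spectral facts. For any bounded $A$ on $L_a^2$, $\sigma(A^*) = \{\overline{\lambda} : \lambda \in \sigma(A)\}$, which is standard. For the antiunitary $V$, which satisfies $V^2 = I$ on $L_a^2$ (clear from the definition, so $V^{-1} = V$), we have $V(A - \lambda I)V = VAV - \overline{\lambda} I$, because $V$ is conjugate-linear. Since $V$ is a bijective isometry, $A - \lambda I$ is invertible exactly when $VAV - \overline{\lambda} I$ is invertible; the inverse is $V(A-\lambda I)^{-1}V$, which is again linear and bounded. Hence $\sigma(VAV) = \overline{\sigma(A)}$.

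Combining these, $\sigma(T_{J\varphi}) = \overline{\sigma(V T_\varphi V)} = \overline{\overline{\sigma(T_\varphi)}} = \sigma(T_\varphi)$. The only step needing any care is the antilinear conjugation: one must check that $VAV$ is linear and that the scalar picks up a complex conjugate. Beyond that, the argument is routine.
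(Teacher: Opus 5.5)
Your proof is correct and is essentially the paper's argument. Both rest on $T_{\overline{\psi}} = T_{\psi}^{*}$ and part (c) of Lemma \ref{lemma: properties of V}; the paper packages them into the single identity $(T_{\varphi} - \lambda I)^{*} = V(T_{J\varphi} - \lambda I)V$ and reads off invertibility directly, whereas you split the argument into two spectrum-conjugating steps ($A \mapsto A^{*}$ and $A \mapsto VAV$) whose conjugations cancel.
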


	\begin{proof}
		Note that
		\begin{align*}
			(T_{\varphi} - \lambda I)^ * = T_{\overline{\varphi - \lambda}} = T_{V J (\varphi - \lambda)} = V T_{J (\varphi - \lambda)} V = V (T_{J \varphi} - \lambda I) V, \qquad \lambda \in \CC,
		\end{align*}
		in which the third ``$=$'' follows from (c) of Lemma \ref{lemma: properties of V}. Thus, we find that $T_{\varphi} - \lambda I$ is invertible if and only if $T_{J \varphi} - \lambda I$ is invertible. This finishes the proof.
	\end{proof}

%
%

	\section{Dynamical Properties of \texorpdfstring{$T_{a \overline{z} + p}$}{T\_\{a {\textbackslash}overline\{z\} + p \}}} \label{Dynamical properties}
	
	Throughout this section, we let
	\begin{align*}
		\varphi (z) = a \overline{z} + p(z),
	\end{align*}
	where $a \in \CC \backslash \{ 0 \}$ and $p (z) = \sum\limits_{k = 0}^ {\infty} {c_k z^ k} \in \Hol (\overline{\DD})$. We begin with the following necessary condition for the operator $T_{\varphi}$ to be hypercyclic, which is the same as (a) of Theorem \ref{theorem: hypercyclic Hardy-Toeplitz operators} (according to Widom's theorem, the spectrum of every Hardy--Toeplitz operator is connected, see \cite[Corollary 7.46]{Dou}).
	
	\begin{thm} \label{theorem: necessary condition for hypercyclicity}
		Let $a \in \CC \backslash \{ 0 \}$, $p \in \Hol (\overline{\DD})$, and $\varphi (z) = a \overline{z} + p(z)$. If $T_{\varphi}$ is hypercyclic, then
		\begin{enumerate}[label = \textup{(\alph*)}]
			\item $\widetilde{\varphi} (z) = a z^ {-1} + p (z)$ is univalent on $\DD_0$,
			\item $\overline{\DD} \cap \sigma (T_{\varphi}) \neq \varnothing$ and $\widehat{\DD} \cap \sigma (T_{\varphi}) \neq \varnothing$, and
			\item $\sigma (T_{\varphi})$ is connected.
		\end{enumerate}
	\end{thm}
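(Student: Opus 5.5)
The plan is to derive all three conclusions from two standard facts about a hypercyclic operator $T$: its adjoint $T^*$ has no eigenvalues, and Theorem \ref{theorem: spectra of hypercyclic operators}. The link to the geometry of the symbol is Theorem \ref{theorem: spectrum and essential spectrum}, together with the argument principle applied to $\widetilde{\varphi}$ on $\DD$.

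\textbf{Counting relation.} For $\lambda \notin \varphi(\TT)$, let $N(\lambda)$ be the number of zeros of $\widetilde{\varphi} - \lambda$ in $\DD_0$, counted with multiplicity. Since $\widetilde{\varphi}|_{\TT} = \varphi|_{\TT}$ and $\widetilde{\varphi}$ has a simple pole at $0$ (as $a \neq 0$), the argument principle gives
\begin{align*}
\wind(\varphi(\TT), \lambda) = N(\lambda) - 1 .
\end{align*}
Hence, by (b) of Theorem \ref{theorem: spectrum and essential spectrum}, $\ind(T_{\varphi} - \lambda I) = 1 - N(\lambda)$.

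\textbf{(a) Univalence.} Suppose $\widetilde{\varphi}$ is not univalent on $\DD_0$. Then either two distinct points $z_1, z_2 \in \DD_0$ have a common value, or $\widetilde{\varphi}'(z_0) = 0$ at some $z_0 \in \DD_0$. In the first case, take disjoint small disks around $z_1$ and $z_2$; by the open mapping theorem their images are open sets whose intersection is a nonempty open set $W$. In the second case, $\widetilde{\varphi}$ is locally at least $2$-to-$1$ near $z_0$, which again gives a nonempty open set $W$. In either case every $\lambda \in W$ satisfies $N(\lambda) \geq 2$. The curve $\varphi(\TT)$ is the image of $\TT$ under a real-analytic map, so it has empty interior, and we can pick $\lambda \in W \setminus \varphi(\TT)$. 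For this $\lambda$ we get $\ind(T_{\varphi} - \lambda I) \leq -1$, so the cokernel of $T_\varphi - \lambda I$ is nontrivial. Therefore $\overline{\lambda}$ is an eigenvalue of $T_\varphi^*$. This contradicts hypercyclicity, since the adjoint of a hypercyclic operator has empty point spectrum. So $\widetilde{\varphi}$ is univalent on $\DD_0$.

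\textbf{(c) Connectedness.} By Lemma \ref{lemma: connectedness}, it suffices to show $\Lambda_{\mathrm{d}}(T_\varphi) = \varnothing$. Take $\lambda \in \Lambda_{\mathrm{d}}(T_\varphi)$. Then $\lambda \notin \varphi(\TT) = \sigma_{\mathrm e}(T_\varphi)$ and its winding number is $0$. So $T_\varphi - \lambda I$ is Fredholm of index $0$ with nonzero kernel, and hence has nonzero cokernel. Again $\overline{\lambda}$ is an eigenvalue of $T_\varphi^*$, which is impossible. Alternatively, Lemma \ref{lemma: Lambdad is discrete} makes each such $\lambda$ a singleton component, but the adjoint argument rules these out even when they lie on $\TT$, so it is the one to use.

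\textbf{(b) Position of the spectrum.}
\begin{itemize}
\item \emph{Meeting $\overline{\DD}$.} The component $\Lambda_{\mathrm c}(T_\varphi)$ must intersect $\TT$ by Theorem \ref{theorem: spectra of hypercyclic operators}, so $\sigma(T_\varphi) \cap \overline{\DD} \neq \varnothing$.
\item \emph{Meeting $\widehat{\DD}$.} Suppose instead that $\sigma(T_\varphi) \subseteq \overline{\DD}$. Since $\varphi(\TT) = \sigma_{\mathrm e}(T_\varphi) \subseteq \sigma(T_\varphi)$, we get $|\varphi| \leq 1$ on $\TT$. Because $\varphi$ is harmonic and continuous on $\overline{\DD}$, the maximum principle gives $\|\varphi\|_\infty \leq 1$. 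Then $\|T_\varphi\| \leq 1$, and every orbit is bounded, which contradicts hypercyclicity.
\end{itemize}

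The main obstacle is step (a). The delicate points there are producing an open set of values with at least two preimages, including the critical-point case, and avoiding the curve $\varphi(\TT)$, so that the index formula of Theorem \ref{theorem: spectrum and essential spectrum} applies.
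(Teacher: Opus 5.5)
Your proof is correct and matches the paper's. In (c) you use the same argument: at a point of $\Lambda_{\mathrm{d}}(T_\varphi)$ the index is zero and the kernel is nonzero, so $\overline{\lambda}$ is an eigenvalue of $T_\varphi^*$, contradicting hypercyclicity. In (b) you combine Theorem~\ref{theorem: spectra of hypercyclic operators} with the maximum principle and the contraction argument, exactly as the paper does.

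The only difference is in (a). The paper simply cites \cite[Corollary 3.2]{LZ}, whereas you give a self-contained and valid proof via $\wind(\varphi(\TT),\lambda) = N(\lambda)-1$ and the same adjoint-eigenvalue obstruction, correctly choosing $\lambda$ off the smooth curve $\varphi(\TT)$, which has empty interior.
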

	
	\begin{proof}
		(a) has been proved in \cite[Corollary 3.2]{LZ} (in that corollary, $p$ is assumed to be an analytic polynomial, but the proof is identical if $p$ is replaced by a function analytic on $\overline{\DD}$). To prove (c), by Lemma \ref{lemma: connectedness}, it suffices to show that $\Lambda_{\mathrm{d}} (T_{\varphi}) = \varnothing$. Indeed, if $\Lambda_{\mathrm{d}} (T_{\varphi})$ is nonempty, for every $\lambda \in \Lambda_{\mathrm{d}} (T_{\varphi})$ we have
		\begin{align*}
			0 = -\wind{(\varphi (\TT), \lambda)} = \ind{(T_{\varphi} - \lambda I)} = \dim{\ker{(T_{\varphi} - \lambda I)}} - \dim{\ker{(T_{\varphi}^ * - \overline{\lambda} I)}}.
		\end{align*}
		Since $\lambda$ is an eigenvalue of $T_{\varphi}$, we derive that $\overline{\lambda}$ is an eigenvalue of $T_{\varphi}^ *$. However, this contradicts the hypercyclicity of $T_{\varphi}$ due to \cite[Proposition 1.17]{BM}.
		
		Lastly, let us prove (b). By (c) and Theorem \ref{theorem: spectra of hypercyclic operators}, we have
		\begin{align*}
			\varnothing \neq \TT \cap \sigma (T_{\varphi}) \subseteq \overline{\DD} \cap \sigma (T_{\varphi}).
		\end{align*}
		If $\widehat{\DD} \cap \sigma (T_{\varphi}) = \varnothing$, then we obtain
		\begin{align*}
			\varphi (\TT) & = \sigma_{\mathrm{e}} (T_{\varphi}) \subseteq \sigma (T_{\varphi}) \subseteq \overline{\DD},
		\end{align*}
		where the  ``$=$'' is due to (a) of Theorem \ref{theorem: spectrum and essential spectrum}.
		Hence,
		\begin{align*}
			\max_{z \in \TT} {|\varphi (z)|} \leqslant 1.
		\end{align*}
		It follows by the maximum modulus principle for harmonic functions that
		\begin{align*}
			\| T_{\varphi} \| \leqslant \sup_{z \in \DD} {|\varphi (z)|} \leqslant 1,
		\end{align*}
		which is impossible since the orbit of every vector under a contraction is bounded.
	\end{proof}

	We now turn to the sufficient condition for hypercyclicity and other dynamical properties of $T_{\varphi}$. Our main tools are Lemmas \ref{lemma: S-G criterion}, \ref{lemma: chaotic operators} and \ref{lemma: frequently hypercyclic operators}, so it is necessary to calculate eigenvectors of $T_{\varphi}$. Recall that
	\begin{align*}
		e_j (z) = \sqrt{j + 1} z^ j, \qquad j \in \NN.
	\end{align*}
	
	\begin{lem} \label{lemma: recurrence relation}
		Let $a \in \CC \backslash \{ 0 \}$, $p (z) = \sum_{k = 0}^ {\infty} {c_k z^ k} \in \Hol (\overline{\DD})$, and $\varphi (z) = a \overline{z} + p(z)$. Suppose that
		\begin{align*}
			f = \sum_{j = 0}^ {\infty} {a_j e_j}
		\end{align*}
		is an eigenvector of $T_{\varphi}$ corresponding to the eigenvalue $\lambda$. Then $\{ a_j \}_{j = 0}^ {\infty}$ satisfies the recurrence relation
		\begin{align*}
			\sqrt{\frac{j + 1}{j + 2}} a a_{j + 1} + \sum_{k = 0}^ j {\sqrt{\frac{j - k + 1}{j + 1}} c_k a_{j - k}} = \lambda a_j, \qquad j \in \NN.
		\end{align*}
	\end{lem}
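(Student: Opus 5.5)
The plan is to expand $T_{\varphi} f$ in the orthonormal basis $\{ e_j \}$ using Lemma \ref{lemma: Toeplitz opeartors with monomial symbols}, and then compare coefficients of $e_j$ in the identity $T_{\varphi} f = \lambda f$.

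The first step is to write $T_{\varphi} = a T_{\overline{z}} + T_p$. For $T_p$ we use $T_p = \sum_{k} c_k T_{z^k}$, which converges in operator norm. This holds because $p \in \Hol (\overline{\DD})$ gives $\sum_k |c_k| < \infty$ (the power series converges on a disk of radius larger than $1$), and $\| T_{z^ k} \| \leqslant 1$ by the $L^ \infty$ bound. Since $T_{\varphi}$ is bounded, $T_{\varphi} f = \sum_{i} a_i T_{\varphi} e_i$ with convergence in $L_a^ 2$. The coefficient of $e_j$ in $T_\varphi f$ is therefore the limit of the finite partial-sum coefficients, and we may interchange the sums.

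Next we compute the contribution of each piece to the $e_j$ coefficient.
\begin{itemize}
\item By Lemma \ref{lemma: Toeplitz opeartors with monomial symbols}, $T_{\overline{z}} e_{j+1} = \sqrt{\frac{j+1}{j+2}} e_j$ and $T_{\overline{z}} e_0 = 0$. So $a T_{\overline{z}} f$ contributes $a \sqrt{\frac{j+1}{j+2}} a_{j+1}$.
\item $T_{z^ k} e_i = \sqrt{\frac{i+1}{i+k+1}} e_{i+k}$. Setting $i = j - k$ with $0 \leqslant k \leqslant j$, the analytic part contributes $\sum_{k=0}^ j c_k \sqrt{\frac{j-k+1}{j+1}} a_{j-k}$.
\end{itemize}
Equating the total with the coefficient $\lambda a_j$ of $\lambda f$ gives the stated recurrence. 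Equivalently, one can take inner products: $\langle T_\varphi f, e_j\rangle = \langle f, T_{\overline{\varphi}} e_j\rangle$, and apply Lemma \ref{lemma: Toeplitz opeartors with monomial symbols} to $T_{\overline{a}z + \overline{p}} e_j$. This version avoids any rearrangement issue, because $\overline{p}$ contributes only the finitely many terms with $k \leqslant j$.

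The only step needing care is justifying the interchange of the infinite sums. This follows from operator-norm convergence of $\sum c_k T_{z^ k}$, or more cleanly from the adjoint computation $\langle T_\varphi f, e_j\rangle = \langle f, T_{\overline{\varphi}} e_j\rangle$. In that computation $T_{\overline{z}^ k} e_j$ vanishes for $k > j$, so only finitely many terms appear. I will use this adjoint route to avoid the issue entirely.
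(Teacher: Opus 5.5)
Your proposal is correct and follows essentially the same route as the paper, which writes $T_{\varphi} = aT_{\overline{z}} + \sum_{k} c_k T_{z^k}$ with norm convergence (from $p_n \to p$ uniformly on $\overline{\DD}$), expands $T_{\varphi} f$ via Lemma~\ref{lemma: Toeplitz opeartors with monomial symbols}, and reads off $\langle T_{\varphi} f, e_j\rangle$. Your adjoint variant $\langle f, T_{\overline{\varphi}} e_j\rangle$ is a slightly tidier way to see that only the finitely many terms with $k \leqslant j$ survive. It still relies on the same norm convergence, now of $\sum_k \overline{c_k}\, T_{\overline{z}^k}$, to evaluate $T_{\overline{p}} e_j$.
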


	
	\begin{proof}
		For each $n \in \NN$, let
		\begin{align*}
			p_n (z) = \sum_{k = 0}^ n {c_k z^ k} \qquad \textrm{and} \qquad \varphi_n (z) = a \overline{z} + p_n (z).
		\end{align*}
		Since $p$ is analytic on $\overline{\DD}$, $\{ p_n \}_{n = 0}^ {\infty}$ converges to $p$ uniformly on $\overline{\DD}$. It follows that
		\begin{align*}
			\| T_{\varphi} - T_{\varphi_n} \| = \| T_{\varphi - \varphi_n} \| = \| T_{p - p_n} \| \leqslant \sup_{z \in \DD} {|p (z) - p_n (z)|} \rightarrow 0 \ \ \ \ (n \rightarrow \infty),
		\end{align*}
		which gives
		\begin{align*}
			T_{\varphi} = a T_{\overline{z}} + \sum_{k = 0}^ {\infty} {c_k T_{z^ k}},
		\end{align*}
		where the series on the right-hand side converges in the operator norm. As a consequence, we have by Lemma \ref{lemma: Toeplitz opeartors with monomial symbols} that
		\begin{align*}
			T_{\varphi} f = a \sum_{j = 0}^ {\infty} {\sqrt{\frac{j + 1}{j + 2}} a_{j + 1} e_j} + \sum_{k = 0}^ {\infty} {c_k \sum_{j = k}^ {\infty} {\sqrt{\frac{j - k + 1}{j + 1}} a_{j - k} e_j}}.
		\end{align*}
		For each $j_0 \in \NN$,
		\begin{align*}
			\langle T_{\varphi} f, e_{j_0} \rangle_{L_a^ 2} & = a \sum_{j = 0}^ {\infty} {\sqrt{\frac{j + 1}{j + 2}} a_{j + 1} \langle e_j, e_{j_0} \rangle_{L_a^ 2}} + \sum_{k = 0}^ {\infty} {c_k \sum_{j = k}^ {\infty} {\sqrt{\frac{j - k + 1}{j + 1}} a_{j - k} \langle e_j, e_{j_0} \rangle_{L_a^ 2}}} \\
			& = \sqrt{\frac{j_0 + 1}{j_0 + 2}} a a_{j_0 + 1} + \sum_{k = 0}^ {j_0} {c_k \sqrt{\frac{j_0 - k + 1}{j_0 + 1}} a_{j_0 - k}},
		\end{align*}
		so we get that
		\begin{align*}
			T_{\varphi} f = \sum_{j = 0}^ {\infty} {\bigg[ \sqrt{\frac{j + 1}{j + 2}} a a_{j + 1} + \sum_{k = 0}^ j {\sqrt{\frac{j - k + 1}{j + 1}} c_k a_{j - k}} \bigg] e_j}.
		\end{align*}
		The desired recurrence relation follows by comparing Fourier coefficients of $T_{\varphi} f$ and $\lambda f$.
	\end{proof}
	
	For each $\lambda \in \sigma_{\mathrm{p}} (T_{\varphi})$, we let $f_{\lambda}$ be an eigenvector of $T_{\varphi}$ corresponding to $\lambda$ such that $f_{\lambda} (0) = 1$. According to Lemma \ref{lemma: recurrence relation}, such an $f_{\lambda}$ is uniquely determined (thus the eigenspace of $\lambda$ is one-dimensional), so we have established a one-to-one correspondence, namely $\lambda \longmapsto f_{\lambda}$, from $\sigma_{\mathrm{p}} (T_{\varphi})$ into the set of eigenvectors of $T_{\varphi}$. The $j$-th Fourier coefficient of $f_{\lambda}$ will be denoted by $a_j (\lambda)$; by Lemma \ref{lemma: recurrence relation}, as a function of $\lambda$, $a_j (\lambda)$ is a polynomial of degree $j$. To apply Lemmas \ref{lemma: S-G criterion}, \ref{lemma: chaotic operators} and \ref{lemma: frequently hypercyclic operators}, we are supposed to show that the subspace
	\begin{align*}
		\spa{\Bigg[ \bigcup_{\lambda \in E} {\ker{(T_{\varphi} - \lambda I)}} \Bigg]}
	\end{align*}
	is dense in $L_a^ 2$ for suitable $E$. In other words, we need to show that if
	\begin{align*}
		F (\lambda) :=\langle f_{\lambda}, g \rangle_{L_a^ 2}
	\end{align*}
	is zero for any fixed $g \in L_a^ 2$ and for all $\lambda \in E$, then $g$ is zero.
	
	\begin{lem} \label{lemma: absolute and uniform convergence}
		Let $a \in \CC \backslash \{ 0 \}$, $p \in \Hol (\overline{\DD})$, and $\varphi (z) = a \overline{z} + p(z)$. Suppose that $\widetilde{\varphi} (z) = a z^ {-1} + p (z)$ is univalent on $\overline{\DD}_0$. Let $\Omega_0$ be the interior of $\varphi (\TT)$. Then $\Omega_0 \subseteq \sigma_{\mathrm{p}} (T_{\varphi})$, and for every $g = \sum\limits_{j = 0}^ {\infty} {b_j e_j} \in L_a^ 2$, the series
		\begin{align}
			\sum_{j = 0}^ {\infty} {\overline{b}_j a_j (\lambda)} = \langle f_{\lambda}, g \rangle_{L_a^ 2} = F (\lambda) \label{eq14}
		\end{align}
		converges absolutely and uniformly on compact subsets of $\Omega_0$. Consequently, $F$ is analytic on $\Omega_0$.
	\end{lem}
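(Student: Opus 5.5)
We first show that $\Omega_0\subseteq\sigma_{\mathrm p}(T_\varphi)$. By Lemma \ref{lemma: univalent}, $\widetilde\varphi$ maps $\DD_0$ conformally onto $\Omega_\infty$. Hence for $\lambda\in\Omega_0$ (a bounded component of $\CC\backslash\varphi(\TT)$) the function $\widetilde\varphi-\lambda$ has no zeros in $\DD_0$, and $\wind(\varphi(\TT),\lambda)=-1$. Theorem \ref{theorem: spectrum and essential spectrum}(b) then gives $\ind(T_\varphi-\lambda I)=1$, and part (c) gives that $\lambda$ is an eigenvalue. By Lemma \ref{lemma: recurrence relation} the eigenvector $f_\lambda$ normalized by $f_\lambda(0)=1$ is unique, and its coefficients $a_j(\lambda)$ are polynomials in $\lambda$ produced by the recurrence.

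For the uniform convergence we need a locally uniform bound on $\sum_j|a_j(\lambda)|^2$. Cauchy--Schwarz then gives $\sum|\overline b_j a_j(\lambda)|\le\|g\|\,\|f_\lambda\|$. Better still, we aim for a uniform geometric-type decay of a tail, so that the Weierstrass M-test applies and analyticity of $F$ follows because each partial sum is a polynomial. The plan is to find $f_\lambda$ explicitly rather than through the recurrence. The eigen-equation $P(a\overline z f+pf)=\lambda f$ can be rewritten using $T_{\overline z}f(z)=\frac{1}{z^2}\int_0^z\big(tf(t)\big)'\,dt\cdot$(suitable normalisation). Concretely, Lemma \ref{lemma: Toeplitz opeartors with monomial symbols} gives $T_{\overline z}\big(\sum \alpha_j z^j\big)=\sum \frac{j}{j+1}\alpha_j z^{j-1}$, so $T_{\overline z}f=\frac{1}{z^2}\int_0^z t f'(t)\,dt$. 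The eigen-equation then becomes the first-order linear ODE
\[
\frac{a}{z^2}\int_0^z tf'(t)\,dt + (p(z)-\lambda)f(z)=0 .
\]
Setting $h(z)=\int_0^z tf'$ gives $h'=zf'$, and substituting $f=-z^2h/\big(a(p-\lambda)\big)$ turns this into a first-order ODE for $h$ whose singular points are the zeros of $\widetilde\varphi-\lambda$, that is, of $a+z(p-\lambda)z$ in a suitable form. Solving yields $f_\lambda(z)=\exp\!\big(\int_0^z R(t,\lambda)\,dt\big)$ times an explicit factor, where $R$ is rational in $t$ with poles only at the zeros of $\widetilde\varphi(t)-\lambda$.

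Univalence of $\widetilde\varphi$ on $\overline{\DD}_0$ implies that for $\lambda$ in a compact $K\subset\Omega_0$ all these zeros stay outside $\overline{\DD}$. They are absent from $\overline{\DD}_0$ because $\widetilde\varphi(\overline{\DD}_0)=\Omega_\infty\cup\varphi(\TT)$ is disjoint from $\Omega_0$, and $t=0$ is not a zero. By continuity and compactness they lie in $|t|\ge 1+\varepsilon$ uniformly for $\lambda\in K$. Thus $f_\lambda$ extends holomorphically to a disk $D(0,1+\varepsilon)$, with $\sup_{|z|\le 1+\varepsilon/2}|f_\lambda(z)|\le M_K$ by joint continuity in $(z,\lambda)$. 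Cauchy estimates then give $|a_j(\lambda)|\le \sqrt{j+1}\,M_K(1+\varepsilon/2)^{-j}$ uniformly on $K$. Combined with $|b_j|\le\|g\|$, this makes $\sum_j \sup_K|\overline b_j a_j(\lambda)|$ finite. Identity (\ref{eq14}) holds because $\{e_j\}$ is orthonormal, and $F$ is analytic as a locally uniform limit of polynomials.

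The main obstacle is the explicit ODE solution, namely checking carefully that the singularities of the coefficient are exactly the roots of $\widetilde\varphi(t)=\lambda$ and that no singularity occurs at $t=0$ (the indicial behaviour at $0$ must be regular). If this becomes messy, there is a fallback. The uniqueness in Lemma \ref{lemma: recurrence relation} shows $f_\lambda$ equals the power-series solution of the recurrence, and we can bound that solution's radius of convergence by comparison with the meromorphic solution of the ODE. Either way, the key input is the uniform separation of the roots of $\widetilde\varphi-\lambda$ from $\overline{\DD}$ for $\lambda\in K$.
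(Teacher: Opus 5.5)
Your argument is correct in substance. The paper proves $\Omega_0\subseteq\sigma_{\mathrm p}(T_\varphi)$ exactly as you do, via part (a) of Lemma~\ref{lemma: univalent} and part (c) of Theorem~\ref{theorem: spectrum and essential spectrum}. For the convergence it gives no argument, only a pointer to \cite[Lemma 4.2]{LZ}. Your explicit-ODE route therefore supplies a self-contained proof, once the algebra is repaired.

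\textbf{The algebra slip.} Multiplying the eigen-equation by $z^2$ gives $ah+z^2(p-\lambda)f=0$ with $h(z)=\int_0^z tf'(t)\,\mathrm{d}t$. Your substitution $f=-z^2h/(a(p-\lambda))$ has $a$ and $z^2$ interchanged. More importantly, dividing by $p-\lambda$ creates spurious singularities at the zeros of $p-\lambda$, and these can lie in $\DD$.

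\textbf{The corrected ODE.} Eliminate $h$ instead of $f$. Set $Q_\lambda(z):=a+z\big(p(z)-\lambda\big)=z\big(\widetilde\varphi(z)-\lambda\big)$. Then
\[
\frac{\mathrm{d}}{\mathrm{d}z}\big[ah+z^2(p-\lambda)f\big]=z\big[Q_\lambda f'+(Q_\lambda'+p-\lambda)f\big].
\]
The bracket on the left vanishes at $0$. Hence, for $f$ holomorphic near $0$ with $f(0)=1$, the eigen-equation is equivalent to the ODE $Q_\lambda f'=-(Q_\lambda'+p-\lambda)f$; equivalently, this is the recurrence of Lemma~\ref{lemma: recurrence relation} with $a_0=1$. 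Its solution is
\[
f_\lambda(z)=\frac{a}{Q_\lambda(z)}\exp\Big(-\int_0^z\frac{p(t)-\lambda}{Q_\lambda(t)}\,\mathrm{d}t\Big).
\]
For $p=0$ this reduces to $(1-\lambda z/a)^{-2}$, the Bergman kernel, as it should.

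\textbf{Your flagged obstacle disappears.} Since $Q_\lambda(0)=a\neq0$, the point $0$ is regular. The only singularities are the zeros of $\widetilde\varphi-\lambda$, and the coefficient is meromorphic rather than rational. From there your argument goes through as written: uniform separation of these zeros from $\overline{\DD}$ for $\lambda\in K$, joint continuity in $(z,\lambda)$, Cauchy estimates, and the M-test.

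\textbf{A simplification.} The geometric decay is more than you need. By Cauchy--Schwarz on tails,
\[
\sum_{j>N}|\overline b_j a_j(\lambda)|\le\Big(\sum_{j>N}|b_j|^2\Big)^{1/2}\|f_\lambda\|.
\]
So the bound $\sup_{\lambda\in K}\|f_\lambda\|\le\sup_{K\times\overline{\DD}}|f_\lambda|<\infty$ already gives absolute uniform convergence on $K$.
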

	
	\begin{proof}
		According to (a) of Lemma \ref{lemma: univalent}, the equation
		\begin{align*}
			\widetilde{\varphi} (z) = \lambda
		\end{align*}
		has no roots on $\DD_0$ for every $\lambda \in \Omega_0$, and hence by (c) of Theorem \ref{theorem: spectrum and essential spectrum}, $\Omega_0$ is contained in the point spectrum of $T_{\varphi}$. The proof of the second assertion is analogous to that of \cite[Lemma 4.2]{LZ} and is therefore omitted.
	\end{proof}
	
	Part of the importance of Lemma \ref{lemma: absolute and uniform convergence} is that it tells us that under the assumption of the lemma, if $F (\lambda)$ is zero on $E \subseteq \Omega_0$ then it is identically zero on $\Omega_0$, provided that $E$ has accumulation points in $\Omega_0$. Thus, the problem we now face is how to deduce $g = 0$ from the condition $F (\lambda) = 0$ ($\lambda \in \Omega_0$). As shown in \cite{Aba}, if $p$ is an analytic linear polynomial, then the recurrence relation in Lemma \ref{lemma: recurrence relation} is a three-term recurrence relation, and then by using Favard's theorem, there exists a finite positive Borel measure $\rd \nu$ such that $\{ a_j (\lambda) \}_{j = 0}^ {\infty}$ is orthogonal with respect to $\rd \nu$. It follows from orthogonality that $b_0 = b_1 = \cdots = 0$, and hence $g = 0$. In what follows, by means of analytic function calculus, we will obtain a sufficient condition for hypercyclicity and other dynamical properties of $T_{a \overline{z} + p}$ for any $p \in \Hol (\overline{\DD})$, which is close to (b) of Theorem \ref{theorem: hypercyclic Hardy-Toeplitz operators}.
	
	\begin{lem} \label{lemma: key lemma}
		Adopt the hypotheses and notation of Lemma \ref{lemma: absolute and uniform convergence}, and suppose further that $\widetilde{\varphi}$ is univalent on $R \DD_0$ for some $R > 1$ and $T_{\varphi}$ is $S$-spectrum connected. If the series in \textup{(\ref{eq14})} is zero in $\Omega_0$, then $g = 0$.
	\end{lem}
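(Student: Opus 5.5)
The plan is a duality argument: convert the vanishing of $F$ on $\Omega_0$ into a statement about the range of $T_{\varphi}^* - \overline{\lambda} I$, and then use Liouville's theorem on a resolvent-type function. Fix $g=\sum_j b_j e_j$ with $F\equiv 0$ on $\Omega_0$.

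\textbf{Step 1: the interior region $\Omega_0$.} For $\lambda\in\Omega_0$, Lemma~\ref{lemma: univalent}(b) and Theorem~\ref{theorem: spectrum and essential spectrum}(b) give that $T_{\varphi}-\lambda I$ is Fredholm of index $1$. Its kernel is spanned by $f_\lambda$, since the eigenspace is one-dimensional by Lemma~\ref{lemma: recurrence relation}. Hence $T_{\varphi}^*-\overline{\lambda}I$ is injective with closed range, and
\begin{align*}
\operatorname{ran}(T_{\varphi}^*-\overline{\lambda}I)=\ker(T_{\varphi}-\lambda I)^{\perp}=\{f_\lambda\}^{\perp}.
\end{align*}
Since $F(\lambda)=\langle f_\lambda,g\rangle=0$, there is a unique $h_\lambda\in L_a^2$ with $(T_{\varphi}^*-\overline{\lambda}I)h_\lambda=g$. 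Using a left inverse of $T_{\varphi}^*-\overline{\lambda}I$ that depends analytically on $\overline{\lambda}$ (the standard fact for semi-Fredholm families of constant index), I would show that $\lambda\mapsto h_\lambda$ is anti-analytic on $\Omega_0$. The same reasoning applies on every bounded component $\Omega_j$ on which $F$ is assumed to vanish.

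\textbf{Step 2: the exterior region $\Omega_\infty$.} For $\lambda\in\Omega_\infty$ the winding number is $0$. By $S$-spectrum connectedness and Lemma~\ref{lemma: connectedness for univalent case}, $\lambda$ is not an eigenvalue, so $T_{\varphi}-\lambda I$ is invertible. Set
\begin{align*}
h_\lambda=(T_{\varphi}^*-\overline{\lambda}I)^{-1}g \qquad (\lambda\in\Omega_\infty).
\end{align*}
This is anti-analytic on $\Omega_\infty$ and satisfies $\|h_\lambda\|=O(1/|\lambda|)$ as $\lambda\to\infty$.

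\textbf{Step 3: gluing across $\varphi(\TT)$ (main obstacle).} The goal is to show that the two families agree as one anti-analytic $L_a^2$-valued function on $\CC\setminus\varphi(\TT)$ that extends across the curve $\varphi(\TT)$. Here I would use univalence of $\widetilde{\varphi}$ on $R\DD_0$. For $1/R<r<1$, the curves $\widetilde{\varphi}(r\TT)$ foliate a neighbourhood of $\varphi(\TT)$ and can be used to deform the contour. Concretely, I would first prove that the series \eqref{eq14} converges locally uniformly on a neighbourhood of $\overline{\Omega_0}$, not just on $\Omega_0$, by sharpening the growth bounds on $a_j(\lambda)$ from Lemma~\ref{lemma: absolute and uniform convergence} with $R$ in place of $1$. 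I would then show that $\|h_\lambda\|$ stays bounded as $\lambda$ approaches $\varphi(\TT)$ from either side, so that the boundary values match; a Painlev\'e/Morera-type removability argument on the rectifiable curve $\varphi(\TT)$ then completes the gluing. The hard part is the uniform bound from the $\Omega_\infty$ side. The quantitative hypothesis $\dist(D_\varphi,S)\geqslant\delta$ should give exactly this: in the recurrence of Lemma~\ref{lemma: recurrence relation}, the ratios that would produce an eigenvalue, namely $z^2p'(z)/a=(m+2)/(m+1)$ from Theorem~\ref{theorem: spectrum and essential spectrum}(c), stay $\delta$-away. That should yield a lower bound $\|(T_{\varphi}-\lambda I)u\|\geqslant c\,\operatorname{dist}(\lambda,\varphi(\TT))\|u\|$ with $c$ uniform in $\lambda$. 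I would try first to establish this estimate by comparing $T_{\varphi}-\lambda$ with the Hardy-type model attached to $\widetilde{\varphi}-\lambda$ on $r\TT$.

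\textbf{Step 4: Liouville.} Once glued, $\lambda\mapsto\overline{h_\lambda}$ (coefficientwise) is an entire $L_a^2$-valued function tending to $0$ at $\infty$, hence $h_\lambda\equiv 0$. Then $g=(T_{\varphi}^*-\overline{\lambda}I)h_\lambda=0$.
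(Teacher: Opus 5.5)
Your Steps 1, 2 and 4 are correct, but together they only reformulate the problem. They show that for every $\lambda\notin\varphi(\TT)$ the equation $(T_\varphi^*-\overline{\lambda}I)h_\lambda=g$ has an anti-analytic solution. They also show that $g=0$ would follow if the inner and outer families were restrictions of one entire function, i.e.\ if the local spectrum of $g$ for $T_\varphi^*$ were empty. All the content is therefore in Step 3, and the tools you propose there do not work.

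First, (\ref{eq14}) cannot converge on a neighbourhood of $\overline{\Omega_0}$ for general $g$. Such a neighbourhood contains points $\lambda\in\Omega_\infty$. If $\sum_j\overline{b}_ja_j(\lambda)$ converged for every $g\in L_a^2$, the uniform boundedness principle would give $\{a_j(\lambda)\}_j\in\ell^2$. Then $\sum_ja_j(\lambda)e_j$ would be an eigenvector of $T_\varphi$ (by the coefficient computation in Lemma \ref{lemma: recurrence relation}) with eigenvalue in $\Omega_\infty$, which is exactly what $S$-spectrum connectedness rules out in your own Step 2. Growth bounds on $a_j(\lambda)$ that are uniform in $g$ cannot help, and using special features of the given $g$ would be circular.

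Second, the estimate $\|(T_\varphi-\lambda I)u\|\geqslant c\,\dist(\lambda,\varphi(\TT))\|u\|$, which is itself unproved, yields only $\|h_\lambda\|=O(\dist(\lambda,\varphi(\TT))^{-1})$, not boundedness. Even bounded anti-analytic families on the two sides of a Jordan curve need not glue; a Cauchy transform of a measure on the curve is the model case. What has to be shown is that the jump across $\varphi(\TT)$ vanishes, and that is precisely where $F\equiv 0$ must be used. In your plan it serves only to make the inner equation solvable, and nothing links the inner solutions to the outer resolvent. Moreover, on $\varphi(\TT)$ the operator $T_\varphi-\lambda I$ is not even semi-Fredholm (its index jumps from $1$ to $0$ across the curve), so there is no operator continuity to lean on. The assertion that ``the boundary values match'' is thus the whole lemma, left unproved.

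The missing idea is to stay away from the curve entirely. You evaluate $F$ at an operator whose spectrum lies inside $\Omega_0$ and which realizes the recurrence on a basis. The paper takes $\psi(z)=p(\rho^{-1}\overline{z})+a\rho z$ with $\rho\in(R^{-1},1)$ close to $1$:
\begin{itemize}
\item By Lemma \ref{lemma: Toeplitz opeartors with monomial symbols}, $T_\psi$ acts on $v_j=\rho^je_j$ by exactly the recurrence of Lemma \ref{lemma: recurrence relation}, so induction gives $a_j(T_\psi)v_0=v_j$.
\item $\widetilde{J\psi}(z)=\widetilde{\varphi}(z/\rho)$ is univalent on $\overline{\DD}_0$, and its boundary curve $\widetilde{\varphi}(\rho^{-1}\TT)$ together with its interior lies in $\Omega_0$. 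Both facts use univalence on $R\DD_0$.
\item $\dist(D_{J\psi},S)\geqslant\delta/2$; this is where the quantitative $\delta$ is needed, since it survives the dilation.
\item Lemmas \ref{lemma: univalent}, \ref{lemma: connectedness for univalent case} and \ref{lemma: property of J} then give $\sigma(T_\psi)=\sigma(T_{J\psi})\subseteq\Omega_0$.
\end{itemize}
Since (\ref{eq14}) converges locally uniformly on $\Omega_0$, the Riesz--Dunford calculus gives $0=F(T_\psi)v_0=\sum_j\overline{b}_j\rho^je_j$, hence $g=0$. No boundary analysis on $\varphi(\TT)$ is needed at all.
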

	
	\begin{proof}
		Let
		\begin{align*}
			\psi (z) = p (\rho^ {-1} \overline{z}) + a \rho z,
		\end{align*}
		where $R^ {-1} < \rho < 1$. If we write $p (z) = \sum_{k = 0}^ {\infty} {c_k z^ k}$, then direct calculations analogous to those in the proof of Lemma \ref{lemma: recurrence relation} give
		\begin{align*}
			T_{\psi} e_j = \sqrt{\frac{j + 1}{j + 2}} a \rho e_{j + 1} + \sum_{k = 0}^ {j} {\sqrt{\frac{j - k + 1}{j + 1}} c_k \rho^ {-k} e_{j - k}}, \qquad j \in \NN.
		\end{align*}
		Thus, if we put
		\begin{align*}
			v_j = \rho^ j e_j, \qquad j \in \NN,
		\end{align*}
		then
		\begin{align}
			T_{\psi} v_j = \sqrt{\frac{j + 1}{j + 2}} a v_{j + 1} + \sum_{k = 0}^ j {\sqrt{\frac{j - k + 1}{j + 1}} c_k v_{j - k}}, \qquad j \in \NN. \label{eq17}
		\end{align}
		
		Next, we show that $\sigma (T_{\psi}) \subseteq \Omega_0$ whenever $\rho$ is close to $1$. According to Lemma \ref{lemma: property of J}, it suffices to show that $\sigma (T_{J \psi}) \subseteq \Omega_0$, where $J$ is the mapping $f (z) \longmapsto f (\overline{z})$. The analytic extension of $(J \psi) |_{\TT}$ is
		\begin{align*}
			\widetilde{J \psi} (z) = \frac{a \rho}{z} + \sum_{k = 0}^ {\infty} {c_k \Big( \frac{z}{\rho} \Big)^ k} = \widetilde{\varphi} \Big( \frac{z}{\rho} \Big).
		\end{align*}
		Since $\widetilde{\varphi}$ is univalent on $R \DD_0$ and $1 < \rho^ {-1} < R$, we find that $\widetilde{J \psi}$ is univalent on $\overline{\DD}_0$. If we let $\Omega_0'$ and $\Omega_{\infty}'$ be the interior and exterior of $(J \psi) (\TT)$, then by Lemma \ref{lemma: univalent}, we get $$\Lambda_{\mathrm{c}} (T_{J \psi}) = (J \psi) (\TT) \sqcup \Omega_0'.$$ It is easy to see that
		\begin{align*}
			D_{J \psi} = \left\{ \frac{z^ 2 p' (z)}{a} : z \in \rho^ {-1} \DD_0 \right\}.
		\end{align*}
		Since $T_{\varphi}$ is $S$-spectrum connected, there is a $\delta > 0$ such that
		\begin{align*}
			\delta \leqslant \dist{(D_{\varphi}, S)} = \inf{\left\{ \left| \frac{z^ 2 p' (z)}{a} - \frac{m + 2}{m + 1} \right| : z \in \DD_0 \ \textrm{ and } \ m \in \NN \right\}}.
		\end{align*}
		Hence, by choosing $\rho$ sufficiently close to $1$ sufficiently, we get that
		\begin{align*}
			\frac{\delta}{2} \leqslant \dist{(D_{J \psi}, S)} = \inf{\left\{ \left| \frac{z^ 2 p' (z)}{a} - \frac{m + 2}{m + 1} \right| : z \in \rho^ {-1} \DD_0  \ \textrm{ and } \ m \in \NN \right\}},
		\end{align*}
		implying that the spectrum of $T_{J \psi}$ is connected by Lemma \ref{lemma: connectedness for univalent case}. Therefore,
		\begin{align*}
			\sigma (T_{J \psi}) = \Lambda_{\mathrm{c}} (T_{J \psi}) = (J \psi) (\TT) \sqcup \Omega_0',
		\end{align*}
		which is contained in $\Omega_0$.
		
		Fixing a $\rho \in (R^ {-1}, 1)$ close to $1$, we now prove that $g = 0$. Since the series defined in (\ref{eq14}) converges uniformly on compact subsets of $\Omega_0$ by Lemma \ref{lemma: absolute and uniform convergence} and since $\sigma (T_{\psi}) \subseteq \Omega_0$, we obtain
		\begin{align*}
			\sum_{j = 0}^ {\infty} {\overline{b}_j a_j (T_{\psi})} = 0,
		\end{align*}
		where the series on the left-hand side converges in the operator norm, see \cite[Theorem 10.27]{Rud}. Consequently,
		\begin{align}
			\sum_{j = 0}^ {\infty} {\overline{b}_j a_j (T_{\psi}) v_0} = 0. \label{eq18}
		\end{align}
		We claim that
		\begin{align}
			v_j = a_j (T_{\psi}) v_0, \qquad j \in \NN. \label{eq15}
		\end{align}
		We show the claim by induction. Since $a_0 = 1$, (\ref{eq15}) holds for $j = 0$. Supposing that (\ref{eq15}) is true for $0 \leqslant j \leqslant N$, let us show that (\ref{eq15}) is also true for $j = N + 1$. Indeed, invoking the recurrence formula in Lemma \ref{lemma: recurrence relation}, we find that
		\begin{align*}
			\sqrt{\frac{N + 1}{N + 2}} a a_{N + 1} (T_{\psi}) + \sum_{k = 0}^ {N} {\sqrt{\frac{N - k + 1}{N + 1}} c_k a_{N - k} (T_{\psi})} = T_{\psi} a_N (T_{\psi}),
		\end{align*}
		and hence
		\begin{align}
			\sqrt{\frac{N + 1}{N + 2}} a a_{N + 1} (T_{\psi}) v_0 + \sum_{k = 0}^ N {\sqrt{\frac{N - k + 1}{N + 1}} c_k v_{N - k}} = T_{\psi} v_N. \label{eq16}
		\end{align}
		Comparing (\ref{eq16}) with (\ref{eq17}), we arrive at
		\begin{align*}
			v_{N + 1} = a_{N + 1} (T_{\psi}) v_0,
		\end{align*}
		as required. Therefore, (\ref{eq18}) gives that
		\begin{align*}
			\sum_{j = 0}^ {\infty} {\overline{b}_j v_j} = 0.
		\end{align*}
		Since $\{ v_j \}_{j = 0}^ {\infty}$ is an orthogonal basis of $L_a^ 2$, we conclude that $b_0 = b_1 = \cdots = 0$, and hence $g = 0$.
	\end{proof}
	
	We now present the principal theorem of this section.
	
	\begin{thm} \label{theorem: sufficient condition for hypercyclicity}
		Let $a \in \CC \backslash \{ 0 \}$, $p \in \Hol (\overline{\DD})$, and $\varphi (z) = a \overline{z} + p (z)$. Then $T_{\varphi}$ is hypercyclic, weakly mixing, mixing, chaotic and frequently hypercyclic if
		\begin{enumerate}[label = \textup{(\alph*)}]
			\item $\widetilde{\varphi} (z) = a z^ {-1} + p (z)$ is univalent on $R \DD_0$ for some $R > 1$,
			\item $\DD \cap \sigma (T_{\varphi}) \neq \varnothing$ and $\widehat{\DD} \cap \sigma (T_{\varphi}) \neq \varnothing$, and
			\item $T_{\varphi}$ is $S$-spectrum connected, i.e.,
			\begin{align*}
				\inf{\left\{ \left| \frac{z^ 2 p' (z)}{a} - \frac{m + 2}{m + 1} \right| : z \in \DD_0, m \in \NN \right\}} > 0.
			\end{align*}
		\end{enumerate}
	\end{thm}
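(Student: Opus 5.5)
By Lemma \ref{lemma: dynamical properties}, it suffices to prove that $T_{\varphi}$ is mixing, chaotic and frequently hypercyclic. We will do this by verifying the hypotheses of Lemmas \ref{lemma: S-G criterion}, \ref{lemma: chaotic operators} and \ref{lemma: frequently hypercyclic operators}. All eigenvalues will be taken from $\Omega_0$, the interior of $\varphi(\TT)$.

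First we describe the spectrum. Condition (a) implies that $\widetilde{\varphi}$ is univalent on $\overline{\DD}_0$. Hence Lemma \ref{lemma: univalent}(c) gives $\Lambda_{\mathrm{c}}(T_{\varphi}) = \varphi(\TT) \sqcup \Omega_0$. Condition (c) shows $D_{\varphi} \cap S = \varnothing$, so Lemma \ref{lemma: connectedness for univalent case} gives
\begin{align*}
	\sigma(T_{\varphi}) = \varphi(\TT) \sqcup \Omega_0 = \overline{\Omega_0}.
\end{align*}
Univalence on a neighbourhood of $\TT$ makes $\varphi(\TT) = \widetilde{\varphi}(\TT)$ an analytic Jordan curve, so $\Omega_0$ is a Jordan domain with $\overline{\Omega_0} = \sigma(T_{\varphi})$. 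By (b), $\overline{\Omega_0}$ meets both $\DD$ and $\widehat{\DD}$. Since $\overline{\Omega_0}$ is the closure of the open set $\Omega_0$, it follows that $\Omega_0 \cap \DD$ and $\Omega_0 \cap \widehat{\DD}$ are nonempty open sets. Connectedness of $\Omega_0$ then forces $\Omega_0 \cap \TT$ to contain a nondegenerate arc $\Gamma$. By Lemma \ref{lemma: absolute and uniform convergence}, every point of $\Omega_0$ is an eigenvalue, with eigenvector $f_{\lambda}$.

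Next comes the density argument. Let $E$ be any subset of $\Omega_0$ having an accumulation point in $\Omega_0$. Suppose $g \in L_a^2$ is orthogonal to every $f_{\lambda}$ with $\lambda \in E$. Then $F(\lambda) = \langle f_{\lambda}, g\rangle$ is analytic on $\Omega_0$ by Lemma \ref{lemma: absolute and uniform convergence} and vanishes on $E$, so $F \equiv 0$ on $\Omega_0$. Lemma \ref{lemma: key lemma}, which uses (a) and (c), then gives $g = 0$. Hence the span of $\{f_{\lambda} : \lambda \in E\}$ is dense. We apply this with three choices of $E$:
\begin{enumerate}[label = (\roman*)]
	\item $E = \Omega_0 \cap \DD$ and $E = \Omega_0 \cap \widehat{\DD}$; these verify Lemma \ref{lemma: S-G criterion}, so $T_{\varphi}$ is mixing.
	\item $E = \{\ee^{2\ppi \ii t} : t \in \QQ\} \cap \Gamma$, which is dense in $\Gamma$ and so accumulates inside $\Omega_0$; this verifies Lemma \ref{lemma: chaotic operators}, so $T_{\varphi}$ is chaotic.
	\item For frequent hypercyclicity, let $\sigma$ be normalized arc-length measure on $\Gamma$, which is a continuous measure. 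Any Borel set $A$ with $\sigma(A) = 1$ has full measure in $\Gamma$, hence is uncountable and accumulates at points of $\Gamma \subseteq \Omega_0$. So the density statement applies to $E = A \cap \Gamma$, and Lemma \ref{lemma: frequently hypercyclic operators} gives frequent hypercyclicity.
\end{enumerate}

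The main obstacle is the topological step: showing that $\Omega_0 \cap \TT$ contains an arc, not merely that $\sigma(T_{\varphi})$ meets $\TT$. This requires care in using $\sigma(T_{\varphi}) = \overline{\Omega_0}$ together with the openness and connectedness of $\Omega_0$. The analytic heavy lifting is already contained in Lemma \ref{lemma: key lemma}.
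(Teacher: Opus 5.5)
Your proposal is correct and follows essentially the same route as the paper. You identify $\sigma(T_{\varphi}) = \varphi(\TT) \sqcup \Omega_0$ via condition (c), use (b) to get a nonempty relatively open piece of $\Omega_0 \cap \TT$, and then combine the identity theorem for $F$ (Lemma \ref{lemma: absolute and uniform convergence}) with Lemma \ref{lemma: key lemma} to obtain density of the eigenvectors.

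The differences are cosmetic. You use normalized arc-length measure on a subarc $\Gamma$, whereas the paper uses Lebesgue measure $m$ on $\TT$ intersected with $E$. You also spell out the mixing and chaotic cases, and the connectedness argument giving $\Omega_0 \cap \TT \neq \varnothing$, which the paper leaves implicit.
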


	\begin{proof}
		According to Lemma \ref{lemma: dynamical properties}, it is enough to show that $T_{\varphi}$ is mixing, chaotic and frequently hypercyclic if Conditions (a)--(c) hold. Since the proofs of these three dynamical properties are quite similar, we give only the proof of frequent hypercyclicity. First, by using Condition (c) and Lemmas \ref{lemma: connectedness} and \ref{lemma: univalent}, we obtain
		\begin{align*}
			\sigma (T_{\varphi}) = \Lambda_c (T_{\varphi}) = \varphi (\TT) \sqcup \Omega_0,
		\end{align*}
		where $\Omega_0$ is the interior of $\varphi (\TT)$. Thus, Condition (b) implies that $E := \Omega_0 \cap \TT$ is nonempty and open in $\TT$.
		
		Recall that $\rd m$ is the normalized Lebesgue measure on $\TT$. We show that $T_{\varphi}$ has a perfectly $m$-spanning set of $\TT$-eigenvectors, and hence $T_{\varphi}$ is frequently hypercyclic by Lemma \ref{lemma: frequently hypercyclic operators}. Let $A$ be any Borel subset of $\TT$ with $m (A) = m (\TT) = 1$. If $g$ is a function in $L_a^ 2$ that is orthogonal to
		\begin{align*}
			\mathcal{D}_{A \cap E} := \spa{\Bigg[ \bigcup_{\lambda \in A \cap E} {\ker{(T_{\varphi} - \lambda I)}} \Bigg]},
		\end{align*}
		then we get that
		\begin{align*}
			F (\lambda) = \langle f_{\lambda}, g \rangle_{L_a^ 2} = 0, \qquad \lambda \in A \cap E.
		\end{align*}
		Since $m (A) = 1$ and $E \subseteq \Omega_0$ is open in $\TT$, $A \cap E$ has accumulation points in $\Omega_0$, and thus by Lemmas \ref{lemma: absolute and uniform convergence} and \ref{lemma: key lemma} we conclude that $g = 0$. Therefore, $\mathcal{D}_{A \cap E}$ is dense in $L_a^ 2$. By definition, $T_{\varphi}$ has a perfectly $m$-spanning set of $\TT$-eigenvectors. \qedhere
		
		\begin{remark}
			In the Hardy space $H^ 2$, by arguing as in \cite{BL} and then applying Lemmas \ref{lemma: S-G criterion}, \ref{lemma: chaotic operators} and \ref{lemma: frequently hypercyclic operators}, one can easily show that conditions in part (b) of Theorem \ref{theorem: hypercyclic Hardy-Toeplitz operators} also imply that $\mathbf{T}_{\varphi} = \mathbf{T}_{a \overline{z} + p}$ is mixing, weakly mixing, chaotic and frequently hypercyclic.
		\end{remark}

	\end{proof}

	According to \cite[Theorem 3.1]{GZZ}, if $\deg(p) \leqslant 2$ then the spectrum of $T_{\varphi}$ is always connected. This result indeed suggests that in this case, Condition (c) in Theorem \ref{theorem: sufficient condition for hypercyclicity} automatically holds.
	
	\begin{coro} \label{corollary: sufficient condition for hypercyclicity for deg <= 2}
		Suppose that $a \in \CC \backslash \{ 0 \}$, $p$ is an analytic polynomial of degree less than or equal to $2$, and $\varphi (z) = a \overline{z} + p (z)$. Then $T_{\varphi}$ is one of hypercyclic, weakly mixing, mixing, chaotic or frequently hypercyclic if
		\begin{enumerate}[label = \textup{(\alph*)}]
			\item $\widetilde{\varphi} (z) = a z^ {-1} + p (z)$ is univalent on $R \DD_0$ for some $R > 1$, and
			\item $\DD \cap \sigma (T_{\varphi}) \neq \varnothing$ and $\widehat{\DD} \cap \sigma (T_{\varphi}) \neq \varnothing$.
		\end{enumerate}
	\end{coro}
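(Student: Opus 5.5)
The plan is to reduce the corollary to Theorem \ref{theorem: sufficient condition for hypercyclicity}. Conditions (a) and (b) of the corollary coincide with conditions (a) and (b) of that theorem. So once we verify that $T_{\varphi}$ is $S$-spectrum connected, the theorem gives all five dynamical properties at once, and in particular each one of them. The only work, then, is to show that when $\deg p \leqslant 2$,
\begin{align*}
\inf\left\{ \left| \frac{z^2 p'(z)}{a} - \frac{m+2}{m+1} \right| : z \in \DD_0,\ m \in \NN \right\} > 0 .
\end{align*}

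Write $p(z) = c_0 + c_1 z + c_2 z^2$, so that $z^2 p'(z)/a = (c_1 z^2 + 2 c_2 z^3)/a$. The first step is to extract size constraints from univalence. Since $\widetilde{\varphi}$ is univalent on $R\DD_0$ with $R>1$, we have $\widetilde{\varphi}'(z) = -a/z^2 + p'(z) \neq 0$ there. Equivalently,
\begin{align*}
h(z) := \frac{z^2 p'(z)}{a} \neq 1 \qquad \textrm{for } z \in R\DD_0 .
\end{align*}
The goal is to show that $h(\overline{\DD})$ stays a positive distance from $S$. Since $h$ is continuous on the compact set $\overline{\DD}$ and $S$ accumulates only at $1$, it suffices to prove the following two facts:
\begin{enumerate}[label = (\roman*)]
\item $h(\overline{\DD}) \cap S = \varnothing$;
\item $1 \notin h(\overline{\DD})$.
\end{enumerate}
Fact (ii) is immediate: $h(z) \neq 1$ on $R\DD_0 \supseteq \overline{\DD}_0$, and $h(0) = 0$. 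Given (i) and (ii), the compact set $h(\overline{\DD})$ is disjoint from the closed set $\overline{S} = S \cup \{1\}$, so the two sets are at positive distance, which gives condition (c).

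The main obstacle is (i): univalence must force $|h| < 1$ on $\DD$, or at least keep $h$ off the real interval $(1,2]$. For this I would use the earlier result quoted before the corollary, \cite[Theorem 3.1]{GZZ}, which says the spectrum of $T_{\varphi}$ is connected whenever $\deg p \leqslant 2$. By Lemma \ref{lemma: connectedness for univalent case}, connectedness gives $D_{\varphi} \cap S = \varnothing$, that is, $h(\DD_0)$ misses $S$.

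The same argument applies to the dilated symbol $\varphi_r(z) = a r^{-1}\overline{z} + p(rz)$ for $1 < r < R$. Its extension $\widetilde{\varphi_r}(z) = \widetilde{\varphi}(rz)$ is univalent on $\DD_0$, and $\varphi_r$ has the same form, with an analytic part of degree at most $2$. The associated function is
\begin{align*}
h_r(z) = \frac{z^2 r p'(rz)}{a r^{-1}} = h(rz),
\end{align*}
so connectedness of $\sigma(T_{\varphi_r})$ gives $h(r\DD_0) \cap S = \varnothing$. Since $\overline{\DD}_0 \subseteq r\DD_0$, this yields (i).

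Combining (i) and (ii) gives $\dist(D_{\varphi}, S) \geqslant \dist(h(\overline{\DD}), \overline{S}) > 0$. Therefore $T_{\varphi}$ is $S$-spectrum connected, and Theorem \ref{theorem: sufficient condition for hypercyclicity} applies.
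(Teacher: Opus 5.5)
Your proposal is correct and follows the paper's proof: the same dilated symbol $a R_0^{-1}\overline{z} + p(R_0 z)$, then \cite[Theorem 3.1]{GZZ} with Lemma \ref{lemma: connectedness for univalent case}, then a compactness argument. The only difference is that you exclude the accumulation point $1$ of $S$ explicitly, via $\widetilde{\varphi}' \neq 0$ on $R\DD_0$. The paper gets this only implicitly, from the openness of $\{ z^2 p'(z)/a : z \in R_0 \DD \}$, and your version makes that step clearer.
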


	\begin{proof}
		By Theorem \ref{theorem: sufficient condition for hypercyclicity}, it suffices to show that $T_{\varphi}$ is $S$-spectrum connected. Let $R_0$ be any number lying in $(1, R)$, and consider the symbol
		\begin{align*}
			\psi (z) = a R_0^ {-1} \overline{z} + p (R_0 z).
		\end{align*}
		By \cite[Theorem 3.1]{GZZ}, $\sigma (T_{\psi})$ is connected. Moreover, $\widetilde{\psi}$ is univalent on $\DD_0$ since $R_0 \DD_0 \subseteq R \DD_0$. Hence, we obtain from Lemma \ref{lemma: connectedness for univalent case} that
		\begin{align*}
			D_{\psi} \cap S = \varnothing,
		\end{align*}
		where
		\begin{align*}
			D_{\psi} = \left\{  \frac{z^ 2 p' (z)}{a} : z \in R_0 \DD_0 \right\}.
		\end{align*}
		Consequently,
		\begin{align*}
			\left\{ \frac{z^ 2 p' (z)}{a} : z \in R_0 \DD \right\} \cap S = (D_{\psi} \cup \{ 0 \}) \cap S = \varnothing.
		\end{align*}
		Observing that $D_{\psi} \cup \{ 0 \}$ contains
		\begin{align*}
			\overline{D_{\varphi}} = \left\{ \frac{z^ 2 p' (z)}{a} : z \in \overline{\DD} \right\},
		\end{align*}
		we therefore conclude that there exists an open set $D_{\psi} \cup \{ 0 \}$ containing the compact set $\overline{D_{\varphi}}$ that does not intersect $S$, which easily yields that
		\begin{align*}
			\dist{(D_{\varphi}, S)} = \dist{(\overline{D_{\varphi}}, S)} \geqslant \delta
		\end{align*}
		for some $\delta > 0$. By definition, $T_{\varphi}$ is $S$-spectrum connected.
	\end{proof}

	In particular, in the case that $p$ is a linear analytic polynomial, we obtain the following result as well as an alternative proof of Theorem \ref{theorem: Abad's result}.

	\begin{coro} \label{corollary: sufficient and necessary condition for dynamical properties when deg p = 1}
		Let $a, c_1 \in \CC \backslash \{ 0 \}$, $c_0 \in \CC$, and $\varphi (z) = a \overline{z} + c_0 + c_1 z$. Then the following are equivalent:
		\begin{enumerate}[label = \textup{(\alph*)}]
			\item $|a| > |c_1|$ and $\varphi (\DD) \cap \TT \neq \varnothing$.
			\item $T_{\varphi}$ is hypercyclic.
			\item $T_{\varphi}$ is weakly mixing.
			\item $T_{\varphi}$ is mixing.
			\item $T_{\varphi}$ is chaotic.
			\item $T_{\varphi}$ is frequently hypercyclic.
		\end{enumerate}
	\end{coro}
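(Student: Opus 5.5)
The plan is to close a cycle of implications using the results already established. By Lemma \ref{lemma: dynamical properties}, each of (c)--(f) implies (b). So it suffices to prove two things. First, (b) implies (a). Second, (a) implies (d), (e) and (f); then (c) follows from (d).

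For (a) $\Rightarrow$ (d),(e),(f), we apply Corollary \ref{corollary: sufficient condition for hypercyclicity for deg <= 2}, since $\deg p = 1$. We must verify its two conditions.

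Condition (a) of the corollary asks for univalence of $\widetilde{\varphi}(z) = a/z + c_0 + c_1 z$ on $R\DD_0$ for some $R>1$. Suppose $\widetilde\varphi(z)=\widetilde\varphi(w)$ with $z \neq w$. Then
\begin{align*}
(z-w)\left(c_1 - \frac{a}{zw}\right) = 0,
\end{align*}
which forces $zw = a/c_1$. Since $|a/c_1|>1$, this is impossible for $|z|,|w|<R$ as soon as $R^2 < |a|/|c_1|$.

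Condition (b) of the corollary asks that $\sigma(T_\varphi)$ meet both $\DD$ and $\widehat{\DD}$. By Lemma \ref{lemma: univalent} together with the connectedness of the spectrum for $\deg p\le 2$, we have $\sigma(T_\varphi)=\varphi(\TT)\sqcup\Omega_0$. Here $\varphi(\TT)$ is an ellipse, nondegenerate since $|a|\neq|c_1|$, and $\Omega_0$ is its interior. Hence $\sigma(T_\varphi)=\varphi(\overline{\DD})$, the closed filled ellipse; indeed $\varphi$ is a real-linear bijection plus a constant, so it maps $\DD$ onto the open elliptic region. Now $\varphi(\DD)\cap\TT\neq\varnothing$ gives a point of $\TT$ in the open set $\varphi(\DD)$. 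A small disc around that point lies in $\varphi(\DD)$, and this disc meets both $\DD$ and $\widehat\DD$. This gives both conditions, so the corollary yields (d), (e), (f), and then (c).

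For (b) $\Rightarrow$ (a), we use Theorem \ref{theorem: necessary condition for hypercyclicity}. Its part (a) gives univalence of $\widetilde\varphi$ on $\DD_0$. If $|c_1|>|a|$, then $z=\sqrt{a/c_1}$ and $w=-z$ lie in $\DD_0$, satisfy $zw=-a/c_1$, and are distinct. We need a pair with $zw=a/c_1$ instead, so we take $z=\sqrt{a/c_1}$ and $w=\sqrt{a/c_1}\,e^{\ii\theta}\cdot e^{-\ii\theta}$; more simply, any two distinct $z,w$ with $|z|,|w|<1$ and $zw=a/c_1$ exist when $|a/c_1|<1$. For example, $z=re^{\ii\alpha}$ and $w=re^{\ii\beta}$ with $r^2=|a/c_1|$ and $\alpha+\beta=\arg(a/c_1)$, $\alpha\neq\beta$. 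This contradicts univalence.

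The borderline case $|a|=|c_1|$ is the delicate step. Here $\varphi(\TT)$ is a degenerate ellipse, namely a segment, so $\Omega_0=\varnothing$. The factor $c_1-a/(zw)$ cannot vanish inside $\DD_0$, so univalence alone does not exclude it. Instead we use Theorem \ref{theorem: necessary condition for hypercyclicity}(b) and (c): the spectrum is connected and equals $\Lambda_{\mathrm c}=\varphi(\TT)$, a segment. Alternatively, one can argue with the self-adjoint-type structure. When $|a|=|c_1|$, write $c_1=\overline{a}e^{2\ii\gamma}$. Then $e^{-\ii\gamma}(\varphi-c_0)$ is real-valued, so $T_\varphi=c_0+e^{\ii\gamma}T$ with $T$ self-adjoint, hence normal. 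A normal operator is never hypercyclic, since $T^*$ normal implies $\|T^{*n}x\|$ behaviour incompatible with hypercyclicity (Kitai / \cite[Chapter 5]{BM}, or directly because $T_\varphi$ is normal).

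Finally, $\varphi(\DD)\cap\TT\neq\varnothing$ follows from Theorem \ref{theorem: necessary condition for hypercyclicity}(b). The filled ellipse $\varphi(\overline{\DD})$ is connected and meets both $\overline{\DD}$ and $\widehat\DD$, so its interior $\varphi(\DD)$ meets $\TT$. To see this, suppose $\varphi(\DD)$ did not meet $\TT$. Being connected, it would lie in $\DD$ or in $\widehat\DD$, so its closure would lie in $\overline\DD$ or in $\overline{\widehat\DD}$. The first contradicts meeting $\widehat\DD$. The second forces $\overline{\DD}\cap\sigma\subseteq\TT$, which contradicts hypercyclicity via the contraction-type argument in the proof of Theorem \ref{theorem: necessary condition for hypercyclicity}: $T_\varphi^{-1}$ would then be a contraction, and $T$ is hypercyclic iff $T^{-1}$ is. The main obstacle is this boundary bookkeeping, together with the degenerate case $|a|=|c_1|$.
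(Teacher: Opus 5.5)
\textbf{Status.} Your direction (a) $\Rightarrow$ (d),(e),(f) is essentially the paper's argument: univalence of $\widetilde{\varphi}$ on $R\DD_0$ via $zw=a/c_1$, and $\varphi(\DD)$ being the interior of the filled ellipse $\sigma(T_\varphi)$. For (b) $\Rightarrow$ (a), however, the paper only cites \cite[Proposition 3.5]{LZ}. You instead try to derive it from Theorem~\ref{theorem: necessary condition for hypercyclicity} plus extra arguments. Two of your cases are fine:
\begin{itemize}
\item $|c_1|>|a|$: non-univalence, using $z\neq w$ with $|z|=|w|=|a/c_1|^{1/2}$ and $zw=a/c_1$.
\item $|c_1|=|a|$: $T_\varphi=c_0I+\ee^{\ii\gamma}T$ with $T$ self-adjoint, hence normal, and normal (indeed hyponormal) operators are never hypercyclic by Kitai's theorem. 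Your first attempt here, via connectedness of the segment, proves nothing, but the normality argument carries the case.
\end{itemize}

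\textbf{The gap.} It is in the last sub-case, $\varphi(\DD)\subseteq\widehat{\DD}$. You assert that $T_\varphi^{-1}$ is then a contraction ``via the contraction-type argument'' of Theorem~\ref{theorem: necessary condition for hypercyclicity}. That argument only gives $\|T_\varphi\|\leqslant\sup_{\DD}|\varphi|$, and it has no analogue for the inverse: there is no symbol bound for $T_\varphi^{-1}$. Spectral information alone yields only that the spectral radius of $T_\varphi^{-1}$ is at most $1$.

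This is a real obstruction, not a technicality. When the ellipse $\varphi(\overline{\DD})$ is externally tangent to $\TT$, all three conclusions of Theorem~\ref{theorem: necessary condition for hypercyclicity} hold. The function $\widetilde\varphi$ is univalent, the spectrum is connected, and it meets $\overline{\DD}$ at the tangency point as well as meeting $\widehat{\DD}$. So something beyond that theorem and the spectrum is needed.

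\textbf{The fix.} The missing ingredient is a numerical-range estimate. For $0\neq f\in L_a^2$, put $\rd\mu=|f|^2\rd A/\|f\|^2$, a probability measure on $\DD$. Then
\[
\frac{\langle T_\varphi f,f\rangle}{\|f\|^2}=\frac{\langle \varphi f,f\rangle_{L^2(\DD,\rd A)}}{\|f\|^2}=\int_{\DD}\varphi\dd\mu .
\]
This integral lies in the closed convex hull of $\varphi(\DD)$, which is the filled ellipse $\varphi(\overline{\DD})$; it is convex because $\varphi$ is real-affine. Since $\varphi(\overline{\DD})\subseteq\{|w|\geqslant 1\}$, we get $|\langle T_\varphi f,f\rangle|\geqslant\|f\|^2$, hence $\|T_\varphi f\|\geqslant\|f\|$. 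Every orbit of a nonzero $f$ therefore stays outside the open ball of radius $\|f\|$, so $T_\varphi$ is not hypercyclic. Equivalently, $T_\varphi^{-1}$ is a contraction, which is what you claimed.

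With this step inserted, your proof of (b) $\Rightarrow$ (a) is complete and self-contained, which removes the dependence on \cite{LZ}. The paper's route is shorter only because it outsources exactly this direction.
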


	\begin{proof}
		The implication (b) $\Rightarrow$ (a) was shown in \cite[Proposition 3.5]{LZ}. To complete the proof, according to Lemma \ref{lemma: dynamical properties} and Corollary \ref{corollary: sufficient condition for hypercyclicity for deg <= 2}, it suffices to show that if (a) holds then $T_{\varphi}$ satisfies Conditions (a) and (b) of Corollary \ref{corollary: sufficient condition for hypercyclicity for deg <= 2}.
		
		Letting $R$ be any positive number such that $1 < R^ 2 < \big|\frac{a}{c_1}\big|$, we show that $\widetilde{\varphi}$ is univalent on $R \DD_0$, thereby establishing Condition (a) of Corollary \ref{corollary: sufficient condition for hypercyclicity for deg <= 2}. Otherwise, there would exist distinct points $z_1, z_2 \in R \DD_0$ such that
		\begin{align*}
			\frac{a}{z_1} + c_1 z_1 = \frac{a}{z_2} + c_1 z_2.
		\end{align*}
		It follows that
		\begin{align*}
			\frac{a}{c_1} = z_1 z_2,
		\end{align*}
		which is a contradiction since the modulus of the right-hand side is less than $\big|\frac{a}{c_1}\big|$. To get Condition (b) of Corollary \ref{corollary: sufficient condition for hypercyclicity for deg <= 2}, first note that $\varphi (\DD)$ is an open elliptic disk which is the topological interior of $\sigma (T_{\varphi})$ (see \cite[Remark 2.4]{LZ}), and thus $\varphi (\DD) \cap \TT \neq \varnothing$ implies that
		\begin{align*}
			\DD \cap \sigma (T_{\varphi}) \supseteq \DD \cap \varphi (\DD) \neq \varnothing \qquad \textrm{and} \qquad \widehat{\DD} \cap \sigma (T_{\varphi}) \supseteq \widehat{\DD} \cap \varphi (\DD) \neq \varnothing.
		\end{align*}
		The proof is complete.
	\end{proof}

	\section{Discrepancy in Hypercyclicity Between Hardy and Bergman Spaces} \label{Discrepancy in Hypercyclicity Between Hardy and Bergman Spaces}
	
	As mentioned in Section \ref{S1}, in \cite[Theorem 4.1]{GZZ}, the authors constructed a harmonic polynomial $\varphi$ such that the spectrum of the Bergman--Toeplitz operator $T_{\varphi}$ is disconnected. This also yields a bounded harmonic symbol for which the spectra of the Toeplitz operators on the Hardy and Bergman spaces differ, since the spectra of Hardy--Toeplitz operators are always connected by Widom's theorem \cite[Corollary 7.16]{Dou}. In this section, we shall construct  examples to illustrate that this spectral discrepancy leads to a difference in hypercyclicity:
	
	\begin{thm} \label{theorem: an example}
		There exists a bounded harmonic function $\Phi$ on $\DD$ such that the corresponding Hardy--Toeplitz operator $\mathbf{T}_{\Phi}$ is hypercyclic while the Bergman--Toeplitz operator $T_{\Phi}$ is non-hypercyclic.
	\end{thm}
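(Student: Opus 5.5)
We look for $\Phi$ of the form $\Phi(z)=a\overline{z}+p(z)$ with $a\neq 0$ and $p\in\Hol(\overline{\DD})$ (a polynomial, say). On the Hardy side we will apply part (b) of Theorem \ref{theorem: hypercyclic Hardy-Toeplitz operators}. On the Bergman side we will apply part (c) of Theorem \ref{theorem: necessary condition for hypercyclicity}. So it suffices to find $a,p$ satisfying three conditions:
\begin{enumerate}[label = (\roman*)]
\item $\widetilde{\Phi}(z)=a/z+p(z)$ is univalent on $\overline{\DD}_0$;
\item $\CC\backslash\widetilde{\Phi}(\DD_0)$ meets both $\DD$ and $\widehat{\DD}$;
\item $\sigma(T_\Phi)$ is disconnected.
\end{enumerate}
By Lemma \ref{lemma: connectedness for univalent case}, once (i) holds, condition (iii) is equivalent to $D_\Phi\cap S\neq\varnothing$. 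That is, there should be some $z_0\in\DD_0$ and $m\in\NN$ with $z_0^2p'(z_0)/a=(m+2)/(m+1)$.

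The construction will come from univalent function theory. Put $g(\zeta)=\widetilde{\Phi}(1/\zeta)=a\zeta+p(1/\zeta)$. Condition (i) says exactly that the meromorphic function $g$ is univalent on $\{|\zeta|\geqslant 1\}$, with a simple pole at $\infty$; this is class-$\Sigma$-type behaviour. Writing $w=1/\zeta$ gives $z^2p'(z)=-\frac{\rd}{\rd \zeta}[p(1/\zeta)]$. The quantity in $D_\Phi$ is therefore $1-g'(\zeta)/a$ at $\zeta=1/z$ with $|\zeta|>1$. So condition (iii) asks for a point $\zeta_0$ with $|\zeta_0|>1$ and
\[
g'(\zeta_0)=a\Big(1-\frac{m+2}{m+1}\Big)=-\frac{a}{m+1}.
\]
We normalize $a=1$. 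We then need a univalent $g(\zeta)=\zeta+\sum_k d_k\zeta^{-k}$ (finite sum) on $|\zeta|\geqslant1$ whose derivative takes a negative real value $-1/(m+1)$ somewhere in $|\zeta|>1$. Taking $m=0$, the target is $g'(\zeta_0)=-1$.

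Univalent maps can rotate tangent directions by $\pi$: for instance $\zeta+d/\zeta$ with $|d|<1$ has $g'(\zeta)=1-d/\zeta^2$. At $\zeta=\sqrt d/r$ this equals $1-r^2$, which is positive, so a single term does not suffice. We will therefore add a higher term, $g(\zeta)=\zeta+d_1/\zeta+d_2/\zeta^2$ or $\zeta+d/\zeta^n$. For the latter, $g'(\zeta)=1-nd\zeta^{-n-1}$. This equals $-1$ when $nd\zeta^{-n-1}=2$, which requires $n|d|>2|\zeta|^{n+1}>2$. Univalence on $|\zeta|>1$ for $\zeta+d\zeta^{-n}$ holds, by the standard area-type criterion, roughly when $n|d|\leqslant1$, so this simple family fails. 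The real work is to pick a genuinely univalent example in which the derivative winds far enough. I would use an explicit conformal map of the exterior disk onto the exterior of a nonconvex polynomial-like curve, for instance a curve with a deep inward bay, where the boundary tangent reverses. Near such a boundary point, $\arg g'$ on $|\zeta|=1$ ranges over more than $\pi$, and by continuity one finds $|\zeta_0|$ slightly greater than $1$ with $g'(\zeta_0)$ on the negative real axis. After adjusting the modulus, which is possible by scaling $a$, we get $g'(\zeta_0)=-1/(m+1)$ for a suitable $m$.

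We must also keep univalence on a slightly larger closed set, namely $|\zeta|\geqslant1$ including the boundary. For a concrete choice one can verify univalence of a degree-3 map $g$ via the criterion that $\big(g(\zeta_1)-g(\zeta_2)\big)/(\zeta_1-\zeta_2)\neq0$, which is a polynomial inequality in $1/\zeta_1,1/\zeta_2$. This verification, together with exhibiting a point where $g'$ is negative real, is the main obstacle; I would attempt it numerically first and then confirm it with an estimate.

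Finally, condition (ii) is arranged by translation and scaling. Adding a constant $c_0$ to $p$ shifts $\widetilde\Phi(\DD_0)=\Omega_\infty$, and multiplying $\Phi$ by $t>0$ leaves both (i) and (iii) unchanged, because $D_\Phi$ is scale invariant. This lets us make the bounded complement $\sigma(\mathbf{T}_\Phi)$ contain points of both $\DD$ and $\widehat{\DD}$. Theorem \ref{theorem: hypercyclic Hardy-Toeplitz operators}(b) then makes $\mathbf{T}_\Phi$ hypercyclic, while Theorem \ref{theorem: necessary condition for hypercyclicity}(c) shows $T_\Phi$ is not.
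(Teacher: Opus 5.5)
Your reduction is sound and is the same as the paper's. With $g(\zeta)=\widetilde{\Phi}(1/\zeta)$, condition (iii) becomes $g'(\zeta_0)/a=-1/(m+1)$ for some $|\zeta_0|>1$ and $m\in\NN$. Condition (ii) is only a normalization; the paper uses a complex multiplier $k_0$ with $k_0\Omega_0\cap\TT\neq\varnothing$, and your translation works equally well. In your variable the paper's map is $g(\zeta)=\rho/s(\rho/\zeta)$, with $g'(\zeta)=F(\rho/\zeta)$, where $F=z^2s'/s^2$.

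The problem is that the whole content of the theorem is the existence of such a $g$, and you never produce one. The ``deep inward bay'' and the plan to ``attempt it numerically first'' are a heuristic, not a proof. The heuristic is also stated too loosely. What you need is that the branch of $\arg g'$ normalized by $\arg g'(\infty)=0$ attains $\pm\pi$ in $|\zeta|>1$. Equivalently, the tangent of $g(\TT)$ must deviate from the circular direction $\theta+\pi/2$ by more than $\pi$ somewhere. A range of length more than $\pi$ does not suffice, and a bay that runs straight inward gives deviations of only about $\pm\pi/2$, so the bay would have to curl.

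More seriously, the step ``after adjusting the modulus, which is possible by scaling $a$'' does not work. The ratio $g'(\zeta_0)/a$ is invariant under $\Phi\mapsto t\Phi$; you observe this yourself at the end, when you note that $D_\Phi$ is scale invariant. Changing $a$ alone with $p$ fixed need not preserve univalence of $a/z+p$. Since $\{-1/(m+1):m\in\NN\}$ is discrete away from $0$, knowing that $g'/a$ crosses the negative axis at some $-c$ gives nothing. You must show that the image of $g'/a$ over $|\zeta|>1$ actually contains a point $-1/(m+1)$, while keeping univalence on $\{|\zeta|\geqslant 1\}$. Your target $g'(\zeta_0)=-1$ for $m=0$ is likewise never reached.

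That covering property is the missing idea, and the paper supplies it with an explicit function. Take the $\alpha$-spirallike function $s(z)=z(1-z)^{-2\ee^{\ii\alpha}\cos\alpha}$ with $0<\cos 2\alpha<1/4$. Then $F(z)=(1-z)^{\ee^{2\ii\alpha}}(1+\ee^{2\ii\alpha}z)$. A Rouch\'e argument for a suitable logarithm of $F$, on a small sector with vertex at $z=1$, shows that this logarithm covers a tilted half-strip whose vertical sections have length $\beta/\cos 2\alpha>2\pi$. Exponentiating gives $F(\DD)\supseteq D(0,\delta)\backslash\{0\}$, so $F(z_0)=-1/(m_0+1)$ for some $z_0\in\DD$ and large $m_0$. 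Dilating with $|z_0|<\rho<1$ and setting $p(z)=\rho/s(\rho z)-1/z$ makes $\widetilde{\varphi}(z)=\rho/s(\rho z)$ univalent on $\overline{\DD}_0$. The point $\widetilde{\varphi}(z_0/\rho)$ then lies in $\Lambda_{\mathrm{d}}(T_\varphi)$. Without this step, or a substitute that genuinely lands on a value $-1/(m+1)$, your argument does not establish the theorem.
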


	We use $\log{z}$ to denote the principal branch of the complex logarithm which is defined and analytic on $\CC \backslash (-\infty, 0]$ and maps $1$ to $0$. For $a \in \CC \backslash (-\infty, 0]$ and $b \in \CC$, we set
	\begin{align*}
		a^ b = \ee^ {b \log{a}}.
	\end{align*}
	Our construction begins with the function
	\begin{align*}
		s (z) = z (1 - z)^ {-2 \ee^ {\ii \alpha} \cos{\alpha}} = z \exp{\big[ -2 \ee^ {\ii \alpha} \cos{\alpha} \cdot \log{(1 - z)} \big]}, \qquad z \in \DD,
	\end{align*}
	where $-\frac{\pi}{2} < \alpha <\frac{\pi}{2}$. It is a so-called $\alpha$-spirallike function which is, by definition, univalent on $\DD$, see \cite[Section 2.7]{Dur}. A direct calculation gives
	\begin{align*}
		s' (z) = (1 - z)^ {-2 \ee^ {\ii \alpha} \cos{\alpha}} \bigg( 1 + \frac{2 \ee^ {\ii \alpha} z\cos{\alpha}}{1 - z} \bigg).
	\end{align*}
	Hence,
	\begin{align*}
		F (z) & := \frac{z^ 2 s' (z)}{[s (z)]^ 2} = (1 - z)^ {2 \ee^ {\ii \alpha} \cos{\alpha}} \bigg( 1 + \frac{2 \ee^ {\ii \alpha} z\cos{\alpha}}{1 - z} \bigg) \\
		& = (1 - z)^ {2 \ee^ {\ii \alpha} \cos{\alpha} - 1} \big[ 1 + (2 \ee^ {\ii \alpha} \cos{\alpha} - 1) z \big] \\
		& = (1 - z)^ {\ee^ {2 \ii \alpha}} \big( 1 + \ee^ {2 \ii \alpha} z \big), \qquad z \in \DD.
	\end{align*}
	The role of $F$ will be clear later. At this stage, we show that $F$ has an important property that it maps $\DD$ onto a punctured neighborhood of the origin. For convenience of exposition, we first introduce some notation. First, we make a change of variables $\xi = 1 - z$ to obtain
	\begin{align*}
		G (\xi) := F (1 - \xi) = \xi^ {\ee^ {2 \ii \alpha}} \big[ 1 + \ee^ {2 \ii \alpha} (1 - \xi) \big], \qquad \xi \in D (1, 1).
	\end{align*}
	For $0 < \beta <\frac{\pi}{2}$ and $R > 0$, let
	\begin{align*}
		S_{\beta, R} = \left\{ r \ee^ {\ii \theta} : -\beta < \theta < \beta \ \textrm{ and } \ 0 < r < R \right\},
	\end{align*}
	which is a sector. And for $a \in \RR$ and $d > 0$, let
	\begin{align*}
		\Sigma_{d, a} = \left\{ x + \ii y : -d < y < d \  \textrm{ and } -\infty < x < a \right\},
	\end{align*}
	which is a half-strip. Observe that $\log{z}$ maps $S_{\beta, R}$ onto $\Sigma_{\beta, \log{R}}$ conformally.
	
	\begin{lem} \label{lemma: property of F}
		Fix an $\alpha \in (0, \frac{\pi}{4})$ satisfying $0 < \cos{(2 \alpha)} < \frac{1}{4}$. Let $\beta \in (0, \frac{\pi}{2})$ and $R > 0$, and suppose further that
		\begin{align*}
			\cos{(2 \alpha)} < \frac{\beta}{2 \ppi}, \qquad S_{\beta, R} \subseteq D (1, 1) \qquad \textrm{and} \qquad R < |1 + \ee^ {2 \ii \alpha}| (1 - \ee^ {-\frac{\beta}{ 2}}).
		\end{align*}
		Then there exists a $\delta = \delta (\beta, R) > 0$ such that
		\begin{align*}
			D (0, \delta) \backslash \{ 0 \} \subseteq G (S_{\beta, R}).
		\end{align*}
	\end{lem}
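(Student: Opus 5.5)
The plan is to pass to logarithmic coordinates $\zeta=\log\xi$, where $S_{\beta,R}$ becomes the half-strip $\Sigma_{\beta,\log R}$. In these coordinates the equation $G(\xi)=\lambda$ becomes a small perturbation of a linear equation, and I would solve it with Rouché's theorem on a disk. Write $w=\ee^{2\ii\alpha}=c+\ii s$ with $c=\cos(2\alpha)$ and $s=\sin(2\alpha)$.

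First I handle the non-power factor. Write $h(\xi)=1+w(1-\xi)=(1+w)(1-q(\xi))$ with $q(\xi)=w\xi/(1+w)$. For $\xi\in S_{\beta,R}$ we have $|q(\xi)|<R/|1+w|=:\kappa$, and the hypothesis on $R$ gives $\kappa<1-\ee^{-\beta/2}$. Hence $L(\xi):=\log(1-q(\xi))$ (principal branch of $\log(1-u)$, $|u|<1$) is analytic on $S_{\beta,R}$ and satisfies
\begin{align*}
|L(\xi)|\leqslant -\log(1-\kappa)=:\beta'<\tfrac{\beta}{2}.
\end{align*}
For $\xi=\ee^{\zeta}$ with $\zeta\in\Sigma_{\beta,\log R}$ we have $|\mathrm{Im}\,\zeta|<\beta<\ppi$, so $\log\xi=\zeta$. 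Therefore
\begin{align*}
G(\ee^{\zeta})=\exp\big(w\zeta+\log(1+w)+L(\ee^{\zeta})\big).
\end{align*}

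Next I set up the equation for a target value. Fix $\lambda$ with $0<|\lambda|<\delta$ and write $\lambda=\exp(\log(1+w)+u+\ii v)$ with $u=\log|\lambda|-\log|1+w|$. It suffices to find $\zeta\in\Sigma_{\beta,\log R}$ and $k\in\ZZ$ with $w\zeta+L(\ee^\zeta)=u+\ii(v+2\ppi k)$. Multiplying by $\overline{w}=w^{-1}$, this is
\begin{align*}
\zeta+\overline{w}L(\ee^\zeta)=\zeta^*,\qquad \zeta^*:=\overline{w}\big(u+\ii v'\big),\quad v'=v+2\ppi k.
\end{align*}
Here $\zeta^*=(cu+sv')+\ii(cv'-su)$. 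Varying $k$ shifts $\mathrm{Im}\,\zeta^*$ by $2\ppi c$. So we can choose $k$ with $|\mathrm{Im}\,\zeta^*|\leqslant\ppi c$, and $\ppi c<\beta/2$ because $c<\beta/(2\ppi)$. Writing $cv'=su+t$ with $|t|\leqslant\ppi c$, we get
\begin{align*}
\mathrm{Re}\,\zeta^*=cu+\frac{s(su+t)}{c}=\frac{u+st}{c}.
\end{align*}
This tends to $-\infty$ uniformly as $|\lambda|\to0$, since $c>0$ and $t$ is bounded.

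Finally I apply Rouché. Pick $r$ with $\beta'<r<\beta-\ppi c$; this is possible since $\beta'<\beta/2<\beta-\ppi c$. Choose $\delta$ so small that $\mathrm{Re}\,\zeta^*+r<\log R$ for all $0<|\lambda|<\delta$. Then $\overline{D(\zeta^*,r)}\subseteq\Sigma_{\beta,\log R}$, because $|\mathrm{Im}\,\zeta|\leqslant\ppi c+r<\beta$ there. On $|\zeta-\zeta^*|=r$ we have $|\overline{w}L(\ee^\zeta)|\leqslant\beta'<r=|\zeta-\zeta^*|$. By Rouché's theorem, $\zeta-\zeta^*+\overline{w}L(\ee^\zeta)$ has a zero $\zeta_0\in D(\zeta^*,r)$. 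Then $\xi_0=\ee^{\zeta_0}\in S_{\beta,R}$ and $G(\xi_0)=\lambda$, which gives $D(0,\delta)\backslash\{0\}\subseteq G(S_{\beta,R})$.

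The main obstacle is keeping both constraints at once: $|\mathrm{Im}\,\zeta^*|$ must be small (via the choice of $k$) while the Rouché disk still fits inside the strip. This is exactly where the two hypotheses $2\ppi c<\beta$ and $\beta'<\beta/2$ (the bound on $R$) are used. The remaining care is checking that principal branches are consistent, which holds because $|\mathrm{Im}\,\zeta|<\beta<\ppi$.
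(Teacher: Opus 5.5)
Your proof is correct and follows essentially the same route as the paper: factor out $(1+\ee^{2\ii\alpha})$, pass to logarithmic coordinates, apply Rouch\'e with the perturbation $\log\bigl(1-\frac{\ee^{2\ii\alpha}}{1+\ee^{2\ii\alpha}}\xi\bigr)$ bounded by $-\log\bigl(1-R/|1+\ee^{2\ii\alpha}|\bigr)<\beta/2$, and use $2\pi\cos(2\alpha)<\beta$ to absorb the $2\pi\ii\ZZ$ ambiguity of the exponential. The difference is only bookkeeping: the paper first shows, via Rouch\'e on rectangles in logarithmic coordinates, that $L(S_{\beta,R})$ contains the whole rotated half-strip $\ee^{2\ii\alpha}\Sigma_{\beta/2,\log R-\beta/2}$, and then notes that its vertical slices have length $\beta/\cos(2\alpha)>2\pi$; you instead work target-by-target, choosing the lift $k$ explicitly and applying Rouch\'e on a disk, which has the small advantage of giving an explicit $\delta$.
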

	
	\begin{proof}
		We first rewrite $G$ as
		\begin{align*}
			G (\xi) = (1 + \ee^ {2 \ii \alpha}) \xi^ {\ee^ {2 \ii \alpha}} \bigg( 1 - \frac{\ee^ {2 \ii \alpha}}{1 + \ee^ {2 \ii \alpha}} \xi \bigg) =: (1 + \ee^ {2 \ii \alpha}) H (\xi).
		\end{align*}
		It suffices to show that
		\begin{align*}
			D (0, \delta) \backslash \{ 0 \} \subseteq H (S_{\beta, R})
		\end{align*}
		for some $\delta > 0$. Let
		\begin{align*}
			L (\xi) = \ee^ {2 \ii \alpha} \log{\xi} + \log{\bigg( 1 - \frac{\ee^ {2 \ii \alpha}}{1 + \ee^ {2 \ii \alpha}} \xi \bigg)}, \qquad \xi \in S_{\beta, R}.
		\end{align*}
		Since
		\begin{align*}
			\left| \frac{\ee^ {2 \ii \alpha}}{1 + \ee^ {2 \ii \alpha}} \xi \right| & = \frac{|\xi|}{|1 + \ee^ {2 \ii \alpha}|} < \frac{R}{|1 + \ee^ {2 \ii \alpha}|} < 1 - \ee^ {-\frac{\beta}{2}} < 1, \qquad \xi \in S_{\beta, R},
		\end{align*}
		$L$ is well-defined on $S_{\beta, R}$, and hence $H = \ee^ L$ on $S_{\beta, R}$.
		
		First, note that $\ee^ {2 \ii \alpha} \log{\xi}$ maps $S_{\beta, R}$ onto $\Sigma := \ee^ {2 \ii \alpha} \Sigma_{\beta, \log{R}}$. We next show that, in fact, $L$ maps $S_{\beta, R}$ onto an open set containing $$\Sigma' := \ee^ {2 \ii \alpha} \Sigma_{\frac{\beta}{2}, - \frac{\beta}{2}+\log{R} }.$$ For each $\lambda \in \Sigma'$, there exists a rectangle
		\begin{align*}
			\mathcal{R} := \ee^ {2 \ii \alpha} \left\{ x + \ii y : -\beta < y < \beta \ \textrm{ and } \ M < x < \log{R} \right\} \qquad (-\infty < M < \log{R})
		\end{align*}
		such that
		\begin{align}
			\dist{(\lambda, \partial \mathcal{R})} \geqslant \frac{\beta}{2}. \label{ineq1}
		\end{align}
		If we let
		\begin{align*}
			\mathcal{S} = \left\{ r \ee^ {\ii \theta} : -\beta < \theta < \beta \ \textrm{ and } \ \ee^ M < r < R \right\},
		\end{align*}
		(\ref{ineq1}) gives that
		\begin{align*}
			\left| \ee^ {2 \ii \alpha} \log{\xi} - \lambda \right| \geqslant \frac{\beta}{2}, \qquad \xi \in \partial \mathcal{S}.
		\end{align*}
		On the other hand, for $\xi \in \partial \mathcal{S} \subseteq \overline{S_{\beta, R}}$ we also have
		\begin{align*}
			\left| \log{\bigg( 1 - \frac{\ee^ {2 \ii \alpha}}{1 + \ee^ {2 \ii \alpha}} \xi \bigg)} \right| & \leqslant \sum_{n = 1}^ {\infty} {\frac{1}{n} \left| \frac{\xi}{1 + \ee^ {2 \ii \alpha}} \right|^ n} = -\log{\bigg( 1 - \frac{|\xi|}{|1 + \ee^ {2 \ii \alpha}|} \bigg)} < \frac{\beta}{2},
		\end{align*}
		where the last inequality is due to the assumption that $$|\xi| \leqslant R < |1 + \ee^ {2 \ii \alpha}| (1 - \ee^ {-\frac{\beta} {2}}).$$  Therefore, we deduce that
		\begin{align*}
			\left| \log{\bigg( 1 - \frac{\ee^ {2 \ii \alpha}}{1 + \ee^ {2 \ii \alpha}} \xi \bigg)} \right| < \left| \ee^ {2 \ii \alpha} \log{\xi} - \lambda \right|, \qquad \xi \in \partial \mathcal{S}.
		\end{align*}
		It follows by Rouch\'e's theorem that $L-\lambda$ has a zero in $\mathcal{S}$, since $\ee^ {2 \ii \alpha} \log{\xi} - \lambda$ has a zero in $\mathcal{S}$. Because here $\lambda$ is an arbitrary point in $\Sigma'$ and $\mathcal{S}$ is always contained in $S_{\beta, R}$, we find that $L (S_{\beta, R})$ contains $\Sigma'$.
		
		For all sufficiently negative real numbers $c$, basic geometry tells us that the intersection of the line $\re{z} = c$ and $\Sigma'$ is a vertical interval whose length is $\frac{\beta}{\cos{(2 \alpha)}} > 2 \ppi$, and thus the exponential function maps this interval onto the circle $|z| = \ee^ c$. As a result, $H = \ee^ L$ maps $S_{\beta, R}$ onto an open set which contains $D (0, \delta) \backslash \{ 0 \}$ for some $\delta > 0$. The proof of the lemma is complete.
	\end{proof}

	Henceforth, we fix an $\alpha \in (0, \frac{\pi}{4})$ satisfying $0 < \cos(2 \alpha) < \frac{1}{4}$. It then follows from Lemma \ref{lemma: property of F} that there exist a $z_0 \in \DD$ and an $m_0 \in \NN$ such that
	\begin{align*}
		F (z_0) = \frac{z_0^ 2 s' (z_0)}{[s (z_0)]^ 2} = -\frac{1}{m_0 + 1}.
	\end{align*}
	Let $\rho$ be a number such that $|z_0| < \rho < 1$, and let
	\begin{align*}
		g (z) = \frac{s (\rho z)}{\rho}, \qquad z \in \rho^ {-1} \DD.
	\end{align*}
	Some elementary properties of $g$ are collected in the following lemma.
	
	\begin{lem} \label{lemma: properties of g}
		For the function $g$ defined above, the following are true:
		\begin{enumerate}[label = \textup{(\alph*)}]
			\item $g$ is analytic and univalent on $\overline{\DD}$.
			\item $g(0) = 0$ and $g' (0) = 1$.
			\item Let $w_0 = \rho^ {-1} z_0 \in \DD$. Then
			\begin{align*}
				\frac{w_0^ 2 g' (w_0)}{[g (w_0)]^ 2} = -\frac{1}{m_0 + 1}.
			\end{align*}
		\end{enumerate}
	\end{lem}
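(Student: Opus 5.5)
All three parts follow directly from the definition $g(z) = s(\rho z)/\rho$, the known properties of the $\alpha$-spirallike function $s$, and the choice $|z_0| < \rho < 1$. I will verify them in order.

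For (a), recall that $s$ is analytic on $\DD$. The principal branch $\log(1 - z)$ is analytic wherever $1 - z \notin (-\infty, 0]$, and this holds for all $|z| < 1$. Hence $g(z) = s(\rho z)/\rho$ is analytic on the disk $\rho^{-1}\DD$. Since $\rho^{-1} > 1$, this open disk contains $\overline{\DD}$, so $g \in \Hol(\overline{\DD})$ in the sense of Section 2.2. For univalence, take $w_1, w_2 \in \overline{\DD}$ with $g(w_1) = g(w_2)$. Then $s(\rho w_1) = s(\rho w_2)$, where $\rho w_1, \rho w_2 \in \rho\overline{\DD} \subseteq \DD$. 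Since $s$ is univalent on $\DD$ by \cite[Section 2.7]{Dur}, we get $\rho w_1 = \rho w_2$, hence $w_1 = w_2$. So $g$ is univalent on $\rho^{-1}\DD$, and in particular on $\overline{\DD}$.

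For (b), first $g(0) = s(0)/\rho = 0$ because $s(0) = 0$. Next, by the chain rule $g'(z) = s'(\rho z)$, so $g'(0) = s'(0)$. From the displayed formula for $s'$, we have $s'(0) = 1^{-2\ee^{\ii\alpha}\cos\alpha}\cdot(1 + 0) = 1$, since $\log 1 = 0$.

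For (c), set $w_0 = \rho^{-1} z_0$. Because $|z_0| < \rho$, we have $|w_0| < 1$, so $w_0 \in \DD$. We also need $g(w_0) \neq 0$ for the quotient to make sense. Indeed, $s(z_0) \neq 0$: otherwise $F(z_0) = z_0^2 s'(z_0)/[s(z_0)]^2$ would not be defined. Alternatively, $s$ vanishes only at $0$ by univalence, and $z_0 \neq 0$ because $F(0) = 1 \neq -\frac{1}{m_0 + 1}$. Now substitute $g(w_0) = s(z_0)/\rho$ and $g'(w_0) = s'(z_0)$:
\begin{align*}
	\frac{w_0^2 g'(w_0)}{[g(w_0)]^2} = \frac{\rho^{-2} z_0^2\, s'(z_0)}{\rho^{-2} [s(z_0)]^2} = F(z_0) = -\frac{1}{m_0 + 1}.
\end{align*}

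There is no substantial obstacle. The only points needing care are two small checks. First, the enlarged disk $\rho^{-1}\DD$ strictly contains $\overline{\DD}$, which makes $g$ analytic on a neighbourhood of $\overline{\DD}$. Second, $s(z_0) \neq 0$, so the quotient in (c) is well defined.
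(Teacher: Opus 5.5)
Your proposal is correct and follows the paper's route: the paper simply states that all three assertions follow directly from the definition $g(z)=s(\rho z)/\rho$, and you have written out those routine checks (analyticity and univalence on $\rho^{-1}\DD\supseteq\overline{\DD}$, the chain rule $g'(z)=s'(\rho z)$, and the invariance of $z^2 s'(z)/[s(z)]^2$ under the rescaling). One small remark: your first reason for $s(z_0)\neq 0$ is not quite right, because $F$ is defined by the simplified formula on all of $\DD$ (with $F(0)=1$); your second reason --- $s$ vanishes only at $0$, and $z_0\neq 0$ because $F(0)=1\neq -\frac{1}{m_0+1}$ --- is valid and suffices.
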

	
	\begin{proof}
		The above  three assertions follow directly from the definition of $g$.
	\end{proof}

	Now, we let
	\begin{align*}
		p (z) = \frac{1}{g(z)} - \frac{1}{z} = \frac{z - g(z)}{z g(z)}.
	\end{align*}
	By (a) and (b) of Lemma \ref{lemma: properties of g}, we can write $g(z) = z h(z)$, where $h$ is analytic on $\overline{\DD}$ with $h(0) = 1$. Thus, we have
	\begin{align*}
		p(z) = \frac{1 - h(z)}{z} \frac{1}{h(z)}.
	\end{align*}
	Using (a) of Lemma \ref{lemma: properties of g} again, $0$ is the only zero of $g$ on $\overline{\DD}$, implying that $h$ is zero-free on $\overline{\DD}$. Therefore, $p$ is analytic on $\overline{\DD}$. Lastly, we let
	\begin{align}
		\varphi (z) = \overline{z} + p(z) \label{eq2}
	\end{align}
	and $\lambda_0 = \widetilde{\varphi} (w_0) = \widetilde{\varphi} (\rho^ {-1} z_0)$.

	\begin{lem} \label{lemma: no hypercyclic scalars}
		$\lambda_0$ is an isolated point of the spectrum of the Bergman--Toeplitz operator $T_{\varphi}$. As a consequence of Theorem \ref{theorem: necessary condition for hypercyclicity}, $T_{k \varphi} = k T_{\varphi}$ is non-hypercyclic for every $k \in \CC$.
	\end{lem}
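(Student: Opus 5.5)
The plan is to show that $\lambda_0 \in \Lambda_{\mathrm{d}}(T_{\varphi})$ and then apply Lemma \ref{lemma: Lambdad is discrete}. Here $a = 1$, and
\begin{align*}
\widetilde{\varphi}(z) = \frac{1}{z} + p(z) = \frac{1}{g(z)}.
\end{align*}
By (a) of Lemma \ref{lemma: properties of g}, $g$ is univalent on $\overline{\DD}$ and vanishes only at $0$. Hence $\widetilde{\varphi} = 1/g$ is univalent on $\overline{\DD}_0$.

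First I check that $\lambda_0 \notin \varphi(\TT)$ and that its winding number vanishes. Since $w_0 = \rho^{-1} z_0 \in \DD$, and $w_0 \neq 0$ because $g(w_0) \neq 0$ is needed for (c) of Lemma \ref{lemma: properties of g} to make sense, we have $w_0 \in \DD_0$. Univalence of $\widetilde{\varphi}$ on $\overline{\DD}_0$ then gives
\begin{align*}
\lambda_0 = \widetilde{\varphi}(w_0) \notin \widetilde{\varphi}(\TT) = \varphi(\TT).
\end{align*}
By (a) of Lemma \ref{lemma: univalent}, $\lambda_0 \in \widetilde{\varphi}(\DD_0) = \Omega_\infty$, so $\wind(\varphi(\TT), \lambda_0) = 0$.

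Second I verify the eigenvalue criterion in (c) of Theorem \ref{theorem: spectrum and essential spectrum}. By univalence, $w_0$ is the unique zero of $\widetilde{\varphi} - \lambda_0$ in $\DD_0$. It is simple, since
\begin{align*}
(\widetilde{\varphi})'(w_0) = -\frac{g'(w_0)}{g(w_0)^2} \neq 0
\end{align*}
by univalence of $g$. Differentiating $p = 1/g - 1/z$ gives
\begin{align*}
z^2 p'(z) = 1 - \frac{z^2 g'(z)}{g(z)^2}.
\end{align*}
By (c) of Lemma \ref{lemma: properties of g}, at $z = w_0$ this equals $1 + \frac{1}{m_0+1} = \frac{m_0+2}{m_0+1}$. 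So $\lambda_0$ is an eigenvalue, hence $\lambda_0 \in \Lambda_{\mathrm{d}}(T_{\varphi})$. Lemma \ref{lemma: Lambdad is discrete} then shows that $\lambda_0$ is an isolated point of $\sigma(T_{\varphi})$.

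For the consequence, the case $k = 0$ is trivial, since $0$ is not hypercyclic. For $k \neq 0$ we have $\sigma(kT_{\varphi}) = k\,\sigma(T_{\varphi})$. This set contains the isolated point $k\lambda_0$ together with the nonempty set $k\varphi(\TT) = k\,\sigma_{\mathrm{e}}(T_{\varphi})$, which does not contain $k\lambda_0$. Hence $\sigma(T_{k\varphi})$ is disconnected. (Alternatively, one can note that $k\varphi = k\overline{z} + kp$ is again of the required form with $a = k$ and apply Lemma \ref{lemma: connectedness} directly.) By (c) of Theorem \ref{theorem: necessary condition for hypercyclicity}, $T_{k\varphi}$ is not hypercyclic.

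The only delicate point is the bookkeeping in the second step: confirming that $w_0 \in \DD_0$ and that the root is simple. Both follow from the univalence of $g$ on $\overline{\DD}$, so I expect no real obstacle.
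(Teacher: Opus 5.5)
Your proposal is correct and follows essentially the same route as the paper. You use the univalence of $\widetilde{\varphi}=1/g$ on $\overline{\DD}_0$, the identity $w_0^2p'(w_0)=\frac{m_0+2}{m_0+1}$ and Lemma~\ref{lemma: univalent} to place $\lambda_0$ in $\Lambda_{\mathrm{d}}(T_{\varphi})$, and then invoke Lemma~\ref{lemma: Lambdad is discrete}. Your extra checks fill in details the paper leaves implicit: that $w_0\neq 0$ (more cleanly seen from $F(0)=1\neq -\frac{1}{m_0+1}$), that the root is simple, and the explicit disconnectedness argument for $kT_{\varphi}$ with the case $k=0$ handled separately.
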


	\begin{proof}
		According to Lemma \ref{lemma: properties of g},
		\begin{align*}
			\widetilde{\varphi} (z) = \frac{1}{z} + p (z) = \frac{1}{g (z)}
		\end{align*}
		is univalent on $\overline{\DD}_0$. The univalence of $\widetilde{\varphi}$ on $\overline{\DD}_0$ implies that $z = w_0$ is the only root of the equation
		\begin{align*}
			\widetilde{\varphi} (z) = \lambda_0.
		\end{align*}
		Since
		\begin{align*}
			w_0^ 2 p' (w_0) = 1 - \frac{w_0^ 2 g' (w_0)}{g (w_0)^ 2} = \frac{m_0 + 2}{m_0 + 1},
		\end{align*}
		we conclude from Theorem \ref{theorem: spectrum and essential spectrum} that $\lambda_0$ is an eigenvalue of $T_{\varphi}$. According to Lemma \ref{lemma: univalent}, $\lambda_0 \in \widetilde{\varphi} (\DD_0)$ is in the exterior of $\widetilde{\varphi} (\TT)$, and hence
		\begin{align*}
			\lambda_0 \in \Lambda_{\mathrm{d}} (T_{\varphi}) =\big \{ \lambda \notin \varphi (\TT) : \wind{(\varphi (\TT), \lambda)} = 0 \ \textrm{ and } \ \ker{(T_{\varphi} - \lambda I)} \neq \{ 0  \} \big\},
		\end{align*}
		which shows that $\lambda_0$ is an isolated point of $\sigma (T_{\varphi})$ by Lemma \ref{lemma: Lambdad is discrete}.
	\end{proof}

	However, our next lemma shows that for the symbol $\varphi$ given above, there always exists a $k_0 \in \CC$ such that $\mathbf{T}_{k_0 \varphi} = k_0 \mathbf{T}_{\varphi}$ is hypercyclic.
	
	\begin{lem} \label{lemma: hypercyclic scalar}
		Let $a \in \CC \backslash \{ 0 \}$, $q \in \Hol (\DD) \cap C (\overline{\DD})$, and $\psi (z) = \frac{a}{z} + q (z)$. Suppose that $\psi$ is univalent on $\overline{\DD}_0$. Then there exists a complex number $k_0$ such that $\mathbf{T}_{k_0 \psi}$ is hypercyclic.
	\end{lem}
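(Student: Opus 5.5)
The plan is to apply part (b) of Theorem \ref{theorem: hypercyclic Hardy-Toeplitz operators} to the symbol $k_0 \Psi$, where $\Psi(z) = a\overline{z} + q(z)$ is the harmonic function whose analytic extension is $\widetilde{\Psi} = \psi$. For $k \neq 0$, the function $k\psi = k a z^{-1} + k q(z)$ is univalent on $\overline{\DD}_0$ exactly when $\psi$ is. So condition (1$'$) holds for every nonzero $k$. (Here $k_0 a \neq 0$ and $k_0 q$ is continuous on $\overline{\DD}$ and analytic in $\DD$, so the hypotheses of part (b) are met.) It therefore remains to choose $k_0$ so that condition (2$'$) holds:
\begin{align*}
\DD \cap [\CC \backslash k_0\psi(\DD_0)] \neq \varnothing \qquad \textrm{and} \qquad \widehat{\DD} \cap [\CC \backslash k_0\psi(\DD_0)] \neq \varnothing .
\end{align*}

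To understand $\CC \backslash \psi(\DD_0)$, I would repeat the argument of Lemma \ref{lemma: univalent}(a), which only uses univalence on $\DD_0$ together with continuity up to $\TT$ and the argument principle. It shows that $\psi(\DD_0)$ is the unbounded component $\Omega_\infty$ of $\CC \backslash \psi(\TT)$. Since $\psi$ is univalent on $\overline{\DD}_0$, the curve $\psi(\TT)$ is a Jordan curve. Hence $K := \CC \backslash \psi(\DD_0)$ is the closed Jordan domain bounded by $\psi(\TT)$, which is compact and has nonempty interior $\Omega_0$.

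Now pick a point $w_1 \in \Omega_0$ with $w_1 \neq 0$; such a point exists because $\Omega_0$ is open and nonempty. Also let $M = \max_{w \in K} |w| > 0$. Then $k_0 K = \CC \backslash k_0\psi(\DD_0)$, and I want $k_0 K$ to contain a point of modulus less than $1$ and a point of modulus greater than $1$. Choose $k_0$ with
\begin{align*}
|k_0| |w_1| < 1 < |k_0| M .
\end{align*}
This is possible provided $|w_1| < M$. That inequality can be arranged: since $\Omega_0$ is open, $|w_1|$ is never the maximum of $|\cdot|$ on $K$, and in any case we can take $w_1$ in $\Omega_0$ with $|w_1| < M$. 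For instance, fix any $w \in \Omega_0 \setminus\{0\}$; an interior point cannot attain the maximum modulus of the compact set $K$, so $|w| < M$. Then $k_0 w_1 \in \DD \cap k_0 K$, and some point of $k_0 K$ of modulus $|k_0|M > 1$ lies in $\widehat{\DD}$. Part (b) of Theorem \ref{theorem: hypercyclic Hardy-Toeplitz operators} then gives that $\mathbf{T}_{k_0\Psi}$ is hypercyclic.

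The only step needing care is the topological identification of $\psi(\DD_0)$ with the exterior of the Jordan curve $\psi(\TT)$, which is needed to get nonempty interior. Lemma \ref{lemma: univalent} is stated for symbols that are analytic on $\overline{\DD}$, but its proof uses only continuity on $\overline{\DD}$ and univalence, so it transfers to this setting.
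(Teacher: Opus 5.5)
Your proof is correct and follows essentially the same route as the paper. You apply part (b) of Theorem \ref{theorem: hypercyclic Hardy-Toeplitz operators} to the rescaled symbol, use Lemma \ref{lemma: univalent} (whose proof indeed needs only continuity up to $\TT$) to identify $\CC\backslash\psi(\DD_0)$ with the closed Jordan domain $\psi(\TT)\sqcup\Omega_0$, and then rescale. The only cosmetic difference is the choice of $k_0$: the paper takes $k_0$ so that the open set $k_0\Omega_0$ meets $\TT$, whereas you place a scaled interior point in $\DD$ and a scaled point of maximal modulus in $\widehat{\DD}$.
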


	\begin{proof}
		Let $\Omega_0$ and $\Omega_{\infty}$ be the interior and exterior of $\psi (\TT)$, respectively. According to Lemma \ref{lemma: univalent}, $\Omega_{\infty} = \widetilde{\psi} (\DD_0)$. Let $k_0$ be a complex number such that the open set $k_0 \Omega_0$ intersects $\TT$. As a consequence,
		\begin{align*}
			\DD \cap (k_0 \Omega_0) \neq \varnothing \qquad \textrm{and} \qquad \widehat{\DD} \cap (k_0 \Omega_0) \neq \varnothing.
		\end{align*}
		Observe that $k_0 \Omega_0$ and $k_0 \Omega_{\infty}$ are the interior and exterior of $(k_0 \psi) (\TT)$, and thus by Lemma \ref{lemma: univalent} again, we have
		\begin{align*}
			\CC \backslash (k_0 \psi) (\DD_0) = \CC \backslash (k_0 \Omega_{\infty}) = (k_0 \Omega_0) \sqcup (k_0 \psi) (\TT) \supseteq k_0 \Omega_0.
		\end{align*}
		As a result,
		\begin{align*}
			\DD \cap [\CC \backslash (k_0 \psi) (\DD_0)] \neq \varnothing \qquad \textrm{and} \qquad \widehat{\DD} \cap [\CC \backslash (k_0 \psi) (\DD_0)] \neq \varnothing.
		\end{align*}
		We deduce by Theorem \ref{theorem: hypercyclic Hardy-Toeplitz operators} that $\mathbf{T}_{k_0 \psi}$ is hypercyclic.
	\end{proof}

Now we are ready to prove the main result of this section.

	\begin{proof}[Proof of Theorem \ref{theorem: an example}]
		We still let $\varphi$ be the function defined in (\ref{eq2}). Let $k_0$ be a complex number such that $\mathbf{T}_{k_0 \varphi}$ is hypercyclic, whose existence is guaranteed by Lemma \ref{lemma: hypercyclic scalar}. On the other hand, $T_{k_0 \varphi}$ is non-hypercyclic by Lemma \ref{lemma: no hypercyclic scalars}. We can thus take $\Phi = k_0 \varphi$, to finish the proof.
\end{proof}

	However, it is easy to construct a Toeplitz operator that is hypercyclic on the Bergman space but not hypercyclic on the Hardy space.
	
	\begin{prop}\label{NHHB}
		Let $$\Psi (z) = \overline{z} (2 - |z|^ 2), \ \ \ \  z\in \overline{\mathbb D}.$$ Then the Hardy--Toeplitz operator $\mathbf{T}_{\Psi}$ is non-hypercyclic while the Bergman--Toeplitz operator $T_{\Psi}$ is hypercyclic.
	\end{prop}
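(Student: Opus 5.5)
The plan is to treat the two spaces separately. On the Hardy space only the boundary values of the symbol matter, and there the operator reduces to the backward shift. On the Bergman space $T_{\Psi}$ will turn out to be a backward weighted shift in the basis $\{e_j\}$, so Salas' characterization of hypercyclic weighted shifts \cite{Sal1} applies.

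\emph{Hardy space.} For $z \in \TT$ we have $|z|=1$, so $\Psi|_{\TT} = \overline{z}$ and $\mathbf{T}_{\Psi} = \mathbf{T}_{\overline{z}}$, the unilateral backward shift. By (\ref{ineq3}), $\|\mathbf{T}_{\Psi}\| \leqslant 1$. The orbit of every vector under a contraction is bounded, so it cannot be dense, and $\mathbf{T}_{\Psi}$ is non-hypercyclic.

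\emph{Bergman space.} Write $\Psi = 2\overline{z} - z\overline{z}^2$, so that $T_{\Psi} = 2T_{\overline{z}} - T_{z\overline{z}^2}$. The same computation as in Lemma~\ref{lemma: Toeplitz opeartors with monomial symbols}, done with the Bergman projection (\ref{eq10}), gives for $n \geqslant m$
\begin{align*}
P(z^n\overline{z}^m) = \frac{n-m+1}{n+1} z^{n-m},
\end{align*}
and the projection is $0$ when $n<m$. Applying this to $z^j$, I expect
\begin{align*}
T_{\Psi} z^j = \Big(\frac{2j}{j+1} - \frac{j}{j+2}\Big) z^{j-1} = \frac{j(j+3)}{(j+1)(j+2)} z^{j-1}.
\end{align*}
In the orthonormal basis this reads $T_{\Psi} e_0 = 0$ and $T_{\Psi} e_j = w_j e_{j-1}$ for $j \geqslant 1$, with
\begin{align*}
w_j = \sqrt{\frac{j}{j+1}}\,\frac{j+3}{j+2} > 0.
\end{align*}

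By Salas' theorem, a backward weighted shift with positive weights is hypercyclic if and only if $\sup_n \prod_{j=1}^{n} w_j = \infty$. Here both factors telescope:
\begin{align*}
\prod_{j=1}^{n} w_j = \frac{1}{\sqrt{n+1}}\cdot\frac{n+3}{3},
\end{align*}
which grows like $\sqrt{n}/3$ and hence tends to $\infty$. Therefore $T_{\Psi}$ is hypercyclic on $L_a^2$; in fact it is mixing, since the products themselves tend to infinity.

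The only step that needs care is the exact computation of the weights, in particular the term $T_{z\overline{z}^2}$. It must be checked that the combination $2j/(j+1) - j/(j+2)$ exceeds $1$ by enough to give divergent products. Beyond that, the argument is routine.
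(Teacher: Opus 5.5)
Your proof is correct and follows the paper's argument essentially verbatim. The Hardy--Toeplitz operator reduces to the contraction $\mathbf{T}_{\overline{z}}$, and on $L_a^2$ you get the same weights $w_j = \sqrt{\frac{j}{j+1}}\,\frac{j+3}{j+2}$ with products $\frac{n+3}{3\sqrt{n+1}} \to \infty$, so Salas' criterion applies; your explicit split $T_{\Psi} = 2T_{\overline{z}} - T_{z\overline{z}^2}$ simply fills in the computation the paper leaves to the reader.
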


	\begin{proof}
		For $z \in \TT$, we have $\Psi (z) = \overline{z}$. Thus, $\mathbf{T}_{\Psi}$ is exactly the unweighted backward shift operator $\mathbf{T}_{\overline{z}}$. Since $\mathbf{T}_{\overline{z}}$ is a contraction, it is non-hypercyclic on the Hardy space $H^2$.

Let $e_j=\sqrt{j+1}z^j\ (j\in \mathbb N)$ be the orthonormal basis for the Bergman space $L_a^2$. Then we can calculate as in the proof of Lemma \ref{lemma: Toeplitz opeartors with monomial symbols} to obtain that
		\begin{align*}
			T_{\Psi} e_j (z) =
			\begin{cases}
				0, & j = 0, \vspace{2mm}\\
				\sqrt{\frac{j}{j + 1}} \frac{j + 3}{j + 2} e_{j - 1}, & j \in \NN_+.
			\end{cases}
		\end{align*}
		Note that $T_{\Psi}$ is a weighted backward shift whose weight sequence $\{ w_j \}_{j = 1}^ {\infty}$ is given by
		\begin{align*}
			w_j = \sqrt{\frac{j}{j + 1}} \frac{j + 3}{j + 2}, \qquad j \in \NN_+.
		\end{align*}
		Since
		\begin{align*}
			p_n := w_1 w_2 \cdots w_n = \frac{n + 3}{3 \sqrt{n + 1}}
		\end{align*}
		increases to $+\infty$ as $n \rightarrow \infty$, we conclude by \cite[Theorem 2.8]{Sal1} that $T_{\Psi}$ is hypercyclic on the Bergman space $L_a^2$.
	\end{proof}\vspace{2mm}

\subsection*{Acknowledgment}
The first and third authors were supported by National Natural Science Foundation of China (12371125) Chongqing Natural Science Foundation (CSTB2024NSCQ-MSX0177). The second author was supported by the Natural Science Starting Project of SWPU (2024QHZ026).


\begin{thebibliography}{99}
		\bibitem{Aba} O. Abad, On the dynamics of Toeplitz operators over Bergman spaces, \href{https://arxiv.org/abs/2609.22017}{arXiv:2609.22017}.
		
		\bibitem{ABCL} E. Abakumov, A. Baranov, S. Charpentier, A. Lishanskii, New classes of hypercyclic Toeplitz operators, \textit{Bull. Sci. Math.}, 2021, 168: article number 102971.
		
		\bibitem{BL} A. Baranov and A. Lishanskii, Hypercyclic Toeplitz operators, \textit{Results. Math.}, 2016, 70: 337--347.
		
		\bibitem{BM} F. Bayart, E. Matheron, \textit{Dynamics of Linear Operators}, Cambridge University Press, Cambridge, 2009.
		
		\bibitem{DE} \"O. De\u{g}er, B. B. Eski\c{s}ehirli, Disjoint hypercyclic Toeplitz operators, \textit{Arch. Math.}, 2025, 124: 301--310.
		
		\bibitem{Dou} R. Douglas, \textit{Banach Algebra Techniques in Operator Theory}, second edition, Springer, New York, 1998.
		
		\bibitem{Dur} P. L. Duren, \textit{Univalent Functions}, Springer-Verlag, New York, 1983.
		
		\bibitem{FGO} E. Fricain, S. Grivaux, M. Ostermann, Hypercyclicity of Toeplitz operators with smooth symbols, \href{https://arxiv.org/abs/2502.03303}{arXiv:2502.03303}.
		
		\bibitem{GLS} Z. Guo, L. Lin, Y. Shu, Frequent hypercyclicity and chaoticity of Toeplitz operators and their tensor products, \textit{Proc. Math. Sci.}, 2021, 131: article number 49.
		
		\bibitem{GS} G. Godefroy, J. Shapiro, Operators with dense, invariant cyclic vector manifolds, \textit{J. Funct. Anal.}, 1991, 98: 229--269.
		
		\bibitem{GZZ} K. Guo, X. Zhao, D. Zheng, The spectral picture of Bergman-Toeplitz operators with harmonic polynomial symbols, \textit{Ark. Mat.}, 2023, 61: 343--374.
		
		\bibitem{LZ} Q. Leng, X. Zhao, Hypercyclic Bergman--Toeplitz operators with harmonic polynomial symbols, \textit{Results. Math.}, 2026, 81: article number 167.
		
		\bibitem{Rol} S. Rolewicz, On orbits of elements, \textit{Studia Math.}, 1969, 32: 17--22.
		
		\bibitem{Rud} W. Rudin, \textit{Functional Analysis}, second edition, McGraw-Hill, New York, 1973.
		
		\bibitem{Sal1} H. N. Salas, Hypercyclic weighted shifts, \textit{Trans. Amer. Math. Soc.}, 1995, 347: 993--1004.
		
		\bibitem{Sal2} H. N. Salas, Supercyclicity and weighted shifts, \textit{Studia Math.}, 1999, 135: 55--74.
		
		\bibitem{Shk} S. Shkarin, Orbits of coanalytic Toeplitz operators and weak hypercyclicity, \href{https://arxiv.org/abs/1210.3191}{arXiv:1210.3191}.
		
		
		\bibitem{Zhu} K. Zhu, \textit{Operator Theory in Function Spaces}, second edition, American Mathematical Society, 2007.
	\end{thebibliography}
\end{document}